\def\newthm#1#2{\newtheorem{#1}[dummy]{#2}%
	\expandafter\def\csname#2\endcsname##1{\hyperref[#1:##1]{#2~\ref*{#1:##1}}}}
\patchcmd{\section}{\scshape}{\bfseries}{}{}
\renewcommand{\@secnumfont}{\bfseries}
\newcommand{\Section}[1]{\hyperref[sec:#1]{Section~\ref*{sec:#1}}}
\newcommand{\Table}[1]{\hyperref[tab:#1]{Table~\ref*{tab:#1}}}
\newcommand{\eqn}[1]{\hyperref[eqn:#1]{(\ref*{eqn:#1})}}
\theoremstyle{plain}
\newtheorem{thm}{Theorem}[section]
\newtheorem{lemma}[thm]{Lemma}
\newtheorem{Prop}[thm]{Proposition}
\newtheorem{Cor}[thm]{Corollary}
\theoremstyle{definition}
\newtheorem{remark}[thm]{Remark}
\newtheorem{definition}[thm]{Definition}
\newtheorem{claim}[thm]{Claim}
\newtheorem{assumption}[thm]{Assumption}
\newtheorem*{thm*}{Main Theorem}
\numberwithin{equation}{section}
\newcommand{\sslash}{/\mkern-6mu/}
\renewcommand{\P}{{\mathbb P}}
\newcommand{\C}{{\mathbb C}}
\newcommand{\Q}{{\mathbb Q}}
\newcommand{\Z}{{\mathbb Z}}
\newcommand{\N}{{\mathbb N}}
\renewcommand{\O}{\mathcal{O}}
\newcommand{\ignore}[1]{}
\begin{document}

\title{Wall-crossing for quasimaps to GIT stack bundles}  

\author[Supekar]{Shidhesh Supekar}
\address{Department of Mathematics\\ Ohio State University\\ 100 Math Tower, 231 West 18th Ave. \\ Columbus,  OH 43210\\ USA}
\email{sidsupekar@gmail.com}

\author[Tseng]{Hsian-Hua Tseng}
\address{Department of Mathematics\\ Ohio State University\\ 100 Math Tower, 231 West 18th Ave. \\ Columbus,  OH 43210\\ USA}
\email{hhtseng@math.ohio-state.edu}

\begin{abstract}
We define the notion of $\epsilon$-stable quasimaps to a GIT stack bundle, and study the wall-crossing behavior of the resulting $\epsilon$-quasimap theory as $\epsilon$ varies.
\end{abstract}

\date{\today}

\subjclass{14N35, 53D45}

\maketitle

\tableofcontents

\setcounter{section}{-1}

\section{Introduction}
We work over $\C$.

This paper considers GIT stack bundles over a nonsingular projective variety $Y$. Roughly speaking, a GIT stack bundle over $Y$ is a fiber bundle over $Y$,
$$[E^{\text{ss}}/G]\to Y,$$ 
whose fibers are isomorphic to a given GIT stack quotient $[{V}^{ss}(\theta)/G]$. A precise definition of GIT stack bundles is given in equation (\ref{def:GIT_stack_bundle}) below. The notion of GIT stack bundles generalizes the notion of GIT bundles, which was previously studied in \cite{oh_2021} and \cite{CLS}. Toric stack bundles are examples of GIT stack bundles which are previously studied, see \cite{jiang2008} and \cite{JTY}.

A recent advance in Gromov-Witten theory of GIT (stack) quotients is the wall-crossing formula for $\epsilon$-quasimaps, proven in increasing generality in \cite{CKwallcross}, \cite{CK:wall1}, \cite{CJR},  \cite{CK:wall2} and in full generality in \cite{YZ}. For a rational number $\epsilon>0$, the notion of $\epsilon$-quasimaps to a GIT quotient is introduced in \cite{ckm2014stable} (which extends the case of target Grassmannians in \cite{toda_2011}) and extended to GIT stack quotient in \cite{CCK}. $\epsilon$-quasimaps can be used to define Gromov-Witten type theories by integrations against virtual fundamental classes of moduli spaces
$$Q_{g,n}^{\epsilon}([E^{ss}/G],\beta)$$
of stable $\epsilon$-quasimaps. For $\epsilon$ large, $\epsilon$-quasimaps coincide with stable maps and $\epsilon$-quasimap theory recovers Gromov-Witten theory in this case. For $\epsilon>0$ very small, $\epsilon$-quasimaps are simpler and the associated theory can often be explicitly analyzed. Therefore it is reasonable to hope for explicit results in Gromov-Witten theory by studying how $\epsilon$-quasimap theory changes as $\epsilon$ varies.  It turns out that $\epsilon$-quasimap theory only changes when $\epsilon$ passes through certain values, called the walls. The wall-crossing formula expresses such changes explicitly. 

In this paper, we define the notion of $\epsilon$-quasimaps to GIT stack bundles, generalizing the notion of ($0^+$-)quasimaps to GIT bundles defined in \cite{oh_2021}. Using this notion, we construct $\epsilon$-quasimap theory of a GIT stack bundle. We then establish a wall-crossing formula for $\epsilon$-quasimap theory of a GIT stack bundle, closely following the approach of \cite{YZ}. The main result of this paper is 
\begin{thm*}[see Theorem \ref{thm:wallcrossing} for more details\footnote{Note that the integration symbol $\int$ here actually means the pushforward by a non-constant morphism, see (\ref{notation:pushforward}).}]
\begin{equation*}
\begin{split}
	\int_{[Q^{\epsilon_{-}}_{g,n}([E^{ss}/G],\beta)]^{\text{vir}}}&1-\int_{[Q^{\epsilon_{+}}_{g,n}([E^{ss}/G],\beta)]^{\text{vir}}}1\\
	&=\sum_{k\geq 1}\sum_{\overline{\beta}}\dfrac{1}{k!}\int_{[Q^{\epsilon_{+}}_{g,n+k}([E^{ss}/G],\overline{\beta})]^{\text{vir}}}\prod_{i=1}^{k}\mathrm{ev}^*_{n+i}[(z-\psi_{n+i})I_{\tilde{\beta}_i}(z-\psi_{n+i})]_0	
\end{split}
\end{equation*}
where $\overrightarrow{\beta}=(\overline{\beta},\beta_1,\cdots,\beta_k)$ runs through all the $(k+1)$-tuples of effective curve classes such that $\beta=\overline{\beta}+\beta_1+\cdots+\beta_k$ and $\text{deg}(\beta_i)=(0,d_0)$ for $i=1,\cdots,k$.
\end{thm*}

We now discuss some future applications of the Main Theorem. We expect to prove a {\em Givental-style mirror theorem} for GIT stack bundles $[E^{\text{ss}}/G]\to Y$, which means that we expect to construct an explicit object using the Gromov-Witten theory of the base $Y$ and the geometry of the GIT stack bundle $[E^{\text{ss}}/G]\to Y$ and to prove that this object lies on Givental's Lagrangian cone for Gromov-Witten theory of $[E^{\text{ss}}/G]$. The Main Theorem explicitly relates Gromov-Witten theory of $[E^{\text{ss}}/G]$, corresponding to very large $\epsilon$, with $0^+$-quasimap theory of $[E^{\text{ss}}/G]$. It is expected that the geometry of $0^+$-quasimaps in genus $0$ is simpler and explicit calculations in genus $0$ are possible. We expect that this will lead to a desired mirror theorem\footnote{This approach was taken to derive mirror theorems for some GIT stacks in \cite{CCK}.}. We expect that the resulting mirror theorem for GIT stack bundles should recover known mirror theorems for toric bundles \cite{Brown2009GromovWittenIO}, toric stack bundles \cite{JTY}, and flag bundles \cite{oh_2021} (see also \cite{CLS}). For GIT stack bundles in general, with nonabelian group $G$, we plan to handle the calculation in $0^+$-quasimap theory by proving an abelian/nonabelian correspondence\footnote{In another direction, abelian/nonabelian correspondence for genus zero $0^+$-quasimap theory together with the Main Theorem should imply abelian/nonabelian correspondence for genus $0$ Gromov-Witten theory of $[E^{\text{ss}}/G]$.}. For GIT bundles, this is done in \cite{S} by following the approach of \cite{webb2021abeliannonabelian}. For GIT stack bundles, this will be studied in \cite{ST}.

We also expect that the genus $0$ mirror theorem discussed above and its consequences will be very useful in studying Virasoro constraints for GIT stack bundles. Virasoro constraints for toric bundles were studied in \cite{CGT}, where the results of \cite{Brown2009GromovWittenIO} were indispensable.

The Main Theorem in genus $0$ is expected to have another application, as follows. Given a smooth projective variety $X$, a smooth irreducible divisor $D\subset X$, and a natural number $r$, one can construct the stack $X_{D,r}$ of $r$-th roots of $X$ along $D$, see e.g. \cite[Appendix B.2]{AGV} for details. When $D$ is nef, genus $0$ Gromov-Witten invariants of $X_{D,r}$ are calculated in \cite{FTY} assuming knowledge about genus $0$ Gromov-Witten invariants of $X$. One geometric input used in \cite{FTY} is the expression of $X_{D,r}$ as the zero locus of a section of a line bundle on a $\mathbb{P}^1$-{\em stack} bundle over $X$. When $D$ is nef, quantum Lefschetz \cite{CCIT} is applied. When $D$ is not nef, quantum Lefschetz is not applicable. But one can still realize $X_{D,r}$ as a hypersurface in a GIT stack bundle using this expression. We hope to directly analyze the $0^+$-quasimap theory in genus $0$ and apply the Main Theorem to deduce results on Gromov-Witten invariants of $X_{D,r}$ when $D$ is not nef. If successful, we plan to further extend this to the case when the pair $(X,D)$ is simple normal-crossing, along the line of \cite{TY}.

The rest of the paper is organized as follows. In Section \ref{sec:prelim} we present some basic definitions, such as presentations of GIT stack bundles in Section \ref{secTS} and quasimap $I$-functions in Section \ref{IFunc}. In Section \ref{sec:e-quasimap} we discuss the notion of $\epsilon$-quasimaps to a GIT stack bundle and their moduli spaces. In Section \ref{subsec:defn_e-quasimap} we define $\epsilon$-quasimaps. In Section \ref{subsec:proper_e-quasimap} we prove that moduli stacks of $\epsilon$-quasimaps to a GIT stack bundle are proper. In Section \ref{obthr} we construct obstruction theories for moduli stacks of $\epsilon$-quasimaps. In Section \ref{sec2.9} we consider $\epsilon$-quasimaps to a specific GIT stack bundle. In Section \ref{sec:master_space} we consider the master spaces, which extend the objects introduced in \cite{YZ} to GIT stack quotients. After introducing various required notions, the definition of master spaces is given in Section \ref{subsec:defn_master_space}. Properness of master spaces is proven in Section \ref{subsec:proper_master_space}. In Section \ref{sec:localization}, we work out in detail the virtual localization formula for a $\C^*$-action on master spaces. Finally, in Section \ref{sec:wallcrossing}, we establish the wall-crossing formula.

\subsection{Acknowledgment}
We are very grateful to Jeongseok Oh and Yang Zhou for helpful discussions. We are very grateful to the referee for a very careful read and insightful comments. H.-H. T. is supported in part by Simons foundation collaboration grant.

\section{Preliminaries}\label{sec:prelim}
Throughout this paper, we consider the following set-up. Let $G$ be a reductive complex algebraic group. Let $Y$ be a non-singular projective variety and let 
$$\pi:E\rightarrow Y$$
be a fiber bundle over $Y$ with a fiberwise $G$-action such that the fiber $V$ is an affine variety with at worst local complete intersection singularities. Fix a character $\theta:G\rightarrow \C^*$ in $\chi(G)$ and $$L_{\theta}:=E\times \C_{\theta}\in \text{Pic}^G(E),$$ where $\C_{\theta}$ is the one dimensional representation of $G$ determined by character $\theta$. Let $$V^s(G,\theta), V^{ss}(G,\theta)\subset V$$ be the $\theta$-stable and $\theta$-semistable loci respectively.

\subsection{Presentations of GIT stack bundles}\label{secTS}
The notion considered in this Section is adopted from \cite{oh_2021}. We make the following
\begin{assumption}
\hfill
\begin{enumerate}
\item There is a morphism of varieties $\psi:Y\rightarrow \prod_{j=1}^{r}\P^{n_j-1}$ for some $r,n_j\in \Z_{>0}$.
\item There is a $S:=(\C^*)^r$ action on $V$ which commutes with the $G$-action, where $r$ is same as in the assumption above.
\end{enumerate}
\end{assumption}

\begin{definition}
A \textit{presentation} of $(E, G, \theta)$ is the triple $$(\psi:Y\rightarrow \prod_{j=1}^{r}\P^{n_j-1}, (S\times G)-\text{action on } V, m\in \Z_{>0})$$ which satisfies the following conditions:
\begin{enumerate}
	\item[(i)] $E$ is the pullback of a vector bundle $[(\prod_{j=1}^{r}\C^{n_j})\times V/S]$ on $[\prod_{j=1}^{r}\C^{n_j}/S]$. That is, we have a fiber product
	\begin{equation}\label{dia1}
		\begin{tikzcd}
			E \arrow{r} \arrow{d} & {[(\prod_{j=1}^{r}\C^{n_j})\times V/S]} \arrow{d} \\
			Y \arrow{r} & {[\prod_{j=1}^{r}\C^{n_j}/S]}.
		\end{tikzcd}		
	\end{equation}
	Here the bottom map is given by $$Y\xrightarrow{\psi}\prod_{j=1}^{r}\P^{n_j-1}\hookrightarrow \prod_{j=1}^{r}[\C^{n_j}/\C^*]\cong \left[\prod_{j=1}^{r}\C^{n_j}/S\right];$$
	\item[(ii)] The morphism $E\rightarrow [(\prod_{j=1}^{r}\C^{n_j})\times V/S]$ in diagram (\ref{dia1}) is $G$-equivariant;
	\item[(iii)] Let $A$ be the coordinate ring of the affine space $(\prod_{j=1}^{r}\C^{n_j})\times V$. Then, $A^{S\times G}\cong \C$ as $\C$-algebras;
	\item[(iv)] $V^s(G,\theta)=V^{ss}(G,\theta)\neq \emptyset$ and is nonsingular. Moreover, the $G$-action on $V^s(G,\theta)$ has finite stabilizers;
	\item[(v)] Denote $W:=\prod_{j=1}^{r}\C^{n_j}$ and $\tilde{V}:=W\times V$. Then 
	\[
	\tilde{V}^{ss}(S\times G,\tilde{\theta})=W^{ss}(S, \eta)\times V^{ss}(G,\theta).
	\]
	Here $\eta=(1,...,1)\in \Z^r\cong \chi(S)$ and $\tilde{\theta}=m\eta + \theta \in \chi(S)\oplus\chi(G)\cong\chi(S\times G)$.
\end{enumerate}
\end{definition}
From conditions (i) and (ii) above, we have a fiber diagram
\begin{equation}
\begin{tikzcd}\label{cd8}
{[E/G]} \arrow{r}\arrow["\pi"] {d}& {[\tilde{V}/(S\times G)]}\arrow{r}\arrow{d} &{[V/(S\times G)]}\arrow{d}\\
Y \arrow{r} & {[W/S]}\arrow{r}& BS.
\end{tikzcd}
\end{equation}
Here $\tilde{V}$ and $W$ are as defined in condition (v) and $BS:=[\text{Spec}\,\C/S]$ is the classifying space of $S$.

\begin{definition}
    $[E/G]\rightarrow Y$ is a fiber bundle over a base scheme $Y$ with stack quotients $[V/G]$ as its fibers. From diagram (\ref{cd8}) we have
    \[
    [E/G]:=[\tilde{V}/(S\times G)]\times_{[W/S]}Y.
    \]
\end{definition}

From conditions (iii) and (v), the GIT stack quotient of (semi-)stable locus $[\tilde{V}^{ss}(\tilde{\theta})/(S\times G)]$ is a nonsingular, projective and open substack of $[\tilde{V}/(S\times G)]$.

\begin{definition}
We define a {\em GIT stack bundle} over $Y$
\begin{equation}\label{def:GIT_stack_bundle}
    [E^{\text{ss}}/G]:=[\tilde{V}^{ss}(\tilde{\theta})/(S\times G)]\times_{[W^{ss}/S]}Y
\end{equation}
as a fiber bundle over $Y$ with GIT stack quotient $[{V}^{ss}(\theta)/G]$ as its generic fibers.
\end{definition}

From the definition, we have fibered diagram
\[
	\begin{tikzcd}
		{[E^{\text{ss}}/G]} \arrow{r}\arrow["\pi"] {d}& {[\tilde{V}^{ss}(\tilde{\theta})/(S\times G)]} \arrow{d}\\
		Y \arrow{r} & {[W^{ss}(\eta)/S]}.
	\end{tikzcd}
\]
Hence, $[E^{\text{ss}}/G]$ is nonsingular, projective and open substack of $[E/G]$. Moreover, analogous to \cite[Diagram 2.1]{CCK}, we have a diagram of natural morphisms for various GIT fiber bundles
\begin{equation}
    \begin{tikzcd}
 		{[E^{\text{ss}}/G]} \arrow[r,hook]\arrow[d]& {[E/G]}\arrow{d}\\
		{E\sslash_{\theta} G} \arrow{r} & {Y},
    \end{tikzcd}
\end{equation}
where $E\sslash_{\theta}G:=\tilde{V}\sslash_{\tilde{\theta}}(S\times G)\times_{W\sslash_{\eta}S}Y$ is a fiber bundle over $Y$ with the GIT scheme quotient $V\sslash_{\theta}G$ as its generic fibers.

\subsection{Graph Quasimaps and \textit{I}-functions.}\label{IFunc}
We recall some constructions for quasimaps to GIT (stack) quotients, see e.g. \cite{CKwallcross}.

Let $W$ be an affine variety with at worst local complete intersection singularities. Let $G$ be a reductive complex algebraic group acting on $W$ such that the (semi-)stable locus ($W^{\text{ss}}=W^{\text{s}}$) is smooth and non-empty. Denote
	\[
	\mathcal{X}:=[W^{\text{ss}}/G].
	\]
Let $\beta:\text{Pic}([W/G])\rightarrow \Q$ be an effective curve class and fix a $\epsilon\in \Q_{>0}$. 

 A prestable quasimap from a twisted marked curve $(C,x)$ 
\cite[Section 4]{AGV} to $\mathcal{X}$ consists of a tuple $((C,x),u)$ such that $u:C\rightarrow [W/G]$ is a representable morphism with finitely many base points disjoint from nodes and marked points.

	\begin{definition}
	The length $l(p)$ at a point $p$ of a prestable quasimap $((C,x_1,\cdots, x_n), [u])$ to a quotient stack target $\mathcal{X}$ is defined by \[l(x):=\text{min}\left\{\frac{(u^*s)_p}{m}\bigg\rvert \text{  }s\in H^0(W,L_{m\theta})^G,u^*s\neq 0,m>0\right\}.\]
	\end{definition}
	
\begin{remark}
We refer to \cite{ckm2014stable} for the definition of length at a point of a quasimap to a general GIT target. From the definition, $l(x)$ is non-zero if and only if $x$ is a base point. By prestability, these points are away from stacky nodes and markings. Hence, we can use the same notion of stability for orbifold target.
	\end{remark}

 Let $\varphi:(C,x_1,\cdots ,x_n)\rightarrow (\underline{C}, \underline{x}_1,\cdots ,\underline{x}_n)$ be the coarse moduli space for marked twisted curve $(C,x_1,\cdots ,x_n)$.

\begin{definition}\label{def5}
\cite[Section 2.3]{CCK} A prestable quasimap $((C, x_1,\cdots , x_k), [u])$ to a quotient stack $\mathcal{X}$ is said to be $\epsilon$-stable  if the following two conditions hold:
\begin{enumerate}
 \item The $\Q$ line bundle \[\omega_{\underline{C}}\left(\sum_{i=1}^{n}\underline{x_i}\right)\otimes (\varphi_*([u]^*{L}_{\theta}))^{\otimes\epsilon}\] on the coarse curve $\underline{C}$ is ample;
 \item $\epsilon l(x)\leq 1$ for every point $x$ in $C$.
\end{enumerate} 
A moduli of genus-$g$, $\epsilon$-stable quasimaps to $\mathcal{X}$ of curve class $\beta$ with $n$-marked points is denoted by $Q^{\epsilon}_{g,n}(\mathcal{X},\beta)$.
\end{definition}

\begin{definition}
A single marked, genus-$0$ \textit{($\epsilon$-)quasimap graph space} is defined to be a special moduli space of genus-$0$, ($\epsilon$-)stable quasimaps:
	\[
	QG_{0,1}(\mathcal{X},\beta):=Q^{\epsilon}_{0,1}(\mathcal{X}\times\P^1,\beta\times[\P^1]).
	\]
\end{definition}

Thus the underlying curve of an object in this graph space has a unique rational tail with its coarse moduli mapped isomorphically to $\P^1$. Consider a $\C^*$-action on $\P^1$ given  by
	\[
	\begin{tikzcd}
		{\lambda\cdot[x,y]=[\lambda x,y],}&{\lambda\in \C^*.}
	\end{tikzcd}
	\] 
This induces a $\C^*$-action on $QG_{0,1}(\mathcal{X},\beta)$. Let 
$$F_{\star,\beta}\subset QG_{0,1}(\mathcal{X},\beta)^{\C^*}$$ 
be a $\C^*$-fixed component such that the unique marked point is mapped to $\infty\in\P^1$ and $0\in\P^1$ is a base point of length deg$(\beta)$. Let $[F_{\star,\beta}]^{\text{vir}}$ be its virtual fundamental class and let $N^{\text{vir}}_{F_{\star,\beta}/QG_{0,1}(\mathcal{X},\beta)}$ be the virtual normal bundle in the sense of \cite{Graber1997LocalizationOV}. We denote the $\C^*$-equivariant parameter to be $z$ with the Euler class of standard representation as \[e_{\C^*}(\C_{\text{std}})=-z.\]

Let \[
I_\mu(\mathcal{X}) =\coprod_{r}I_{\mu_r}(\mathcal{X})
\]
denote the cyclotomic inertia stack of the stack $\mathcal{X}$ (c.f. \cite[Section 3.1]{AGV}, \cite[Section 1.5]{YZ}). Define
\[
\hat{\text{ev}}:QG_{0,1}(\mathcal{X},\beta)\rightarrow I_{\mu}(\mathcal{X})
\] 
to be the composition of evaluation map at the unique point with the band-inverting involution on $I_{\mu}(\mathcal{X})$.  

\begin{definition}
We define an $I$-function as
\begin{equation}
I(q,z):=1+\sum_{\beta>0}q^{\beta}I_{\beta}(z)
\end{equation}
where the sum is over all effective curve classes $\beta\in \text{Pic}([W/G])^{\vee}$ and 
	\[
	I_{\beta}(z):=(-z\mathfrak{r}^2)(\hat{\text{ev}})_*\left(\dfrac{[F_{\star,\beta}]^{\text{vir}}}{e_{\C^*}(N^{\text{vir}}_{F_{\star,\beta}/QG_{0,1}(\mathcal{X},\beta)})}\right).
	\]
Here $\mathfrak{r}$ is a locally constant function on $I_{\mu}(\mathcal{X})$ that takes the value $r$ on $I_{\mu_r}(\mathcal{X})$.
\end{definition}

\subsection{Inflated Projective bundle}\label{InfProjBun}
We recall the definition of inflated projective bundle and refer the reader to \cite[Appendix A]{YZ} for a more detailed treatment.
 
Let $X$ be any algebraic stack. Let $L_1, \cdots , L_k$ be line bundles on $X$. Consider a projective bundle 
\[
P=\P(L_1\oplus\cdots\oplus L_k)\rightarrow X,
\]
and coordinate hyperplanes
\[
H_i=\P(L_1\oplus\cdots\oplus\{0\}\oplus\cdots\oplus L_k),
\]
where $\{0\}$ is in the $i$-th place. Set $P_{k-1}=P$ and inductively for $i=k-1,\cdots, 1$ define union of codimension-$i$ coordinate subspaces
\[
Z_i=\bigcup (H_{j_1}\cap\cdots\cap H_{j_i}),
\]
where ${j_1,\cdots,j_i}$ runs through all subsets of $\{1,\cdots,k\}$ of size $i$. Let $Z_{(i)}\subset P_i$ be the proper transform of $Z_i$ and let
\[
P_{i-1}=\text{Bl}_{Z_{(i)}}P_i\rightarrow P_i
\]
be the blowup along $Z_{(i)}$. Then $$\mathcal{P}(L_1,\cdots, L_k):=P_0\rightarrow X$$ 
is called \textit{inflated projective bundle} associated to line bundles $L_1,\cdots,L_k$.

\section{\texorpdfstring{$\epsilon$}{e}-Stable Quasimaps}\label{sec:e-quasimap}
Throughout the paper we use twisted curves with balanced nodes and trivialized gerbe markings \cite[Section 4]{AGV}. A marking on a family of curve $\pi:C\rightarrow R$ is a closed substack $\Sigma\subset C$ in the (relative) smooth locus, together with sections $R\rightarrow \Sigma$ of $\pi|_{\Sigma}$, such that $\Sigma\rightarrow R$ is a gerbe banded by some $\mu_r$. 

\subsection{Definitions}\label{subsec:defn_e-quasimap}
\subsubsection{Prestable quasimaps}
Let $(E, G, \theta)$ be as in Section \ref{secTS}, with \[(\psi:Y\rightarrow \prod_{j=1}^{r}\P^{n_j-1}, (S\times G)-\text{action on }V, m\in \Z_{>0})\] as its presentation. Choose a class \[\beta:=(\beta',\tilde{\beta})\in \text{Ker}(\text{Pic}(Y)^{\vee}\oplus\text{Pic}^{S\times G}(\tilde{V})^{\vee}\rightarrow \chi(S)^{\vee}).\]
The map $\text{Pic}(Y)^{\vee}\oplus\text{Pic}^{S\times G}(\tilde{V})^{\vee}\rightarrow \chi(S)^{\vee}$ is the one induced from diagram (\ref{cd8}).

\begin{definition}\label{def2.1}
We define a \textit{prestable morphism} $C\rightarrow [E/G]$ of type $(g,n,\beta)$ to be a pair of morphisms
\[
(f:(C,x_1,\cdots, x_n)\rightarrow Y, u:C\rightarrow [\tilde{V}/(S\times G)])
\]
such that,
\begin{itemize}
\item $(C,x_1,\cdots, x_n)$ is a genus $g$ twisted curve with $n$ markings;
    
\item $f$ is a morphism of degree $\beta'$;
	
\item $u$ is a representable morphism of stacks. Moreover, the group homomorphism $$\tilde{\beta}:\text{Pic}([\tilde{V}/(S\times G)])\rightarrow \Q, \quad \tilde{\beta}(L):=\text{deg}(u^*(L))$$ is the class of map $u$;
	
\item $Q_f\cong Q_u$ as principal $S$-bundles over $C_0$.
\end{itemize}
\end{definition}

To define $C_0,Q_f$ and $Q_u$, we first note that by \cite[Proposition 9.1.1]{AV}, $u:C\rightarrow [\tilde{V}/(S\times G)]$ factors as
\begin{equation}\label{eq3}
	C\xrightarrow{\phi}C_0\xrightarrow{[u]}[\tilde{V}/(S\times G)],
\end{equation}
where $\phi: C\rightarrow C_0$ is a contraction of $u$-constant unmarked rational trees \cite[Lemma 9.2.1]{AV} (connected tree of rational curves attached to rest of the curve at single point) and $[u]$ is a representable morphism of class $\tilde{\beta}$. Note that factorization in \cite[Proposition 9.1.1]{AV} contracts both unstable rational trees and unstable rational bridges. In order to avoid contraction of rational bridges, we stabilize them by adding sections, apply contraction and forget the added sections. Composition of morphisms
\begin{equation}\label{eq4}
	C_0\xrightarrow{[u]} [\tilde{V}/(S\times G)]\rightarrow[W/S].
\end{equation}
provides a principal $S$-bundle on $C_0$ which we denote by $$Q_u.$$ 

We can similarly construct a principal $S$-bundle starting with the map $f$. To see this, consider a composition of morphisms 
\[
{C}\xrightarrow{f} Y\xrightarrow{\psi} \prod_{j=1}^r\P^{n_j-1}\rightarrow\P^{n_{j_0}-1}
\]
for each $1\leq j_0\leq r$. This is equivalent to a surjective morphism of sheaves on ${C}$, $$\O_{{C}}^{\oplus n_{j_0}}\xrightarrow{\varphi_{j_0}}\mathcal{L}_{j_0}\rightarrow 0,$$ 
where $\mathcal{L}_{j_0}$ is the pullback of $\O_{\P^{n_{j_0}-1}}(1)$ on ${C}$. Denote by $T_1,\cdots ,T_l$  the rational tails on each fiber $C$ contracted under $\phi$. Let $t_1,\cdots , t_l$ be the sections of ${C}_0$ corresponding to contraction points of $T_1,\cdots, T_l$ on $C_0$. Consider morphisms of sheaves on ${C}_0$, 
\[
\O_{{C}_0}^{\oplus n_{j_0}}\xrightarrow{\varphi_{j_0}|_{{C}_0}}\mathcal{L}_{j_0}|_{{C}_0}\hookrightarrow \mathcal{L}_{j_0}|_{{C}_0}\otimes \O_{{C}_0}(\Sigma_{i=1}^l\text{deg}(\mathcal{L}_{j_0}|_{T_i})\cdot t_i).
\]
This gives a morphism of stacks 
\[
{C}_0\rightarrow \P^{n_{j_0}-1} \hookrightarrow [\C^{n_{j_0}}/\C^*],
\]
with degree deg$(\mathcal{L}_{j_0})$ (see \cite[Section 3.2.2]{CKwallcross}). So we have a morphism of stacks 
\begin{equation}\label{eq5}
C_0\rightarrow\prod_{j=1}^{r}[\C^{n_j}/\C^*]=[W/S],
\end{equation}
of degree $d':=(\text{deg}(\mathcal{L}_{1}),\cdots, \text{deg}(\mathcal{L}_{r}))$. This map comes with a principal $S$-bundle over $C_0$, which we denote by $$Q_f.$$

\begin{remark}
Using the factorization (\ref{eq3}), we denote a prestable morphism $(f, u):C\rightarrow [E/G]$ as a triple
\[
(\phi:(C,x)\rightarrow(C_0,x), f:C\rightarrow Y, [u]:C_0\rightarrow [\tilde{V}/(S\times G)]),
\]
for rest of the paper.    
\end{remark}

\begin{definition}\label{def2}
	We call a prestable morphism $(\phi, f, [u]):C\rightarrow [E/G]$ a {\em prestable quasimap} to $[E^{ss}/G]$ of type $(g,n,\beta)$ if 
	 \begin{enumerate}
		\item $(\phi, f, [u]):C\rightarrow [E/G]$ is a prestable morphism of type $(g,n,\beta)$;
		\item\label{def2.2} $f$ is non-constant on each component of a rational tree of $C$ if and only if the rational tree is contracted under $\phi$;
		\item $[u]$ is a representable morphism such that $[u]^{-1}[\tilde{V}^{\text{us}}(\tilde{\theta})/(S\times G)]$ is finite and disjoint from all the nodes and marked points.
	\end{enumerate}
Here $$[\tilde{V}^{\text{us}}(\tilde{\theta})/(S\times G)]:=[\tilde{V}/(S\times G)]\backslash[\tilde{V}^{ss}(\tilde{\theta})/(S\times G)]$$ 
is the base locus and $y\in [\tilde{V}^{\text{us}}(\tilde{\theta})/(S\times G)]$ is called a {\em base point}. We denote the stack of prestable quasimaps to $[E^{ss}/G]$ of type $(g,k,\beta)$ by \[Q^{\text{pre}}_{g,k}([E^{ss}/G],\beta).\]
\end{definition}

\begin{remark}
    Condition (\ref{def2.2}) is essential to keep the irreducible components of the underlying curve bounded, as proved in Lemma \ref{lem2.11}. Also, see Remark \ref{rem2.6} and \ref{rem2.8}.
\end{remark}

\begin{remark}
    Note that the definition of a GIT stack bundle as the pullback of a vector bundle from the product of projective spaces is essential in constructing the contraction maps used in Definition \ref{def2.1}.
\end{remark}

\subsubsection{Stability condition}  
To simplify the notation we introduce 
\[X:=[\tilde{V}/(S\times G)],\hspace{0.5in} X^{ss}:=[\tilde{V}^{ss}/(S\times G)].\]

\begin{definition}\label{stabcon}
A prestable quasimap to $[E^{ss}/G]$, \[(\phi:(C,x_1,\cdots x_n)\rightarrow (C_0,x_1,\cdots ,x_n),f:C\rightarrow Y,[u]:C_0\rightarrow X),\] is $\epsilon$-stable if for each irreducible component $C'\subset C_0$, $f|_{C'}$ is a stable map to $Y$ or its associated quasimap $((C_0,x),[u])|_{C'}$ to $X^{ss}$ is $\epsilon$-stable in the sense of Definition \ref{def5}.
\end{definition}

\begin{remark}\label{rem2.6}
    By condition (\ref{def2.2}) of Definition \ref{def2}, any rational tail $C'\subset C$ that survives contraction is $f$-constant. Hence, by stability condition, $((C_0,x),[u])|_{C'}$ to $X^{ss}$ is $\epsilon$-stable. In particular, if $\epsilon$ is very small, all rational tails are forced to be contracted, giving us $0^+$-stable quasimaps. On the other hand, if $\epsilon$ is sufficiently large, any non-zero degree rational tail is allowed in $C$, giving us usual stable maps.
\end{remark}

\begin{remark}\label{rem2.8}
    If we fix $f: C\rightarrow Y$ to be constant, by Definition \ref{def2} and stability condition, moduli space of $\epsilon$-stable maps in Definition \ref{stabcon}  becomes moduli space of $\epsilon$-stable quasimaps to GIT stack quotient $[V^{ss}(\theta)/G]$. Similarly, if we fix $u:C\rightarrow [\tilde{V/(S\times G)}]$ to be constant, we get moduli space of stable maps to $Y$. 
\end{remark}
\begin{definition}
An isomorphism between two quasimaps (over $\C$) $$(\phi:(C, x)\rightarrow (C_0,x), f:C\rightarrow Y, [u])$$ and $$(\phi':(C',x')\rightarrow (C'_0,x'), f':C'\rightarrow Y, [v])$$ is a pair of isomorphism and $2$-isomorphism $$(\theta: (C,x) \xrightarrow{\sim} (C',x'), \delta: [u]\circ\phi\xrightarrow{\sim}[v]\circ\phi'\circ\theta)$$ such that
\begin{equation*}
f'\circ \theta =f,	\quad \quad \quad  \theta(x_i)=x'_i \quad \quad \quad \text{for all } i=1,\cdots , n.
\end{equation*}
\end{definition}
	
\subsubsection{Moduli stack}
The moduli space of $\epsilon$-stable quasimaps to $[E^{ss}/G]$ of type $(g,n,\beta)$ is denoted by $$Q^{\epsilon}_{g,n}([E^{ss}/G],\beta).$$ 
As the stability condition is a union of two open conditions, $Q^{\epsilon}_{g,n}([E^{ss}/G],\beta)$ is an open substack of $Q_{g,n}^{\text{pre}}([E^{ss}/G],\beta)$. 
	
\begin{Prop}\label{Prop2}
For $\epsilon>0$ fixed, the automorphism group of an $\epsilon$-stable quasimap is finite and reduced.
\end{Prop}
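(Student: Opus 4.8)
The plan is to reduce the statement to the corresponding finiteness and reducedness statements for stable maps to $Y$ and for $\epsilon$-stable quasimaps to the GIT stack quotient $X^{ss}$, both of which are known. First I would recall that an automorphism of an $\epsilon$-stable quasimap $(\phi:(C,x)\to(C_0,x), f:C\to Y, [u])$ is a pair $(\theta,\delta)$ where $\theta$ is an automorphism of the marked twisted curve $(C,x)$ and $\delta$ is a $2$-isomorphism $[u]\circ\phi\xrightarrow{\sim}[u]\circ\phi\circ\theta$, subject to $f\circ\theta=f$. Since $\phi$ is canonically determined (it is the contraction of $u$-constant unmarked rational trees, with rational bridges stabilized, as in \eqref{eq3}), any such $\theta$ descends to an automorphism $\theta_0$ of $(C_0,x)$ compatible with $\phi$, so the automorphism group sits in an exact sequence relating automorphisms of the data on $C$ to automorphisms of the data on $C_0$ together with the automorphisms of the fibers of $\phi$.

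Next I would analyze the two pieces separately using the stability condition of Definition \ref{stabcon}. On each irreducible component $C'\subset C_0$, either $f|_{C'}$ is a stable map to $Y$ or the associated quasimap $((C_0,x),[u])|_{C'}$ to $X^{ss}$ is $\epsilon$-stable. On components of the first kind, the constraint $f\circ\theta=f$ together with finiteness of automorphisms of stable maps pins down $\theta_0$ up to a finite group. On components of the second kind, finiteness of the automorphism group of an $\epsilon$-stable quasimap to a GIT stack quotient — which is part of the theory developed in \cite{ckm2014stable}, \cite{CCK} and quoted in Section \ref{IFunc} — controls the pair $(\theta_0|_{C'},\delta|_{C'})$. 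The contracted rational trees $T_1,\dots,T_l$ of $\phi$ each carry a non-constant $f|_{T_i}$ to $Y$ by condition (2) of Definition \ref{def2}, hence are themselves stabilized as maps to $Y$ (they are rational tails attached at one point, and $f$ is non-constant there), so their automorphisms are again finite; gluing these finite groups over the finitely many components and nodes gives finiteness of $\Aut$. For reducedness one argues infinitesimally: an infinitesimal automorphism would be a vector field on $(C,x)$ preserving $f$ and lifting compatibly through $[u]\circ\phi$; restricting to each component it lies in the (vanishing) tangent space to the relevant automorphism group — zero for stable maps to $Y$ and zero for $\epsilon$-stable quasimaps to $X^{ss}$ — hence is zero, so $\Aut$ has trivial tangent space and is therefore reduced (a finite group scheme over $\C$ with trivial tangent space at the identity is étale, i.e. reduced).

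The main obstacle I expect is the bookkeeping at the contraction $\phi$: one must check that the stabilization procedure for rational bridges (stabilize by adding sections, contract, then forget) does not introduce spurious infinitesimal automorphisms, and that the $2$-isomorphism datum $\delta$ on $C$ is equivalent to a $2$-isomorphism datum on $C_0$ plus the (rigid, because $f$ is non-constant there) data on the contracted trees. In other words, the real content is showing that the map $\Aut(\phi,f,[u])\to \Aut([u]\text{-quasimap on }C_0)\times \prod_i \Aut(f|_{T_i})$ is injective with finite, reduced image — once that is established, the known finiteness/reducedness of automorphisms of stable maps to $Y$ and of $\epsilon$-stable quasimaps to $X^{ss}$ finishes the argument. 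A secondary technical point is handling the gerbe structure at the stacky markings and nodes, but since the gerbes are trivialized (banded by $\mu_r$ with a chosen trivialization, as set up at the start of Section \ref{sec:e-quasimap}) their automorphism contributions are the finite groups $\mu_r$, which only enlarge $\Aut$ by a finite factor and contribute nothing to the tangent space.
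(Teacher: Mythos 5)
Your proposal is correct and follows essentially the same route as the paper's proof: reduce to rational components, split according to whether $f$ is non-constant (stable map to $Y$, handled via \cite[Theorem 1.4.1]{AV}) or constant (so the quasimap on the corresponding component of $C_0$ is $\epsilon$-stable, handled via \cite[Section 2.4.2]{CCK}), and invoke the known finiteness and reducedness in each case. Your version is more explicit about the reducedness argument via vanishing of the tangent space, the bookkeeping across the contraction $\phi$, and the contribution of the $\mu_r$-gerbes at markings, all of which the paper leaves implicit; the underlying decomposition and the sources cited are the same.
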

\begin{proof}
It is enough to prove this for rational components $C'\subset C$.  For $C'$ such that $f|_{C'}$ is constant, $(C_0,[u])|_{C'}$ is $\epsilon$-stable and the result follows from \cite[Section 2.4.2]{CCK}. Furthermore, if $f|_{C'}$ is non-constant for $C'\subset C$ then $(C',f_{C'})$ is a stable map to $Y$ and by prestability condition, $\phi(C')$ cannot be a rational tail. Hence $(C',[u]|_{C'})$ is a rational bridge which is either $0^+$-stable or is constant under $[u]$. In either cases the proposition follows by \cite[Theorem 1.4.1]{AV} and \cite[Section 2.4.2]{CCK}.
\end{proof}

\begin{thm}\label{thm9}
$Q_{g,n}^{\epsilon}([E^{ss}/G],\beta)$ is a Deligne--Mumford stack, locally of finite type over $\C$.
\end{thm}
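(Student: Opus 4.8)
The plan is to establish the two assertions—being Deligne--Mumford and being locally of finite type—by reducing to known results about quasimaps and stable maps through the presentation of $[E^{ss}/G]$, exactly as the moduli stack $Q^{\epsilon}_{g,n}([E^{ss}/G],\beta)$ was built. First I would recall that by definition an object of $Q^{\epsilon}_{g,n}([E^{ss}/G],\beta)$ is a triple $(\phi:(C,x)\to(C_0,x),\, f:C\to Y,\, [u]:C_0\to X)$, so there is a natural morphism
\[
Q^{\epsilon}_{g,n}([E^{ss}/G],\beta)\longrightarrow \overline{M}_{g,n}(Y,\beta')\times_{?}\,(\text{quasimap data})
\]
that remembers the stable map $f$ and the quasimap $[u]$, compatibly with the condition $Q_f\cong Q_u$. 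More precisely, the cleanest route is: (a) the stack $Q^{\text{pre}}_{g,n}([E^{ss}/G],\beta)$ of prestable quasimaps is an algebraic stack, locally of finite type, because it is cut out inside a fiber product of the Artin stack of prestable twisted curves, the mapping stack $\underline{\mathrm{Hom}}(C,Y)$ (whose degree-$\beta'$ part is of finite type once we impose stability on $f$, but a priori is locally of finite type), and the relative mapping stack to $X=[\tilde V/(S\times G)]$, together with the contraction datum $\phi$ and the identification of principal $S$-bundles; (b) $Q^{\epsilon}_{g,n}([E^{ss}/G],\beta)$ is the open substack where the $\epsilon$-stability condition of Definition \ref{stabcon} holds, and openness was already noted in the paragraph preceding \Prop{Prop2}.

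The key steps, in order, are as follows. Step 1: Show $Q^{\text{pre}}_{g,n}([E^{ss}/G],\beta)$ is algebraic and locally of finite type over $\C$. For this I would use that the stack of twisted curves $\mathfrak{M}^{\text{tw}}_{g,n}$ is a smooth Artin stack locally of finite type \cite{AV}, that $\underline{\mathrm{Hom}}$-stacks from proper curves to the projective variety $Y$ and to the finite-type stack $X$ are algebraic and locally of finite type (here one uses the presentation: mapping to $[\tilde V/(S\times G)]$ is controlled by mapping to $[W/S]$ together with a section of an associated bundle, and mapping to $Y$ factors through $\psi$ to a product of projective spaces), and that the contraction morphism $\phi$ and the data comparing $Q_f$ and $Q_u$ impose closed/representable conditions. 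Step 2: Restrict to the open locus of $\epsilon$-stable objects; finiteness of type then follows because $\epsilon$-stability bounds the combinatorics of $C$ (number of components, their degrees under $f$ and $[u]$) in terms of $(g,n,\beta,\epsilon)$: on each component either $f$ is a stable map to $Y$ of bounded degree, or $[u]$ restricts to an $\epsilon$-stable quasimap with bounded degree, and the length bound $\epsilon l(x)\le 1$ controls base points—this is the standard boundedness argument of \cite{ckm2014stable}, \cite{CKwallcross}, \cite{CCK} carried out fiberwise over $Y$. Step 3: Verify the Deligne--Mumford property, i.e.\ that the diagonal is unramified; equivalently that objects have finite, reduced (hence étale) automorphism groups and no infinitesimal automorphisms. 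The automorphism statement is precisely \Prop{Prop2}, and unramifiedness of the diagonal follows by combining \Prop{Prop2} with the fact that isomorphisms of such triples are a closed condition inside the (representable, unramified-diagonal) ambient fiber product; alternatively one notes that $\overline{M}_{g,n}(Y,\beta')$ and the quasimap moduli $Q^{\epsilon}_{g,n}(X^{ss},\tilde\beta)$ are already Deligne--Mumford (\cite{ckm2014stable}, \cite{CCK}) and $Q^{\epsilon}_{g,n}([E^{ss}/G],\beta)$ maps to their fiber product with representable, separated, quasi-finite fibers coming from the choice of contraction $\phi$ and the $S$-bundle comparison.

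The main obstacle I expect is Step 1 combined with the bookkeeping in Step 2: the object here is not literally a map to a single target but a pair $(f,[u])$ living over possibly different curves $C$ and $C_0$ linked by a contraction $\phi$, so one must be careful that the stack of such contraction data (prestable twisted $C$ with a partial stabilization $C_0$, and the matching of the induced principal $S$-bundles $Q_f\cong Q_u$) is algebraic and that this matching is a reasonable—ideally representable and finite-type—condition. I would handle this by treating $\phi$ via the relative moduli of prestable curves over $C_0$ (contractions of rational tails are controlled and bounded once $\beta'$ and the $\epsilon$-stability are fixed), and by observing that $Q_f$ and $Q_u$ are both pulled back from the universal $S$-bundle on $[W/S]$, so their identification is a section of an $\underline{\mathrm{Isom}}$-scheme over $C_0$, which is affine of finite type. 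Once these ingredients are in place, the theorem follows formally by pulling back the Deligne--Mumford and locally-finite-type properties along representable, locally-finite-type morphisms and passing to the open $\epsilon$-stable locus.
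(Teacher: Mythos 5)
Your proposal follows the same high-level strategy as the paper: reduce to the prestable case (since $\epsilon$-stability is open), exhibit $Q^{\mathrm{pre}}_{g,n}([E^{ss}/G],\beta)$ as algebraic and locally of finite type via known moduli stacks, then invoke Proposition~\ref{Prop2} to pass from Artin to Deligne--Mumford. However, where you identify ``the main obstacle'' --- encoding the contraction $\phi$ and the $S$-bundle comparison $Q_f\cong Q_u$, and propose handling it via relative moduli of contractions and an $\underline{\mathrm{Isom}}$-scheme --- the paper has a much cleaner observation that dissolves the issue entirely. The paper realizes $Q^{\mathrm{pre}}_{g,n}([E^{ss}/G],\beta)$ directly as the fiber product
\[
\mathfrak{M}^{tw,\mathrm{pre}}_{g,n}(Y,\beta') \times_{Q^{\mathrm{pre}}_{g,n}([W^{ss}/S],d')} Q^{\mathrm{pre}}_{g,n}(X^{ss},(d',\beta'')),
\]
where the bottom map sends $((C,x),f)$ to the quasimap to $[W^{ss}/S]$ on the curve $C_0$ obtained by contracting the $f$-nonconstant rational tails. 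Two points make this work without extra bookkeeping: (i) by condition (2) of Definition~\ref{def2}, the contraction $\phi:C\to C_0$ is \emph{determined} by the pair $(C,f)$, so it is not additional moduli data, and (ii) the identification $Q_f\cong Q_u$ is exactly the condition that the square commutes, so it is built into the fiber product rather than an extra constraint to be imposed. With this, algebraicity and local finite type of all three corners (from \cite{CCK} and \cite{AV}) transfer to the fiber product, and DM follows from Proposition~\ref{Prop2}. So your approach is not wrong, but your proposed workaround introduces machinery (Isom-schemes, relative contraction moduli) that the paper avoids; if you instead verify directly that the diagram above is cartesian --- by writing down the mutually inverse maps on objects as the paper does --- the argument closes more efficiently. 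One small additional caution: you should be consistent about using the \emph{prestable} map stack $\mathfrak{M}^{tw,\mathrm{pre}}_{g,n}(Y,\beta')$ (Artin, locally of finite type), not $\overline{M}_{g,n}(Y,\beta')$, in the fiber product before passing to the open $\epsilon$-stable locus.
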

\begin{proof}
By Proposition \ref{Prop2}, it suffices to prove that the stack is Artin. Moreover, as $\epsilon$-stability condition is an open condition, $Q_{g,n}^{\epsilon}([E^{ss}/G],\beta)$ is an open substack of $Q_{g,n}^{\text{pre}}([E^{ss}/G],\beta)$. Hence it is enough to show $Q_{g,n}^{\text{pre}}([E^{ss}/G],\beta)$ is an Artin stack, locally of finite type. Indeed, $Q_{g,n}^{\text{pre}}([E^{ss}/G],\beta)$ fits into the following fiber diagram of Artin stacks, locally of finite type over $\C$ (\cite[Section 2.4.1]{CCK},\cite[Theorem 1.4.1]{AV}): 
	\begin{equation}\label{cd1}
		\begin{tikzcd}
		{Q_{g,n}^{\text{pre}}([E^{ss}/G],\beta)}\arrow[r]\arrow[d]& {Q_{g,n}^{\text{pre}}(X^{ss},({d'},{\beta''}))}\arrow[d]\\
		{\mathfrak{M}_{g,n}^{tw,\text{pre}}(Y,\beta')}\arrow[r] & {Q_{g,n}^{\text{pre}}([W^{ss}/S],{d'})},
	\end{tikzcd}
	\end{equation}
where we write $\tilde{\beta}=(\tilde{d},\beta'')$ and $\mathfrak{M}_{g,n}^{tw,\text{pre}}(Y,\beta')$ is a stack of degree $\beta'$ prestable maps from genus $g$, $n$-pointed twisted curve to $Y$. Over geometric points, the above diagram looks as follows:
\[
\begin{tikzcd}
	{(\phi: (C,x) \rightarrow (C_0,x),f,[u])}\arrow[r, mapsto]\arrow[d, mapsto]& {((C_0,x),[u])}\arrow[d, mapsto]\\
	{((C,x), f)}\arrow[r, mapsto] & {((C_0,x),[u'])}.
\end{tikzcd}
\]
Here, $[u']$ is the map obtained in (\ref{eq4}). Hence, the vertical map is given by projection. The lower map is constructed as in (\ref{eq5}). Note here that the contraction map $C\rightarrow C_0$ is completely determined by the curve $C$ and map $f$, see conditions (2) in Definition \ref{def2}. Hence, the lower map is well-defined. The diagram is commutative by the definition of prestable morphism to $[E^{ss}/G]$, specifically we need $Q_f\cong Q_{u}$.  

To see that the diagram is cartesian, consider the morphism 
\[
\mathfrak{M}_{g,n}^{tw,\text{pre}}(Y,\beta')\times_{Q_{g,n}^{\text{pre}}([W^{ss}/S],{d'})} Q_{g,n}^{\text{pre}}(X^{ss},({d'},\beta''))\rightarrow Q_{g,n}^{\text{pre}}([E^{ss}/G],\beta)
\]
which maps
\[
(((C,x), f),((C_0,x),[u]))\mapsto (\phi: (C,x) \rightarrow (C_0,x),f,[u]).
\]
Therefore, we have the required result.
 \end{proof}

\subsection{Properness of moduli stack}\label{subsec:proper_e-quasimap}
For a class $\beta=(\beta',\tilde{\beta})$ and line bundle $L'\times \tilde{L}$ on $[E/G]$, define
\[
\beta(L'\times \tilde{L}):=\beta'(L')+\tilde{\beta}(\tilde{L}).
\]
Let $\O(1)$ be an ample line bundle on $Y$. Then,
\begin{equation}\label{linebun}
	L:=\pi^*(\O(1)\otimes \psi^*(\times_{j=1}^{r}\O_{\P^{n_j-1}}(m)))\otimes [E\times \C_{\theta}/G]
\end{equation}
is an ample line bundle on $[E/G]$. By \cite[Lemma 3.2.1]{ckm2014stable}, we have $\beta(L)\geq 0$ and $\beta(L)=0$ if and only if $\beta=0$, if and only if $f$ and $ [u] $ are constant. This gives the following result on boundedness. 

\begin{lemma}\label{lem2.11}
The number of irreducible components of the underlying curve of an $\epsilon$-stable quasimap to $[E^{ss}/G]$ of type $(g,n,\beta)$ is bounded. 
\end{lemma}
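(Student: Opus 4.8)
The plan is to bound the number of irreducible components by separately bounding the contributions of three types of components: rational tails of $C$ contracted by $\phi$, rational bridges, and the ``stable core'' components of $C_0$ on which the map is already stabilized. First I would recall that since $[E^{ss}/G]$ is projective, the degree $\beta(L) \geq 0$ with respect to the ample line bundle $L$ of \eqref{linebun}, and that $\beta(L)$ splits as a sum of non-negative local contributions over the irreducible components of $C_0$ (or of $C$), by the standard positivity estimate \cite[Lemma 3.2.1]{ckm2014stable}. Then on each irreducible component $C'$ either $f|_{C'}$ is a stable map to $Y$ or the associated quasimap $((C_0,x),[u])|_{C'}$ is $\epsilon$-stable; in either case $C'$ carries a definite positive amount of ``degree plus marking/node data'' coming from ampleness of the relevant $\mathbb{Q}$-line bundle.

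Next I would run the familiar counting argument of \cite{ckm2014stable} (see also \cite{CCK}, \cite{CKwallcross}) adapted to the present setup. For components on which $f$ is non-constant, the bound follows because $(C', f|_{C'})$ is an honest stable map to $Y$, whose degree is a piece of $\beta'$; a fixed $\beta'$ bounds the number of such components and the total $f$-degree. For components on which $f$ is constant, $((C_0,x),[u])|_{C'}$ is $\epsilon$-stable, so by Definition \ref{def5} the $\mathbb{Q}$-line bundle $\omega_{\underline{C}}(\sum \underline{x_i}) \otimes (\varphi_*[u]^*L_\theta)^{\otimes \epsilon}$ restricted to $C'$ is ample; genus, marked points, and special points are fixed, and the $[u]$-degree of such a component is a piece of $\tilde\beta$ (more precisely of $\beta''$), so again finiteness of $\beta$ bounds these. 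The rational tails $T_1, \ldots, T_l$ contracted by $\phi$ are precisely those on which $f$ is non-constant (by condition (2) of Definition \ref{def2}), so they are controlled by the bound on $f$-degree; the rational bridges are controlled by the Proposition \ref{Prop2}-type analysis together with the fact that a prestable curve of fixed arithmetic genus $g$ with a bounded number of ``essential'' components has a bounded number of rational bridges joining them.

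The key point to assemble carefully is the interplay between the two pieces $\beta'$ and $\tilde\beta$ of the class: a component could in principle be ``cheap'' for both $f$ and $[u]$ yet still present. This is exactly why the ampleness of $L$ in \eqref{linebun} is the right object — it mixes $\O(1)$ on $Y$, the $\psi^*\O(m)$ twists, and $[E\times\C_\theta/G]$, so that $\beta(L)=0$ forces both $f$ and $[u]$ constant; hence any component contributes a strictly positive amount to $\beta(L)$ unless it is a rational tail/bridge of the degenerate type, and those are combinatorially bounded once $g$ and the number of markings $n$ are fixed. I would therefore organize the proof as: (i) non-constant-$f$ components are bounded by $\beta'$; (ii) $\epsilon$-stable non-contracted quasimap components are bounded by $\beta''$ together with $(g,n)$; (iii) the remaining rational tails and bridges form a forest attached to the components from (i)–(ii), and such a forest on a genus-$g$, $n$-pointed curve with a bounded number of attaching points has a bounded number of components.

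I expect the main obstacle to be (iii): cleanly ruling out an unbounded chain or tree of rational bridges/tails that individually carry zero degree for \emph{both} $f$ and $[u]$. The resolution is that such a chain cannot actually occur in a \emph{prestable quasimap} — a contracted rational bridge with no special points other than its two nodes would violate the prestability/stability bookkeeping (cf.\ the rational-bridge discussion in the proof of \Prop{2} and \cite[Lemma 9.2.1]{AV}), and a contracted rational tail must have $f$ non-constant by Definition \ref{def2}(2), hence nonzero $f$-degree. Making this last dichotomy airtight — that every ``extra'' rational component is forced to carry positive $\beta(L)$ or else is one of finitely many combinatorial tails/bridges pinned down by $(g,n)$ — is the crux, and I would spell it out by a short induction on the dual graph, contracting components of positive $\beta(L)$-degree and arguing that what remains is a stable pointed curve in the usual Deligne–Mumford sense.
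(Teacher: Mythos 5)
Your proposal is correct and follows essentially the same strategy as the paper: positivity of $\beta(L)$ with respect to the ample line bundle $L$ of \eqref{linebun}, together with the stability condition in Definition~\ref{stabcon} (each component is either an $f$-stable map or an $\epsilon$-stable quasimap), to bound contracted rational tails by $\beta'$, non-contracted rational components by the quasimap bound of \cite[Cor.~3.2.3]{ckm2014stable}, and the rest by $(g,n)$. The paper's proof is much terser and does not pause on the rational-bridge ``chain'' issue you identify as the crux, since Definition~\ref{stabcon} already excludes any unmarked degree-zero rational bridge on both sides of the dichotomy, making the separate induction on the dual graph unnecessary.
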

\begin{proof}
Irreducible components with positive genus or marked points are bounded by $g,n$. Rational tails $(g=0,n=0)$ contracted under $\phi$ are bounded because $\beta(L)>0$ on these components, since by definition $f$ is non-constant. Similarly, non-contracted rational components are bounded by applying \cite[Corollary 3.2.3]{ckm2014stable} (see \cite[Section 2.4.3]{CCK}) on quasimap $(C_0, [u])$ to $X^{ss}$.
\end{proof}

\begin{Cor}
  $Q^{\epsilon}_{g,n}([E^{ss}/G],\beta)$ is finite type over $\C$.  
\end{Cor}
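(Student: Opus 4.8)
The statement to prove is the corollary that $Q^{\epsilon}_{g,n}([E^{ss}/G],\beta)$ is of finite type over $\C$. The plan is to combine the two facts already established in the preceding excerpt: first, $\Section{e-quasimap}$'s $\thm{9}$ (Theorem \ref{thm9}) shows that $Q^{\epsilon}_{g,n}([E^{ss}/G],\beta)$ is a Deligne--Mumford stack, locally of finite type over $\C$; second, $\lemma{2.11}$ (Lemma \ref{lem2.11}) bounds the number of irreducible components of the underlying twisted curve of any $\epsilon$-stable quasimap of type $(g,n,\beta)$. Since a locally-of-finite-type stack is of finite type precisely when it is quasi-compact, the task reduces to deducing quasi-compactness from the combinatorial bound in Lemma \ref{lem2.11} together with the numerical bounds on the degrees $\beta=(\beta',\tilde\beta)$.

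First I would argue that the discrete data of an $\epsilon$-stable quasimap --- the dual graph of the curve (with genus and marking distribution at vertices), the twisting orders $r$ at the nodes and markings, and the distribution of the curve class among components --- takes only finitely many values. The number of vertices is bounded by Lemma \ref{lem2.11}; since the total genus is $g$ and there are $n$ markings, the combinatorial type of the dual graph is then constrained to finitely many possibilities. For the class distribution, one uses the ample line bundle $L$ on $[E/G]$ from \eqn{linebun} (equation \ref{linebun}): the argument in the proof of Lemma \ref{lem2.11}, via \cite[Lemma 3.2.1]{ckm2014stable} and \cite[Corollary 3.2.3]{ckm2014stable}, shows $\beta(L)\ge 0$ on every component with equality iff the component carries trivial class, so the class $\beta$ splits in only finitely many ways among the bounded set of components. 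The orders of twisting at nodes and markings are likewise bounded once the class and component type are fixed. I would also note that the number of base points, and their lengths, are constrained by the $\epsilon$-stability inequality $\epsilon\, l(x)\le 1$ together with $l(x)\le \deg(\tilde\beta)$-type bounds.

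Next I would stratify $Q^{\epsilon}_{g,n}([E^{ss}/G],\beta)$ by these finitely many discrete types and show each stratum is quasi-compact. Here the cleanest route is to invoke the cartesian diagram \eqn{cd1} (equation \ref{cd1}) from the proof of Theorem \ref{thm9}, which realizes the prestable moduli as a fiber product over $Q^{\text{pre}}_{g,n}([W^{ss}/S],d')$ of $\mathfrak{M}^{tw,\text{pre}}_{g,n}(Y,\beta')$ and $Q^{\text{pre}}_{g,n}(X^{ss},(d',\beta''))$. Each of the latter three is an Artin stack locally of finite type, and for fixed discrete type each is quasi-compact by the corresponding finiteness statements for stable maps to the projective variety $Y$ (boundedness of twisted stable maps, e.g.\ \cite{AV}) and for prestable quasimaps to the GIT stack quotients $X^{ss}$ and $[W^{ss}/S]$ (boundedness in \cite{ckm2014stable}, \cite{CCK}); a fiber product of quasi-compact stacks over a stack that is (for the relevant type) quasi-separated is again quasi-compact. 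Intersecting with the open substack cut out by $\epsilon$-stability preserves quasi-compactness since $Q^{\epsilon}$ is in fact closed-and-open-compatible (the $\epsilon$-stable locus is open, and within a fixed-type quasi-compact stratum this suffices). A finite union of quasi-compact substacks is quasi-compact, giving the result.

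The main obstacle, and the step I would spend the most care on, is verifying that the curve class $\beta=(\beta',\tilde\beta)$ admits only finitely many splittings among components in a way that is compatible with $\epsilon$-stability --- in particular handling the contracted rational tails (on which $f$ is non-constant but $[u]$ may be constant) and the base points simultaneously. The subtlety is that positivity of $\beta(L)$ must be extracted separately on the $Y$-factor (via ampleness of $\O(1)\otimes\psi^*\O(m)$) and on the fiber factor (via the GIT quasimap positivity of \cite{ckm2014stable}), and one must check these combine to a uniform lower bound forcing finiteness; once that numerical bound is in place, the rest is a formal quasi-compactness argument riding on diagram \eqn{cd1}.
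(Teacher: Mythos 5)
Your proposal is correct and takes essentially the same route as the paper: the paper's proof is the one-line observation that the corollary follows from Theorem \ref{thm9} (locally of finite type) together with Lemma \ref{lem2.11} (bounded number of components), and your argument is exactly the standard unwinding of that implication into quasi-compactness via finitely many discrete types and the fiber diagram \eqn{cd1}.
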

\begin{proof}
This follows from Lemma \ref{lem2.11} and Theorem \ref{thm9}.
\end{proof}

\begin{Prop}\label{Prop1}
$Q_{g,n}^{\epsilon}([E^{ss}/G],\beta)$ is proper over $\C$.
\end{Prop}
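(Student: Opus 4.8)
The plan is to prove properness via the valuative criterion, using the standard reduction to the case of a discrete valuation ring. Let $R$ be a DVR over $\C$ with fraction field $K$, special point $0$, and generic point $\eta$; given an $\epsilon$-stable quasimap over $\Spec K$, I want to show that after a finite base change it extends uniquely to an $\epsilon$-stable quasimap over $\Spec R$. Since $Q^\epsilon_{g,n}([E^{ss}/G],\beta)$ is of finite type over $\C$ (by the preceding Corollary), it suffices to check the valuative criterion, and by Theorem~\ref{thm9} it is a Deligne--Mumford stack so separatedness and existence of limits can be checked after finite extensions of $K$.

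The key structural device is the cartesian diagram (\ref{cd1}), which realizes $Q^{\text{pre}}_{g,n}([E^{ss}/G],\beta)$ as a fiber product of $\mathfrak{M}^{tw,\text{pre}}_{g,n}(Y,\beta')$ and $Q^{\text{pre}}_{g,n}(X^{ss},(d',\beta''))$ over $Q^{\text{pre}}_{g,n}([W^{ss}/S],d')$. Properness will follow from properness of the corresponding moduli of $\epsilon$-stable quasimaps to the GIT \emph{stack} quotient $X^{ss}$ (which is the content of \cite{CCK}, extending \cite{ckm2014stable}) together with properness of $\overline{\mathcal{M}}_{g,n}(Y,\beta')$, glued along the common principal $S$-bundle datum. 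Concretely, the steps are: (1) given the family over $K$, extend the stable-maps component $(C,x)\to Y$ using properness of $\overline{\mathcal{M}}_{g,n}(Y,\beta')$ — this produces a (possibly modified) twisted curve $\mathcal{C}\to \Spec R$ and a map $f$ over $R$; (2) after contracting $f$-constant rational tails as in (\ref{eq3}), extend the quasimap $[u]:\mathcal{C}_0\to X^{ss}$ over $R$ using properness of $Q^{\epsilon'}_{g,n}(X^{ss},\cdot)$ for the appropriate $\epsilon'$ (here one uses $0^+$-stability on the rational-bridge pieces and $\epsilon$-stability elsewhere, matching Definition~\ref{stabcon}); (3) check that the two extensions are compatible, i.e. the induced principal $S$-bundles $Q_f$ and $Q_u$ on $\mathcal{C}_0$ remain isomorphic over the special fiber — this is where the fiber-product description (\ref{cd1}) does the work, since the map $\mathcal{C}_0\to [W^{ss}/S]$ produced from $f$ via (\ref{eq5}) and the one produced from $[u]$ via (\ref{eq4}) must agree; (4) verify that the limiting object is again $\epsilon$-stable and that the extension is unique.

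The main obstacle I expect is step (3) combined with the bookkeeping of the contraction map $\phi$: over $K$ the curve $C$, its contraction $C_0$, and the compatibility $Q_f\cong Q_u$ are given, but when one extends $f$ over $R$ the stable-reduction process may introduce new rational components in the central fiber, and one must check these are exactly the components contracted by the extended $\phi$ (using condition (2) of Definition~\ref{def2}, which says $\phi$ is determined by $C$ and $f$) and that no discrepancy arises between the $S$-bundles on the central fiber. Separatedness (uniqueness of the limit) reduces to the uniqueness statements in \cite{CCK} and in stable-maps theory for $Y$, again transported through (\ref{cd1}); one should be careful that the stabilization-by-sections trick used to avoid contracting rational bridges (described after (\ref{eq3})) does not affect uniqueness. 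A secondary point worth spelling out is that the target $X^{ss}=[\tilde V^{ss}(\tilde\theta)/(S\times G)]$ is proper (noted in the excerpt as nonsingular, projective) and that conditions (iii) and (v) of the presentation guarantee the relevant quasimap moduli to $X^{ss}$ are themselves proper, so the cited properness results genuinely apply; with that in place the argument assembles into a proof of the valuative criterion and hence of properness.
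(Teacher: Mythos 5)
Your high-level plan (valuative criterion, reduce to properness of $\overline{\mathcal{M}}_{g,n}(Y,\beta')$ and of $\epsilon$-stable quasimaps to $X^{ss}$, fiber-product structure) is the right starting point, and you have correctly identified compatibility of the $S$-bundle data and the contraction bookkeeping as the pressure point. But there are two genuine gaps in the way you propose to assemble the pieces.

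First, the order of extension in step (1)--(2) is backwards in a way that breaks the argument. You propose to first extend $(C,x)\to Y$ over $R$ and then extend $[u]:\mathcal{C}_0\to X^{ss}$ over $R$ where $\mathcal{C}_0$ is obtained from the already-extended $\mathcal{C}$ by contraction. But $(C,x,f)$ is not a stable map to $Y$ (components on which $f$ is constant are unstable), so one must stabilize it; and if you stabilize using only the data on the $Y$-side, the resulting extended $\mathcal{C}_0$ over $R$ is pinned down \emph{before} the quasimap side has any say. The quasimap extension generally needs to sprout new rational components in the central fiber of $\mathcal{C}_0$ (for instance where base points of large length try to form, or where length-$d_0$ base points come together), and these will simply not be present if $\mathcal{C}_0$ is already determined. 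There is no reason the extension of $[u]$ to $X^{ss}$ will factor through your fixed $\mathcal{C}_0$. The paper resolves this by reversing the order: first extend $((C_0,x),[u])$ as a $0^+$-stable quasimap (after adding sections) to get a curve $\overline{C}'_0$ over $R$; then use \emph{that} curve to decide where to add sections on $C$, so the extension of $f$ is coordinated with the quasimap extension from the start. Compatibility of $Q_f\cong Q_u$ is thus built in by construction rather than checked after the fact --- the fiber-product description of the moduli space is not by itself a mechanism for reconciling two independently chosen extensions, and invoking it in step (3) does not close the gap.

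Second, even with the correct order, a single round of extension at one stability parameter is not enough, and your step (2) (``extend at the appropriate $\epsilon'$'') glosses over this. The paper in fact performs several rounds: first a $0^+$-stable extension to get a candidate curve, then a stable-maps extension for $f$, then a fresh extension of $[u]$ at a carefully chosen intermediate $\epsilon'$ (determined by the maximal length of base points present), followed by contraction of rational tails on which \emph{both} $\overline{f}$ and the quasimap are constant, and finally a case analysis (depending on whether the limiting map to $Y$ is stable) in which one may need to re-extend $[u]$ at the level $\epsilon$ itself. Without this multi-stage procedure and the final $\epsilon$-stability check (which is nontrivial: one must rule out rational tails that are simultaneously $\hat f$-constant and $[\hat u]$-constant in the limit), the argument does not yield an $\epsilon$-stable extension. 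Your outline acknowledges that one should ``verify that the limiting object is again $\epsilon$-stable'' but does not supply the mechanism, and this is exactly where the real content of the proof lies.
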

Before we begin the proof, we start with an essential lemma.
\begin{lemma}\label{l6}
	$ Q^{\text{pre}}_{g,n}([E^{ss}/G],\beta )$ fits into the following fibered diagram,
	\begin{equation}\label{cd2}
		\begin{tikzcd}
			{Q_{g,n}^{\text{pre}}([E^{ss}/G],\beta)}\arrow[r]\arrow[d]& {Q_{g,n}^{\text{pre}}(X^{ss},({d'},\beta''))}\arrow[d]\\
			{\mathfrak{M}^{\text{pre}}_{g,n}(Y,\beta')}\arrow[r] & {Q_{g,n}^{\text{pre}}(W\sslash S,{d'})},
		\end{tikzcd}
	\end{equation}
where $\mathfrak{M}^{\text{pre}}_{g,n}(Y,\beta')$ is the stack of degree $\beta'$ prestable maps to $Y$ from $n$-pointed, genus $g$ non-twisted curves.
\end{lemma}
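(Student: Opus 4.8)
The plan is to obtain the diagram \eqref{cd2} from the diagram \eqref{cd1} established in the proof of Theorem~\ref{thm9}, by comparing the bottom-right corners. Diagram \eqref{cd1} already exhibits $Q_{g,n}^{\text{pre}}([E^{ss}/G],\beta)$ as the fiber product
\[
\mathfrak{M}_{g,n}^{tw,\text{pre}}(Y,\beta')\times_{Q_{g,n}^{\text{pre}}([W^{ss}/S],d')}Q_{g,n}^{\text{pre}}(X^{ss},(d',\beta'')).
\]
So it suffices to check that this fiber product is unchanged when we replace $[W^{ss}/S]$ by its GIT scheme quotient $W\sslash S$ in the bottom-right corner, and $\mathfrak{M}_{g,n}^{tw,\text{pre}}(Y,\beta')$ by $\mathfrak{M}_{g,n}^{\text{pre}}(Y,\beta')$ (non-twisted curves) in the bottom-left corner. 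Concretely, I would first set up the natural morphisms: the "good moduli space"/rigidification map $[W^{ss}/S]\to W\sslash S$ induces $Q_{g,n}^{\text{pre}}([W^{ss}/S],d')\to Q_{g,n}^{\text{pre}}(W\sslash S,d')$, and forgetting the twisted structure (which is forced by the fact that the curve $C_0$ carrying $[u]$ has no stacky points, the markings and nodes being away from base points) gives the left vertical map $\mathfrak{M}_{g,n}^{\text{pre}}(Y,\beta')\to Q_{g,n}^{\text{pre}}(W\sslash S,d')$ as in \eqref{eq5}, now landing in the scheme-quotient target. The compatibility $Q_f\cong Q_u$ again makes the square commute.

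Next I would verify the square is cartesian. The key geometric input is that $S=(\C^*)^r$ acts on $W=\prod_j\C^{n_j}$ with $W^{ss}(\eta)=\prod_j(\C^{n_j}\ssm\{0\})$, so that $W\sslash_\eta S=\prod_j\P^{n_j-1}$ is an honest projective variety and $[W^{ss}/S]=\prod_j\P^{n_j-1}$ as a \emph{scheme} — there is no stackiness in this particular quotient. Hence a quasimap to $[W^{ss}/S]$ from a curve $C_0$ is the same data as a quasimap to $W\sslash S$, i.e. the two bottom-right corners of \eqref{cd1} and \eqref{cd2} literally agree (the moduli stacks $Q_{g,n}^{\text{pre}}([W^{ss}/S],d')$ and $Q_{g,n}^{\text{pre}}(W\sslash S,d')$ are the same), and the source curve $C_0$ of such a stable quasimap is automatically a non-twisted (schematic) curve since a representable map from a twisted curve to a scheme must factor through the coarse moduli. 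This last point is what promotes $\mathfrak{M}_{g,n}^{tw,\text{pre}}(Y,\beta')$ to $\mathfrak{M}_{g,n}^{\text{pre}}(Y,\beta')$: in \eqref{cd1} the twisting on $C$ only enters through the map $u:C\to X^{ss}$, which may genuinely be orbifold, while the composite to $[W/S]$ factors through $C_0$ and sees no twisting; so the fiber-product description forces the base of the left vertical arrow to record only the coarse data, and I would make this precise by the universal property, exhibiting a morphism in each direction as in the proof of Theorem~\ref{thm9}.

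Finally, I would package the conclusion: since the two squares have identical bottom-right and top-right corners, and the bottom-left corners parametrize the same geometric objects (degree-$\beta'$ prestable maps from $n$-pointed genus-$g$ curves to $Y$, equipped via \eqref{eq5} with the induced $S$-bundle $Q_f$), the fiber product computing the top-left corner is the same, which gives \eqref{cd2}. I expect the main obstacle to be the bookkeeping around the twisted-versus-untwisted curves: one must argue carefully that the contraction $\phi:C\to C_0$ and the factorization \eqref{eq3} interact correctly with coarsening, so that no stacky information relevant to the fiber product is lost when passing from $\mathfrak{M}_{g,n}^{tw,\text{pre}}$ to $\mathfrak{M}_{g,n}^{\text{pre}}$ — in other words, that stackiness of $C$ is entirely "detected" by the $X^{ss}$-factor and not independently by the $Y$-factor. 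This is where I would invoke \cite[Proposition 9.1.1]{AV} and the balanced-gerbe-marking conventions of \cite[Section 4]{AGV} to control the twisted structure along $\phi$.
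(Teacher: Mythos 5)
Your general strategy --- compare the target square \eqref{cd2} to the square \eqref{cd1} established inside the proof of Theorem~\ref{thm9}, and use the geometry of the $S$-action on $W$ to pass from twisted to coarse data --- is the same as the paper's, but two of your stated claims are wrong in a way that hides the real content of the argument.

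First, you assert that "$Q_{g,n}^{\text{pre}}([W^{ss}/S],d')$ and $Q_{g,n}^{\text{pre}}(W\sslash S,d')$ are the same" because $[W^{ss}/S]$ is a scheme. But in the paper's notation these two moduli stacks differ in their \emph{source curves}: $Q_{g,n}^{\text{pre}}([W^{ss}/S],d')$ parametrizes quasimaps from twisted curves (it sits under $Q_{g,n}^{\text{pre}}(X^{ss},(d',\beta''))$ by projection), while $Q_{g,n}^{\text{pre}}(W\sslash S,d')$ parametrizes quasimaps from non-twisted curves. The right vertical map in \eqref{cd2} is \emph{not} the identity; it is the coarsening map $((C_0,x),[u])\mapsto ((\underline{C}_0,x),\underline{[u']})$. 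Relatedly, you claim that $C_0$ "is automatically a non-twisted (schematic) curve"; this is false. $C_0$ must carry twisted structure at nodes and markings for $[u]:C_0\to X^{ss}$ to be a representable morphism to the genuinely orbifold target $X^{ss}$. What is true, and what you should have said, is that the induced map $C_0\to[W/S]$ factors through the coarse curve $\underline{C}_0$: near the stacky points of $C_0$ the image lies in $[W^{ss}/S]\cong\prod_j\P^{n_j-1}$ (a scheme), and near the base points $C_0$ is itself schematic by prestability.

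Second, and more importantly, once you set up the square with $Q_{g,n}^{\text{pre}}(W\sslash S,d')$ and $\mathfrak{M}^{\text{pre}}_{g,n}(Y,\beta')$ (both living over non-twisted curves) in the bottom row, the crux of showing it is cartesian is to recover the twisted curve $C$ from $(\underline{C},f)$ and $(C_0,[u])$: one must prove the canonical map $C\to\underline{C}\times_{\underline{C}_0}C_0$ is an isomorphism. You only gesture at this ("stackiness of $C$ is entirely detected by the $X^{ss}$-factor") and cite \cite[Proposition 9.1.1]{AV}; the paper instead invokes \cite[Lemma 9.2.1]{AV}, which is the statement that rational trees contracted under $\phi$ carry no orbifold structure. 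That is precisely why the coarsening along $\phi$ loses no twisted data, and your proposal is incomplete without it.
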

\begin{proof}
Over geometric points, the above diagram looks as follows 
	\[
	\begin{tikzcd}
		{(\phi: (C,x) \rightarrow (C_0,x),f,[u])}\arrow[r, mapsto]\arrow[d, mapsto]& {((C_0,x),[u])}\arrow[d, mapsto]\\
		{((\underline{C},x), \underline{f})}\arrow[r, mapsto] & {((\underline{C}_0,x),\underline{[u']})},
	\end{tikzcd}
	\]
where $\underline{C}$ and $\underline{C_0}$ are the coarse curves of $C$ and $C_0$ respectively.  $((\underline{C}_0,x),\underline{[u']})\in Q_{g,n}^{\text{pre}}(W\sslash S,{d'})$ is the coarse image of $((C_0,x),[u'])\in Q_{g,n}^{\text{pre}}([W^{ss}/S],{d'})$ from (\ref{cd1}). The vertical map on the right is given by forgetting the orbifold structure, while the bottom map is the contraction of rational tails on which $\underline{f}$ is nonconstant; see \cite[diagram 4.4.47]{oh_2021}. Then the lemma follows by an argument similar to the proof of (\ref{cd1}). The only nontrivial claim is $C\cong \underline{C}\times_{\underline{C}_0}C_0$. This follows by noting that rational trees contracted under $\phi$ do not carry any orbifold structure \cite[Lemma 9.2.1]{AV}.
\end{proof}

Now we have all the ingredients to prove properness.
\begin{proof}[Proof of Proposition \ref{Prop1}]
We use the valuative criteria. Let $R$ be a discrete valuation ring over $\C$ with quotient field $K$. Denote $\Delta:= \text{Spec}(R)$ and let $\bullet \in \text{Spec}(R)$ be its unique closed point. Set $\Delta_0:=\Delta\setminus\{\bullet\}=\text{Spec}(K)$.
	
We first prove separatedness. From Lemma \ref{l6},
	\[
	Q^{\epsilon}_{g,n}([E^{ss}/G],\beta)\subset \mathfrak{M}_{g,n}^{\text{pre}}(Y,\beta')\times_{Q_{g,n}^{\text{pre}}(W\sslash S,d')}Q^{\text{pre}}_{g,n}(X^{ss},(d',\beta'')).
	\]
	Let $\{((\phi_i,\underline{C}_i,f_i),((C_0)_i,[u'_i]))\}_{i=1,2}\in Q^{\epsilon}_{g,n}([E^{ss}/G],\beta)(\Delta)$ be two objects which are isomorphic over $\Delta_0$. We will show that they are isomorphic over $\Delta$.
	
	Recall that stability (Definition \ref{stabcon}) allows certain components of the underlying curve to be $\epsilon$-unstable if $(C,f)$ restricted to these components is a stable map to $Y$. Hence, even though our objects are $\epsilon$-stable, $((C_0)_i,[u'_i])$ may not be $\epsilon$-stable. Specifically, such a component $C_0'\subset C_0$ may fail $\epsilon$-stability in one (or more) of the following ways: 
	\begin{enumerate}
		\item $C_0'$ is an unmarked rational tail connected to the rest of the curve at one point with degree less than $1/\epsilon$,
		\item $C_0'$ is an unmarked rational bridge (connected to the rest of the curve at two points) with degree $0$,
		\item $C_0'$ has a base point of length greater than $1/\epsilon$.
	\end{enumerate}
	By condition (\ref{def2.2}) of Definition \ref{def2}, $\phi$ contracts rational tails on which $f$ is non-constant. Hence, by stability condition all rational tails that survive the contraction $\phi$ are $\epsilon$-stable. Hence situation (1) above does not occur. We add $l$ sections $s_i:\Delta\rightarrow C_i\xrightarrow{\phi_i}(C_{0})_{i}$ to these surviving rational tails and rational bridges so they become $0^+$-stable. Therefore we get $((\phi_i,\underline{C}_i,x_i,s_i, f_i),((C_0)_i,x_i,s_i,[u'_i]))\in Q^{0^+}_{g,n+l}([E^{ss}/G],\beta)(\Delta)$.
	
Now we note that $Q^{0^+}_{g,n+l}([E^{ss}/G],\beta)$ is separated. This follows by following the proof of \cite{oh_2021} and noting that the stable (quasi)map moduli $\overline{M}_{g,n}(Y,\beta')$ and  $Q^{0+}_{g,n}(X^{ss},(d',\beta''))$ are proper. Hence, $\{((\phi_i,\underline{C}_i,x_i,s_i, f_i),((C_0)_i,x_i,s_i,[u'_i]))\}_{i=1,2}$ are isomorphic over $\Delta$. Finally, forgetting the added extra sections, we see $ \{((\phi_i,C_i,f_i),((C_0)_i,[u'_i]))\}_{i=1,2} $ are isomorphic over $\Delta$, proving separatedness.

Next, to show completeness, we carefully modify the argument of \cite{oh_2021} to make it work for $\epsilon$-stable case. We take an object 
	\[
	((\phi,(C,x),f),((C_0,x),[u]))\in Q^{\epsilon}_{g,n}([E^{ss}/G],\beta)(\Delta_0)
	\]
	over $\Delta_0$ and our aim is to construct its extension over $\Delta$. 
	
We first construct a family of curves over $\Delta$ by extending stabilization of $C_0$. Note that $((C_0,x),[u])\in Q^{\text{pre}}_{g,n}(X^{ss},(d',\beta''))(\Delta_0)$ can be made $0^+$-stable by adding sections on the rational tails and contracting rational bridges which map constantly under $[u]$. As $Q^{0^+}_{g,n+l}(X^{ss},(d',\beta''))$ is proper, we can extend
	\[
	((C'_0,x,r),[u'])\in Q^{0^+}_{g,n+l}(X^{ss},(d',\beta''))(\Delta_0)
	\]
	to get
		\[
	((\overline{C}'_0,\overline{x},\overline{r}),[\overline{u}'])\in Q^{0^+}_{g,n+l}(X^{ss},(d',\beta''))(\Delta).
	\]
	
Next, we use the curve $\overline{C}'_0$ over $\Delta$ to add sections on $C$. Let $q_i:\Delta\rightarrow \overline{C}'_0$ be distinct sections such that $p_i:\Delta_0\xrightarrow{q_i\vert_{\Delta_0}} \overline{C}'_0\vert_{\Delta_0}\hookrightarrow C$ makes $((C,x,p),f)$ a family of stable maps over $\Delta_0$. As  $\overline{M}_{g,n+l}(Y,\beta')$ is proper, we can extend to get a family of stable maps over $\Delta$. That is, we get,
	\[
	((\overline{C},\overline{x},\overline{p}),\overline{f})\in \overline{M}_{g,n+l}(Y,\beta')(\Delta).
	\]
	Forgetting the sections $p_i$ and contracting $f$-nonconstant rational tails, we get a map $\overline{\phi}:(\overline{C},x)\rightarrow (\overline{C}_0,x)$. Moreover, this map extends the contraction map over $\Delta_0$ since $(\overline{C}_0,x)\vert_{\Delta_0}=(C_0,x)$.
	
	Now that we have a hold over curve $\overline{C}_0$ over $\Delta$ which extends $C_0$, we can start adding sections to stabilize $[u]$. Recall from the discussion above, all the surviving rational tails in $C_0$ are $\epsilon$-stable. However, $C_0$ may have base points of length greater than $1/\epsilon$. Let $m$ be the maximum length of base points. If $m<1/\epsilon$, we take $\epsilon'=\epsilon$. If $m\geq 1/\epsilon$, take $\epsilon'\in \Q$ such that $1/(m+1)<\epsilon'<1/m$. Our aim is to make $[u]:C_0\rightarrow X$ $\epsilon'$-stable. By definition, lengths of all base points are less than $1/\epsilon'$. If the degree $d$ of any rational tail satisfies $1/\epsilon<d\leq 1/\epsilon'$, we add a section on that component. Similarly, we add a section on any $[u]$-constant rational bridge. 
	
	Following the discussion above, let $q:\Delta\rightarrow \overline{C}_0$ be the added distinct sections such that,
	\[
	((C_0, x, q), [u])\in Q^{\epsilon'}_{g,n'}(X^{ss}, (d',\beta''))(\Delta_0).
	\]
	As $Q^{\epsilon'}_{g,n'}(X^{ss}, (d',\beta''))(\Delta_0)$ is proper, we get its extension
	\[
	((\overline{C}_0, \overline{x}, \overline{q}), [\overline{u}_0])\in Q^{\epsilon'}_{g,n'}(X^{ss}, (d',\beta''))(\Delta).
	\]
	
The data $((\overline{\phi}, (\overline{C},\overline{x}),\overline{f}),((\overline{C}_0,\overline{x}),[\overline{u}_0]))$ form a candidate for the extension over $\Delta$. However, the central fiber may still have some problematic rational components which may map constantly under both $\overline{f}$ and $[\overline{u}_0]$. More importantly, as $((\overline{C}_0, \overline{x}, \overline{q}), [\overline{u}_0])$ is an $\epsilon'$-stable extension, we may have base points of length greater than $1/\epsilon$ on $\overline{f}$-constant rational components. 

We first contract the problematic rational tails. To be precise, contract rational tails  $C''$ (if any) in the central fiber such that map $\overline{f}\vert_{C''}$  and $[[\overline{u}_0]\circ\overline{\phi}]\vert_{C''}$ are constant (using the line bundle (\ref{linebun}) to stabilize). Let $\tilde{C}$ and $\tilde{C}_0$ be the respective curves after such contraction and $\tilde{\phi}$ the contraction map between them. Define $\tilde{f}$ and $[\tilde{u}_0]$ such that the following diagrams commute,
\[
\begin{tikzcd}
	{\overline{C}}\arrow[r] \arrow[d,"\overline{f}"'] & {\tilde{C}}\arrow[dl, "\tilde{f}"] &  & {\overline{C}_0}\arrow[r] \arrow[d,"{[\overline{u}_0]}"'] & {\tilde{C}_0}\arrow[dl, "{[\tilde{u}_0]}"] \\
	{Y} &  & & {X}.
\end{tikzcd}
\]

Finally, we tweak the map $[\tilde{u}_0]$ (on the central fiber) to make the extension $\epsilon$-stable. Let $((\tilde{\phi}, (\tilde{C},\overline{x}),\tilde{f}),((\tilde{C}_0,\overline{x}),[\tilde{u}_0]))$ be the object obtained from the above procedure. By prestability, base points are away from nodes and marked points. Hence, we can assume the underlying curve $\tilde{C}(\Delta_0)$ is smooth over $\Delta_0$ (shrinking $\Delta$ if necessary). After extension (and forgetting the stabilizing sections), curve over the central fiber $\tilde{C}(\bullet)$ may be nodal. 

Now we divide the argument into two cases.

\noindent\textit{Case 1}: If the map $(\tilde{C}(\bullet),\tilde{f}(\bullet))$ over the closed point $\bullet$ is stable, define \[((\hat{\phi}, (\hat{C},\hat{x}),\hat{f}),((\hat{C}_0,\overline{x}),[\tilde{u}])):=((\tilde{\phi}, (\tilde{C},\overline{x}),\tilde{f}),((\tilde{C}_0,\overline{x}),[\tilde{u}_0])).\] 

\noindent\textit{Case 2}: If the map $(\tilde{C}(\bullet),\tilde{f}(\bullet))$ from the central fiber is unstable, properness of $\overline{M}_{g,n}(Y,\beta')$ implies $(\tilde{C}(\Delta_0),\tilde{f}(\Delta_0))$ is also not stable. By stability, this means that the contraction map $\tilde{\phi}(\Delta_0)$ is an isomorphism and $(\tilde{C}_0(\Delta_0), [\tilde{u}_0](\Delta_0))$ is $\epsilon$-stable. As $Q^{\epsilon}_{g,n}(X^{ss},(d',\beta''))$ is proper, by extending we get $(\hat{C}_0,[\hat{u}])\in Q^{\epsilon}_{g,n}(X^{ss},(d',\beta''))(\Delta)$. Define $\hat{\phi}:\hat{C}\cong\hat{\underline{C}}_0$ (coarse curve of $\hat{C}_0$). Note that $\hat{C}_0$ may have extra components not present in $\tilde{C}$ (extension may add rational component of degree $1/\epsilon<d<1/\epsilon'$ over central fiber). Define $\hat{f}:\hat{C}\rightarrow Y$ such that the following diagram commutes,
 \[
 \begin{tikzcd}
 	{\hat{C}}\arrow[r] \arrow[d,"\hat{f}"'] & {\tilde{C}}\arrow[dl, "\tilde{f}"] \\
 	{Y},
 \end{tikzcd}
 \]
where the top map is the contraction of added rational tails over the closed point $\bullet$. 

\begin{claim} 
$((\hat{\phi}, (\hat{C},\hat{x}),\hat{f}),((\hat{C}_0,\hat{x}),[\hat{u}]))\in Q^{\epsilon}_{g,n}([E^{ss}/G],\beta)(\Delta)$ is the required extension. 
\end{claim}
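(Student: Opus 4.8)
The plan is to verify that the object $((\hat{\phi}, (\hat{C},\hat{x}),\hat{f}),((\hat{C}_0,\hat{x}),[\hat{u}]))$ constructed above satisfies the three conditions defining a prestable quasimap (Definition \ref{def2}), that it is $\epsilon$-stable in the sense of Definition \ref{stabcon}, and that it restricts to the given object over $\Delta_0$; separatedness, already established, then gives uniqueness of the extension. First I would check that the data assembled indeed forms a prestable morphism to $[E/G]$ of type $(g,n,\beta)$: the curve $\hat{C}$ is a twisted curve (its orbifold structure comes only from $\hat{C}_0$, since by \cite[Lemma 9.2.1]{AV} the $\overline{f}$-nonconstant rational tails contracted by $\hat\phi$ carry no orbifold structure, so $\hat{C}\cong\hat{\underline{C}}_0\times_{?}\hat{C}_0$ in the relevant sense), $\hat{f}$ has degree $\beta'$ and $[\hat{u}]$ has class $\tilde\beta$ because degrees are locally constant in families and are preserved under the contractions and extensions used, and the matching $Q_{\hat f}\cong Q_{[\hat u]}$ of principal $S$-bundles holds over $\Delta_0$ hence over all of $\Delta$ by separatedness of the moduli of such bundles (equivalently, because both are pulled back from the same map to $[W/S]$). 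Conditions (2) and (3) of Definition \ref{def2} must be checked on the central fiber: that $\hat f$ is nonconstant exactly on the rational trees contracted by $\hat\phi$ — which holds by construction of $\hat\phi$ as the contraction of $\hat f$-nonconstant rational tails followed by the coarse-space identification — and that the base locus $[\hat u]^{-1}[\tilde V^{\mathrm{us}}/(S\times G)]$ is finite and away from nodes and markings, which follows since $[\hat u]$ arises from an object of $Q^{\epsilon}_{g,n}(X^{ss},(d',\beta''))(\Delta)$ (Case 2) or from the already-prestable $[\tilde u_0]$ (Case 1).

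Next I would verify $\epsilon$-stability. By Definition \ref{stabcon} it suffices to check, component by component on the central fiber $\hat C$, that either $\hat f$ restricted to the component is a stable map to $Y$ or the associated quasimap to $X^{ss}$ is $\epsilon$-stable in the sense of Definition \ref{def5}. In Case 1 this is essentially immediate: $(\tilde C(\bullet),\tilde f(\bullet))$ is stable, and the surviving $\epsilon'$-stable quasimap $[\tilde u_0]$ has, after the contraction of doubly-constant rational tails, no base points of length $>1/\epsilon$ on $\hat f$-constant components (the only way length $>1/\epsilon$ could survive is on an $\hat f$-nonconstant component, where stability is witnessed by $f$ instead). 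In Case 2 the contraction map $\hat\phi(\Delta_0)$ is an isomorphism, so $\hat f(\Delta_0)$ is constant on every rational tail, forcing stability of the whole quasimap $(\hat C_0,[\hat u])$ to come from $\epsilon$-stability of $[\hat u]$ itself, which is guaranteed because it was obtained by extension inside the proper moduli $Q^{\epsilon}_{g,n}(X^{ss},(d',\beta''))$; the extra rational components that the extension may have introduced over $\bullet$ have degree strictly between $1/\epsilon$ and $1/\epsilon'$, hence are $\epsilon$-unstable rational tails for the quasimap but are precisely the ones $\hat\phi$ contracts, so on $\hat C$ they are replaced by $\hat f$-nonconstant tails which are stable maps to $Y$. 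One must also confirm the ampleness condition (1) of Definition \ref{def5} on the coarse curve $\underline{\hat C}$, which follows by combining ampleness of $\omega_{\underline C}(\sum\underline x_i)$ on stabilized components with positivity of $\varphi_*([u]^*L_\theta)$ contributed by base points and by $\hat f$ via the ample line bundle $L$ of \eqn{linebun}; this is the standard bookkeeping that rational tails and bridges have been arranged to be stable exactly when one of the two line-bundle positivities kicks in.

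Finally I would check that $((\hat\phi,(\hat C,\hat x),\hat f),((\hat C_0,\hat x),[\hat u]))|_{\Delta_0}$ agrees with the original object. This is traced through the construction: over $\Delta_0$ the curve $\tilde C(\Delta_0)$ is smooth and equals $C$ (all stabilizing sections were added and then forgotten, and the contraction of doubly-constant tails is trivial over $\Delta_0$), and in Case 2 the extension inside $Q^{\epsilon}_{g,n}(X^{ss},(d',\beta''))$ does not alter the generic fiber, so $[\hat u](\Delta_0)=[\tilde u_0](\Delta_0)=[u](\Delta_0)$ and $\hat f(\Delta_0)=f(\Delta_0)$; in Case 1 this is even more direct. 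Uniqueness of the extension is then \emph{not} a further obstacle because it is exactly the separatedness of $Q^{\epsilon}_{g,n}([E^{ss}/G],\beta)$ proven in the first half of the proof. The main obstacle I anticipate is the $\epsilon$-stability verification on the central fiber in Case 2, specifically controlling the new rational components the $\epsilon$-stable extension of $[u]$ may introduce: one must see that each such component has degree in $(1/\epsilon,1/\epsilon']$ — so that on the quotient side it would have been $\epsilon$-unstable, justifying why we passed to $\epsilon'$ in the first place — and that after transporting to $\hat C$ via $\hat\phi$ it becomes an $\hat f$-nonconstant rational tail and hence a stable map to $Y$; matching these two descriptions of the same geometric object, and checking no base point of length $>1/\epsilon$ lands on an $\hat f$-constant component, is the delicate point.
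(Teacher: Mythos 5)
Your overall strategy (verify prestability, verify $\epsilon$-stability componentwise using completeness of $Q^{\epsilon}_{g,n}(X^{ss},(d',\beta''))$ and of $\overline{M}_{g,n}(Y,\beta')$, check the restriction to $\Delta_0$, and get uniqueness from the separatedness proved earlier) is the same as the paper's, but there is a genuine gap at precisely the point where the paper's proof does its real work: the verification of condition (2) of Definition \ref{def2} on the central fiber. You assert that ``$\hat f$ is nonconstant exactly on the rational trees contracted by $\hat\phi$'' holds ``by construction,'' but the construction contracts tails that are $f$-nonconstant on the \emph{generic} fiber, whereas prestability must hold fiberwise at $\bullet$. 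The problematic case is a rational tail $C''$ with $C''(\Delta_0)$ being $\hat f(\Delta_0)$-nonconstant but $C''(\bullet)$ being $\hat f(\bullet)$-constant: the contraction $\hat\phi$, being defined over all of $\Delta$, still contracts $C''(\bullet)$, so condition (2) would fail at the central fiber unless such tails are shown not to occur. The paper rules them out by noting that such a $C''(\bullet)$ would be constant both under $\hat f(\bullet)$ and under $[[\hat u]\circ\hat\phi](\bullet)$, and these doubly constant tails are exactly the ones contracted in passing from $\overline{C}$ to $\tilde{C}$; your proposal never connects the $\overline{C}\to\tilde{C}$ contraction to this check, so the central-fiber verification is missing.

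There is also a localized error in your Case 2 stability discussion. The extra rational components of degree $d$ with $1/\epsilon<d<1/\epsilon'$ that the $\epsilon$-stable extension $[\hat u]$ may introduce over $\bullet$ are \emph{not} ``$\epsilon$-unstable rational tails \ldots precisely the ones $\hat\phi$ contracts, \ldots replaced by $\hat f$-nonconstant tails'': in Case 2 the map $\hat\phi:\hat{C}\cong\hat{\underline{C}}_0$ is an isomorphism and contracts nothing, $\hat f$ is constant on those components (it is defined by contracting them to $\tilde{C}$ and composing with $\tilde f$), and degree $>1/\epsilon$ makes them $\epsilon$-stable as components of the $\epsilon$-stable object $(\hat{C}_0,[\hat u])\in Q^{\epsilon}_{g,n}(X^{ss},(d',\beta''))(\Delta)$ --- which is exactly how their stability is witnessed. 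The correct core of your argument (stability on $\hat f$-unstable components comes from $\epsilon$-stability of $[\hat u]$, guaranteed by extending inside the proper moduli of quasimaps to $X^{ss}$) is the paper's componentwise argument, so this sentence should simply be corrected rather than relied upon; but the missing condition-(2) verification above is the substantive gap.
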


Indeed, by construction $((\hat{\phi},(\hat{C},\hat{x}),\hat{f}),((\hat{C}_0,\hat{x}),[\hat{u}]))\vert_{\Delta_0}=((\phi, (C,x),f),(C_0, x),[u])$. We start by checking that $\hat{\phi}$ is the contraction of $\hat{f}$-nonconstant rational tails. Let $C''(\Delta)$ be a component of $\hat{C}(\Delta)$ such that over the central point $C''(\bullet) $ is a $\hat{f}(\bullet)$-nonconstant tail. This implies $C''(\Delta_0)$ is $\hat{f}(\Delta_0)$-nonconstant (shrinking if necessary) and $\hat{\phi}(\Delta_0)$ contracts the tail. Hence, by extension $\hat{\phi}(\bullet)$ contracts $C''(\bullet)$ too. Likewise, if the central fiber $C''(\bullet) $ is $\hat{f}(\bullet)$-constant and $C''(\Delta_0)$ is $\hat{f}(\Delta_0)$-constant, we get that $\hat{\phi}(\Delta)\vert_{C''}$ is an isomorphism. The only non-trivial case is when $C''(\Delta_0)$ is $\hat{f}(\Delta_0)$-nonconstant but its limit $C''(\bullet) $ is $\hat{f}(\bullet)$-constant. Here as $C''(\Delta_0)$ is $\hat{f}(\Delta_0)$-nonconstant, $\hat{\phi}(\Delta_0)$ contracts $C''(\Delta_0)$ and by extension $\hat{\phi}(\bullet)$ contracts $C''(\bullet)$. Hence, $C''(\bullet)$ is $[[\hat{u}]\circ\hat{\phi}](\bullet)$-constant. By assumption, $C''(\bullet)$ is also $\hat{f}(\bullet)$-constant. Such tails cannot occur as these are exactly the rational tails that we contract to get $\tilde{C}$ from $\overline{C}$. Therefore we have \[((\hat{\phi},(\hat{C},\hat{x}),\hat{f}),((\hat{C}_0,\hat{x}),[\hat{u}]))\in\mathfrak{M}_{g,n}^{\text{pre}}(Y,\beta')\times_{Q_{g,n}^{\text{pre}}(W\sslash S,d')}Q^{\text{pre}}_{g,n}(X^{ss},(d',\beta''))(\Delta).\] 

It remains to check $\epsilon$-stability. Let $C''$ be an irreducible component of $\hat{C}$ and let $C''_0$ be its corresponding image under $\hat{\phi}$. If the curve over central fiber $C''(\bullet)$ is $\hat{f}(\bullet)$-stable, we are done. So let us assume that $C''(\bullet)$ is $\hat{f}(\bullet)$-unstable. We want to show that $C''_0(\bullet)$ is $\epsilon$-stable. As $C''(\bullet)$ is $\hat{f}(\bullet)$-unstable, $C''(\Delta_0)$ is $\hat{f}(\Delta_0)$-unstable, then $C''_0(\Delta_0)$ is $\epsilon$-stable and by completeness of $Q^{\epsilon}_{g,n}(X^{ss},(d',\beta''))$, we get that the central fiber $C''_0(\bullet)$ is $\epsilon$-stable.
\end{proof}

\subsection{Obstruction Theory}\label{obthr}
	Let $\mathfrak{M}_{g,n}^{tw}$ be the stack of $n$-pointed genus $g$ twisted curves. It is a smooth Artin stack, locally of finite type \cite[Theorem 1.10]{olsson_2007}. Let $\mathfrak{C}\rightarrow \mathfrak{M}_{g,n}^{tw}$ be its universal curve. Define 
	\[
	\mathfrak{Bun}_{G}^{tw}:=\text{Hom}_{\mathfrak{M}_{g,n}^{tw}}(\mathfrak{C}, BG\times \mathfrak{M}_{g,n}^{tw}),
	\]
	which is smooth over $\mathfrak{M}_{g,n}^{tw}$ \cite[Section 2.4.5]{CCK}. We have a fiber diagram of forgetful morphisms,
	\[
	\begin{tikzcd}
		{Q^{\text{pre}}_{g,n}(X^{ss},(d',\beta''))}\arrow[r]\arrow[d]&{Q^{\text{pre}}_{g,n}([V^{ss}/(S\times G)],\beta'')}\arrow[d]\\
		{Q^{\text{pre}}_{g,n}([W^{ss}/S],d')}\arrow[r]&{\mathfrak{Bun}_{S}^{tw}}.
	\end{tikzcd}
	\]
	Combining this with diagram (\ref{cd1}), we get the following fibered diagram
	\begin{equation}
			\begin{tikzcd}
			{Q^{\text{pre}}_{g,n}([E^{ss}/G],\beta)}\arrow[r,"p_1"]\arrow[d,"p_2"]&{Q^{\text{pre}}_{g,n}([V^{ss}/(S\times G)],\beta'')}\arrow[d,"\mu_1"]\\
			{\mathfrak{M}_{g,n}^{tw}(Y,\beta')}\arrow[r,"\mu_2"]&{\mathfrak{Bun}_{S}^{tw}}.
		\end{tikzcd}
	\end{equation}
The map $\mu_1$ can be factored as 
\[
Q^{\text{pre}}_{g,n}([V^{ss}/(S\times G)],\beta'')\xrightarrow{\mu_1^1}\mathfrak{Bun}_{S\times G}^{tw}\xrightarrow{\mu_1^2}\mathfrak{Bun}_{S}^{tw}.
\]
Let the complex $E^{\bullet}_1\in D^b(Q^{\text{pre}}_{g,n}([V^{ss}/(S\times G)],\beta''))$ be the $\mu_1^1$-relative perfect obstruction theory, see \cite[Section 2.4.5]{CCK} for its construction. Define a complex (see e.g. \cite[Remark 4.5.3]{ckm2014stable})
\[
E^{\bullet}_{\mu_1}:=\text{Cone}(E^{\bullet}_1[-1]\rightarrow \mathbb{L}_{\mu_1^1}[-1]\rightarrow (\mu_1^1)^*\mathbb{L}_{\mu_1^2})\in D^b(Q^{\text{pre}}_{g,n}([V^{ss}/(S\times G)],\beta'')),
\]
where $\mathbb{L}_{\mu_1^1}, \mathbb{L}_{\mu_1^2}$ are cotangent complexes for $\mu_1^1,\mu_1^2$ respectively.

Next consider the composition
\[
\mu':\mathfrak{M}_{g,k}^{tw}(Y,\beta')\xrightarrow{\mu_2} \mathfrak{Bun}_{S}^{tw}\xrightarrow{\mu_3} \text{Spec}(\C).
\]
This composition factors through the stack of twisted curves via a forgetting map,
\[
\mathfrak{M}_{g,k}^{tw}(Y,\beta')\xrightarrow{\mu'_2} \mathfrak{M}_{g,k}^{tw}\xrightarrow{\mu'_3} \text{Spec}(\C).
\]
Let $E'^{\bullet}_2\in D^b(\mathfrak{M}_{g,k}^{tw}(Y,\beta'))$ be the relative perfect obstruction theory for $\mu'_2$. Such an obstruction theory exists by \cite[Section 4.5]{AGV}. Define 
\[
E^{\bullet}_{\mu'}:=\text{Cone}(E^{\bullet}_2[-1]\rightarrow \mathbb{L}_{\mu'_{2}}[-1]\rightarrow (\mu'_2)^*\mathbb{L}_{\mathfrak{M}_{g,k}^{tw}})\in D^b(\mathfrak{M}_{g,k}^{tw}(Y,\beta')).
\]
Using this, a perfect obstruction theory $E^{\bullet}_{\mu_2}$  relative to $ \mathfrak{Bun}_{S}^{tw} $ can be constructed using an argument similar to \cite{oh_2021}. We present the details in our situation. 

Let $\mathfrak{C}$ be the universal curve on $\mathfrak{M}_{g,k}^{tw}(Y,\beta')$. Let $\Phi:\mathfrak{C}\rightarrow \mathfrak{C_0}$ be the contraction of $f$-nonconstant rational tails. Similar to \cite{oh_2021}, we have a commutative diagram
\[
\begin{tikzcd}
	{T_{\mathfrak{C}}(-\sum_{i}p_i)}\arrow[r]\arrow[d,"\nu_1"']&{f^*T_Y}\arrow[d, "\nu_2"]\\
	{\Phi^*T_{\mathfrak{C_0}}(-\sum_{i}p_i)} \arrow[r]& {\Phi^*T_{BS}\vert_{\mathfrak{C_0}}}.
\end{tikzcd}
\]
Define the complex $E'^{\bullet}$ to be the dual derived pushforward of 
\[
\text{Cone}(\nu_1)[-1]\rightarrow f^*T_Y.
\]
As the composition $\text{Cone}(\nu_1)[-1]\rightarrow f^*T_Y\xrightarrow{\nu_2}  \Phi^*T_{BS}\vert_{\mathfrak{C_0}}$ is zero in the derived category, we have a morphism 
\[
\text{Cone}(\text{Cone}(\nu_1)[-1]\rightarrow f^*T_Y)\rightarrow \Phi^*T_{BS}\vert_{\mathfrak{C_0}}.
\]
This induces the dual morphism $\mu_2^*\mathbb{L}_{\mathfrak{Bun}_{S}^{tw}}\rightarrow E'^{\bullet}$ such that the following diagram commutes,
\[
\begin{tikzcd}
	{\mu_2^*\mathbb{L}_{\mathfrak{Bun}_{S}^{tw}}}\arrow[r]\arrow[d]&{E'^{\bullet}}\arrow[dl]\\
	{\mathbb{L}_{\mathfrak{M}_{g,k}^{tw}(Y,\beta')}}.
\end{tikzcd}
\]
Define a complex
\[
E^{\bullet}_{\mu_2}:=\text{Cone}(\mu_2^*\mathbb{L}_{\mathfrak{Bun}_{S}^{tw}}\rightarrow E'^{\bullet})\in D^b(\mathfrak{M}_{g,k}^{tw}(Y,\beta')).
\]
We define 
\[
E^{\bullet}:=(p_1^*E^{\bullet}_{\mu_1}\oplus p_2^*E^{\bullet}_{\mu_2})\vert_{Q^{\epsilon}_{g,k}([E/G],\beta)}\in D^b(Q^{\epsilon}_{g,k}([E^{ss}/G],\beta))
\]
as the relative perfect obstruction theory for $Q^{\epsilon}_{g,k}([E^{ss}/G],\beta)$ over $\mathfrak{Bun}_{S}^{tw}$.

\begin{remark}
When $\epsilon$ is large, we have seen that $Q^{\epsilon}_{g,k}([E^{ss}/G],\beta)$ is isomorphic to a moduli stack $\mathfrak{M}^{tw}_{g,k}([E^{ss}/G],\beta)$ of stable maps to $[E^{ss}/G]$. In this case, the {\em absolute} perfect obstruction theory obtained from $E^\bullet$ above can be seen to coincide with the usual {\em absolute} perfect obstruction theory for stable maps \cite{AGV}, using the short exact sequence relating the tangent bundles $T_{[E^{ss}/G]}$ and $T_Y$.
\end{remark}

\subsection{Evaluation map}\label{evmaps} Let $I_{\mu}(X^{ss})$ be the cyclotomic inertia stack of $X$. Define 
\[
I_{\mu}([E^{ss}/G]):=Y\times_{[W^{ss}/S]}I_{\mu}(X^{ss})
\]
to be the corresponding stack for our target space. To define an evaluation map from $Q^{\epsilon}_{g,k}([E^{ss}/G],\beta)$ to $I_{\mu}([E^{ss}/G])$, consider the universal curve $\mathcal{C}$  over $Q^{\epsilon}_{g,k}([E^{ss}/G],\beta)$ and the universal morphism $(\mathcal{F},\mathcal{U}):\mathcal{C}\rightarrow [E/G]$. Restriction of these maps to the marked points gives the desired evaluation map, which we represent as a pair 
\[
ev:=(ev',\tilde{ev}):Q^{\epsilon}_{g,k}([E^{ss}/G],\beta)\rightarrow I_{\mu}([E^{ss}/G]).
\]
Note that since marked points are away from base points, evaluation maps take values in $I_\mu([E^{\text{ss}}/G])$.

\subsection{Quasimaps to \boldmath \texorpdfstring{$\tilde{\P}$}{P}} \label{sec2.9}
We end this section with moduli of $\epsilon$-stable quasimaps to a space with $\P^N$ as its fibers and construct contraction maps on them. 

Let $Y$ be a nonsingular projective variety as before. For $N\geq 0$, consider an example where $V=\C^{N+1}$ and $G=\C^*$. Then $E\rightarrow Y$ is a total space with $\C^{N+1}$ as its fibers. Let ($\psi:Y\rightarrow \prod_{j=1}^r\P^{n_j},(\C^*)^r\times \C^*$-action on $\C^{N+1}, m\in\Z_{>0}$) be its presentation \cite[Section 4.2]{oh_2021}. Let $W:= \prod_{j=1}^r\C^{n_j+1}$ and $\tilde{V}=W\times \C^{N+1}$ be as before. Define $$\tilde{\P}:=[E^{ss}/G]\rightarrow Y.$$ 
The fibers of this map are $[(\C^{N+1}\backslash\{\overrightarrow{0}\})/\C^*]$. 

Consider the moduli stack $Q^{\epsilon}_{g,n}(\tilde{\P},\beta)$ of $\epsilon$-stable quasimaps to $\tilde{\P}$ of type $(g,n,\beta)$. From above, it is a proper Deligne--Mumford stack of finite type with a virtual fundamental class denoted by $[Q^{\epsilon}_{g,n}(\tilde{\P},\beta)]^{\text{vir}}$. 

For a positive integer $d_0$, let $$\epsilon_0=1/d_0$$ be a wall. Denote the chamber to the right as $$\epsilon_{+}:=(1/d_0,1/(d_0-1))$$ and similarly the chamber to the left as $$\epsilon_{-}:=(1/(d_0+1),1/d_0).$$ Consider a closed point in $Q^{\epsilon_+}_{g,n}(\tilde{\P},\beta)$,
\[
(\phi:C\rightarrow C_0,f:C\rightarrow Y,[u]:C_0\rightarrow [\tilde{V}/((\C^*)^r\times \C^*)]).
\]
Note that by condition (v) from the definition of presentation for $\tilde{\P}$ in Section \ref{secTS}, 
\[
\tilde{V}^{\text{ss}}((\C^*)^r\times \C^*,\tilde{\theta})=\prod_{j=1}^r(\C^{(n_j+1)}\backslash\{0\})\times (\C^{N+1}\backslash\{0\}).
\]
Thus we get a map from (semi-)stable locus to product of $ \P^n $'s;
\[
[\tilde{V}^{\text{ss}}/(S\times G)]\rightarrow \prod_{j=1}^r\P^{n_j}\times \P^N.
\]
From the stability condition, the map $[u]$ restricted to any rational tail of $C_0$ is $\epsilon_{+}$-stable. Moreover, $f$ is constant on the preimage of these tails in $C$. Consider contraction $C_0\rightarrow C'_0$ of rational tails having degree $(0,d_0)$ \cite[Section 3.2.2]{CKwallcross}, \cite{toda_2011}, \cite{MM}. The image of such a contraction map 
\[
(\phi':C\rightarrow C'_0,f:C\rightarrow Y,[u']:C'_0\rightarrow [W\times\C^{N+1}/((\C^*)^r\times \C^*)])
\] 
lies in $Q^{\epsilon_-}_{g,n}(\tilde{\P},\beta)$. This can be extended functorially over families of quasimaps to give a map
\begin{equation}
	c:Q^{\epsilon_+}_{g,n}(\tilde{\P},\beta)\rightarrow Q^{\epsilon_-}_{g,n}(\tilde{\P},\beta)
\end{equation}
By taking composition of such morphism, we have
\begin{equation}\label{eq9}
	c_{\epsilon}:Q^{\epsilon}_{g,n}(\tilde{\P},\beta)\rightarrow Q^{0+}_{g,n}(\tilde{\P},\beta).
\end{equation}
for every $\epsilon>0$.

Similarly, we can replace the last $k$ marked points by base points of length $d_0$  \cite[Section 3.2.3]{CKwallcross}. This map may convert certain rational components into rational tails. Hence, we take the $0^+$-stabilization of the obtained quasimaps. Let $C'\subset C$ be such a rational tail connected to the rest of the curve at $p$. Let $C'_0\subset C_0$ be its corresponding image under the contraction map $\phi$. If $f|_{C'}$ is non-constant, then we contract $C_0'$ from $C_0$ and replace $p$ by a base point of length equal to the degree of $C_0'\subset C_0$. If $f|_{C'}$ is constant, we contract both $C'\subset C$ and $C'_0\subset C_0$. Latter contraction gives a base point of length equal to the degree of $C_0'$. In short, for every $\epsilon>0$, we have a map
\begin{equation}\label{eq11}
b_{k,\epsilon}:Q^{\epsilon}_{g,n+k}(\tilde{\P},\beta)\rightarrow Q^{0+}_{g,n}(\tilde{\P},\beta+k(0,d_0)).
\end{equation}


\section{Master Spaces}\label{sec:master_space}
The purpose of this Section is to construct master spaces. With the goal of studying the behavior as the stability parameter crosses a wall $\epsilon_0=1/d_0$, master spaces should include $\epsilon_\pm$-stable quasimaps, where $\epsilon_-<\epsilon_0<\epsilon_+$. Roughly speaking, the effect of crossing the wall $\epsilon_0$ from the right to the left is to replace degree-$d_0$ rational tails by length-$d_0$ base points. Therefore, master spaces should include quasimaps with both degree-$d_0$ rational tails and length-$d_0$ base points. The way to control the transition from rational tails to base points and to avoid objects with infinitely many automorphisms, as discovered in \cite{YZ}, is to used the notion of {\em entangled tails}. We refer to \cite[Section 1.13]{YZ} for a more in-depth discussion about this.

\subsection{Weighted Twisted Curves} 
Let $d_0,\epsilon_+,\epsilon_-$ be as above. Fix non-negative integers $g,n$ and a pair of non-negative integers $d:=(d',d'')$, such that $2g-2+n+\epsilon_0d''\geq 0$. When $g=n=0$, we require the inequality to be strict, i.e. $\epsilon_0d''>2$. For $n$-marked twisted curves of genus $g$, we assign to each irreducible component $C$ a pair of non-negative integers $(d'_C,d''_C)$ such that $\sum_C(d'_C,d''_C)=(d',d'')$. We refer to $(d',d'')$ as the degree of the curve. We will see that such an assignment is continuous in a sense that over a family of curves it is locally given by the degree of some line bundle. Such a decorated twisted curve is referred as $n$-marked weighted twisted curve of genus $g$ and degree $d$ \cite{Cos}, \cite{HL10}. The moduli stack of such weighted curves is denoted by $\mathfrak{M}^{\text{wt}}_{g,n,d}$.

\begin{definition}
	We define the stack of \textit{$(\epsilon_0)$-semistable weighted curves} to be the open substack $\mathfrak{M}^{\text{wt},\text{ss}}_{g,n,d}\subset \mathfrak{M}^{\text{wt}}_{g,n,d}$ defined by the following conditions:
	\begin{itemize}
		\item the curve has no degree-$(0,0)$ rational bridge,
		\item the curve has no rational tail of degree strictly less than $(0,d_0)$ or strictly greater than $(0,\infty)$ in dictionary (lexicographic) order.
	\end{itemize}
\end{definition}

\subsection{Entangled tails} 
Here we discuss entangled tails briefly and refer readers to \cite[Section 2.2]{YZ} for a more detailed treatment. Let $m:=\lfloor d''/d_0\rfloor$ be the maximum number of degree-$(0,d_0)$ rational tails. Set 
\[ \mathfrak{U}_m=\mathfrak{M}^{\text{wt},\text{ss}}_{g,n,d}.\]
Let $\mathfrak{Z}_i\subset \mathfrak{U}_m$ be the reduced substack parameterizing curves with at least $i$ rational tails of degree $(0,d_0)$. $\mathfrak{Z}_m$ is the deepest stratum and hence smooth. Define \[\mathfrak{U}_{m-1}\rightarrow \mathfrak{U}_{m}\] 
to be the blowup along $\mathfrak{Z}_m$ and let $$\mathfrak{C}_{m}\subset \mathfrak{U}_{m-1}$$ be the exceptional divisor. Inductively let $\mathfrak{Z}_{(i)}\subset \mathfrak{U}_i$ be the proper transform of $\mathfrak{Z}_i$ and let 
\[\mathfrak{U}_{i-1}\rightarrow \mathfrak{U_i}\] 
be the blowup along $\mathfrak{Z}_{(i)}$. Set $\tilde{\mathfrak{M}}_{g,n,d}=\mathfrak{U}_0$.

\begin{definition}
	$\tilde{\mathfrak{M}}_{g,n,d}$ is called the moduli stack of \textit{genus $g$, $n$-marked, $\epsilon_0$-semistable curves of degree $d$ with entangled tails}.
\end{definition}

For a scheme $R$, an $R$-family of semistable curves with entangled tails corresponding to a morphism $e:R\rightarrow \tilde{\mathfrak{M}}_{g,n,d}$ is denoted by 
\[(\pi:C\rightarrow R, x,e),\] 
where $(\pi:C\rightarrow R,x)$ is the family of $n$-marked weighted twisted curves associated with $R\xrightarrow{e} \tilde{\mathfrak{M}}_{g,n,d}\rightarrow\mathfrak{M}_{g,n,d}^{\text{wt},\text{ss}}$.

Let 
\[\xi=(\pi:C\rightarrow\text{Spec}K,x,e)\]
be a geometric point of $\tilde{\mathfrak{M}}_{g,n,d}$. Let $E_1,\cdots,E_l$ be the degree $(0,d_0)$ rational tails in $C$. Locally let $\mathfrak{h}_i$ be the locus where $E_i$ remains a rational tail. Then near the image of $\xi$, the analytic branches of $\mathfrak{Z}_1\subset\mathfrak{M}^{\text{wt},\text{ss}}_{g,n,d}$ are \[\mathfrak{h}_1,\cdots,\mathfrak{h}_l.\]
Let $ \mathfrak{h}_{i,k} $ be the proper transform of $\mathfrak{h}_i$ in $\mathfrak{U}_k$, where 
\[
k=\text{min}\{i\text{ }\vert\text{ the image of }\xi \text{ in } \mathfrak{U}_i \text{ lies in } \mathfrak{Z}_{(i)}\}.
\]
There exists a unique subset $\{i_1,\cdots,i_k\}\subset\{1,\cdots,l\}$ such that the image of $\xi$ in $\mathfrak{U}_k$ lies in the intersection of $\mathfrak{h}_{i_1,k},\cdots,\mathfrak{h}_{i_k,k}$.

\begin{definition}
With the notation above, we call $E_{i_1},\cdots, E_{i_k}$ the \textit{entangled tails} of $\xi$.
\end{definition}
Examples of entangled tails can be found in \cite[Section 2.3]{YZ}, specifically \cite[Example 2.3.1]{YZ}.

Next we define a gluing morphism which will be useful later. Fix $k\geq 1$ and let $ \mathfrak{M}^{\text{wt,ss}}_{g,n+k,d-kd_0}$ and $\mathfrak{M}^{\text{wt,ss}}_{0,1,d_0}$ be stacks of  $\epsilon_0$-semistable weighted curves. Note that we denote the pair $(0,d_0)$ as $d_0$ to lighten the notation and hereafter we will use $d_0=(0,d_0)$ wherever the context makes it clear. Define a gluing morphism
\[
\mathfrak{gl}_k: \mathfrak{M}^{\text{wt,ss}}_{g,n+k,d-kd_0}\times'(\mathfrak{M}^{\text{wt,ss}}_{0,1,d_0})^k\rightarrow \mathfrak{Z}_k\subset \mathfrak{M}^{\text{wt,ss}}_{g,n,d},
\]
where the product $\times'$ is formed by matching the sizes of automorphism groups at the last $k$ markings. These orders of automorphism groups at the $(n+i)$-th marked point define locally constant morphisms for each $i=1,\cdots,k$ denoted by 
\[
\mathfrak{r}_i:\mathfrak{M}^{\text{wt,ss}}_{g,n+k,d-kd_0}\times'(\mathfrak{M}^{\text{wt,ss}}_{0,1,d_0})^k\rightarrow \N.
\]
As $\mathfrak{M}^{\text{wt,ss}}_{g,n+k,d-kd_0}\times'(\mathfrak{M}^{\text{wt,ss}}_{0,1,d_0})^k$ is smooth, $\mathfrak{gl}_k$ factors through $\mathfrak{Z}_k^{\text{nor}}$, the normalization of $\mathfrak{Z}_k$. This induces a morphism 
\begin{equation}\label{spliteq}
\tilde{\mathfrak{gl}_k}: \tilde{\mathfrak{M}}_{g,n+k,d-kd_0}\times'(\mathfrak{M}^{\text{wt,ss}}_{0,1,d_0})^k\rightarrow \mathfrak{Z}_{(k)},
\end{equation}
which is \'etale of degree $k!/\prod_{i=1}^{i=k}\mathfrak{r}_i$ (\cite[Lemma 2.4.2]{YZ}).

\subsection{Calibration bundle}\label{calbun} 
Recall that $\mathfrak{Z}_1\subset\mathfrak{M}^{\text{wt},\text{ss}}_{g,n,d}$ is the reduced substack parametrizing curves with at least one degree $(0,d_0)$ rational tail.

\begin{definition}
	When $(g,n,d)\neq(0,1,d_0)$, the \textit{universal calibration bundle} is defined to be the line bundle $\O_{\mathfrak{M}^{\text{wt},\text{ss}}_{g,n,d}}(-\mathfrak{Z}_1)$; when $(g,n,d)=(0,1,d_0)$, the universal calibration bundle is the cotangent bundle at the unique marked point of rational tail.
\end{definition}

The calibration bundle of $\tilde{\mathfrak{M}}_{g,n,d}$, denoted by 
$$\mathbb{M}_{\tilde{\mathfrak{M}}_{g,n,d}},$$
is the pullback of the universal calibration bundle of $\mathfrak{M}^{\text{wt},\text{ss}}_{g,n,d}$ via the forgetfull map $\tilde{\mathfrak{M}}_{g,n,d}\rightarrow \mathfrak{M}^{\text{wt},\text{ss}}_{g,n,d}$. 

\begin{definition}\label{sscabun}
	The moduli stack of \textit{$\epsilon_0$-semistable curves with calibrated tails }is defined as 
	\[
	M\tilde{\mathfrak{M}}_{g,n,d}:=\P_{\tilde{\mathfrak{M}}_{g,n,d}}(\mathbb{M}_{\tilde{\mathfrak{M}}_{g,n,d}}\oplus\O_{\tilde{\mathfrak{M}}_{g,n,d}}).
	\]
\end{definition}

An $R$-point of $M\tilde{\mathfrak{M}}_{g,n,d}$ consists of 
\[
(\pi:C\rightarrow R,x,e,N,v_1,v_2),
\]
where
\begin{itemize}
\item $(\pi:C\rightarrow R, x,e)\in \tilde{\mathfrak{M}}_{g,n,d}(R)$;
\item $N$ is a line bundle on $R$;
\item $v_1\in\Gamma(R,\mathbb{M}_R\otimes N)$, $v_2\in \Gamma(R,N)$ have no common zeros, where $\mathbb{M}_R$ is the calibration bundle for the family of curves $\pi:C\rightarrow R$, defined by pulling back $\mathbb{M}_{\tilde{\mathfrak{M}}_{g,n,d}}$ via the map $R\to \tilde{\mathfrak{M}}_{g,n,d}$.
\end{itemize}

\subsection{Quasimap with entangled tails}\label{entails} 
Let $\tilde{V}, S, G $ be as defined in Section \ref{secTS}. Recall $X=[\tilde{V}/(S\times G)],X^{ss}=[\tilde{V}^{ss}/(S\times G)]$ and fix a curve class $\tilde{\beta}=(d',{\beta''})$ where $d'=(d'_1, \cdots, d'_r)$. Let $Q^{\text{pre}}_{g,n}(X^{ss},\tilde{\beta})$ be the stack of genus-$g$, $n$-marked quasimaps to $X^{ss}$ with curve class $\tilde{\beta}$. For each irreducible component $C_0'\subset C_0$, assign a pair $(d'_{C'_0},d''_{C'_0})$ given by the degree of class (i.e. $d'_{C'_0}:=\sum_{i=1}^{r}d'_i\vert_{C'_0}$ and $d''_{C'_0}:=\beta''_{C'_0}(L_{\theta})$). We call such a pair of non-negative numbers the degree of the curve and give it the dictionary order. 
\begin{remark}\label{rem3.6}
    Recall, $d'=(d'_1, \cdots, d'_r)$ is the degree of map $[u']:C_0\rightarrow [W^{ss}/S]$. We reuse the notation as the weight assigned to each irreducible component $C'_0$ given by $d'|_{C_0}:=\sum_{i=1}^{r} d'_i|_{C_0}$. 
\end{remark}
Define 
$$Q^{ss}_{g,n}(X^{ss},\tilde{\beta})\subset Q^{\text{pre}}_{g,n}(X^{ss},\tilde{\beta})$$ 
to be the open substack where the quasimaps have no 
\begin{itemize}
\item rational tails of degree $<(0,d_0)$ or $>(0,\infty)$ or rational bridges of degree $(0,0)$,
\item base points of length $>d_0$ on rational components (bridges and tails).
\end{itemize}

Let $\beta=(\beta',\tilde{\beta})$ be as before. Using degrees of quasimaps, we assign weights to irreducible components of the underlying curve. This defines a forgetful morphism $Q^{\text{ss}}_{g,n}(X^{ss},\tilde{\beta})\rightarrow \mathfrak{M}^{\text{wt,ss}}_{g,n,d}$ where $d=\text{deg}(\tilde{\beta})$. Define the moduli stack of $\epsilon_0$-semistable quasimaps to $[E^{ss}/G]$ as the fiber product
\[
{Q}^{\text{ss}}_{g,n}([E^{ss}/G],\beta):=\mathfrak{M}_{g,n}^{\text{tw,pre}}(Y,\beta')\times_{Q_{g,n}^{\text{pre}}([W^{ss}/S],d')}Q^{ss}_{g,n}(X^{ss},(d',{\beta''})).
\]
Composing with projection from above, we have
\[
{Q}^{\text{ss}}_{g,n}([E^{ss}/G],\beta)\rightarrow Q^{\text{ss}}_{g,n}(X^{ss},(d',{\beta''}))\rightarrow \mathfrak{M}^{\text{wt,ss}}_{g,n,d}.
\]

\begin{definition}\label{def17}
	We define the stack of \textit{$n$-pointed genus $g$, $\epsilon_0$-semistable quasimaps with entangled tails to $[E^{ss}/G]$ with curve class $\beta$} to be
	\[
	{Q}^{\sim}_{g,n}([E^{ss}/G], \beta):= {Q}^{\text{ss}}_{g,n}([E^{ss}/G],\beta)\times_{\mathfrak{M}^{\text{wt,ss}}_{g,n,d}} \tilde{\mathfrak{M}}_{g,n,d}.
	\]
\end{definition}

The construction above can be summarized in the following fibered diagram,
	\begin{equation}\label{cd9}
	\begin{tikzcd}
		{{Q}^{\sim}_{g,n}([E^{ss}/G], \beta)}\arrow[r]\arrow[d] & {Q^{\sim}_{g,n}(X^{ss},(d',{\beta''}))}\arrow[r] \arrow[d] & {\tilde{\mathfrak{M}}_{g,n,d}}\arrow[d]\\
		{Q_{g,n}^{\text{ss}}([E^{ss}/G],\beta)}\arrow[r]\arrow[d]& {Q_{g,n}^{\text{ss}}(X^{ss},(d',{\beta''}))}\arrow[d] \arrow[r]& {\mathfrak{M}^{\text{wt,ss}}_{g,n,d}}\\
		{\mathfrak{M}_{g,n}^{tw,\text{pre}}(Y,\beta')}\arrow[r] & {Q_{g,n}^{\text{pre}}([W^{ss}/S],d')}.
	\end{tikzcd}
\end{equation}
For a scheme $R$, an $R$-point of $Q^{\sim}_{g,n}([E^{ss}/G],\beta)$ is given by
\[
((\phi:C\rightarrow C_0,f,x),(C_0,x,[u],e)),
\]
where $((\phi:C\rightarrow C_0,f,x),(C_0,x,[u]))\in {Q}^{\text{ss}}_{g,n}([E^{ss}/G],\beta)(R)$ and $(C_0,x,e)\in \tilde{\mathfrak{M}}_{g,n,d}(R)$.

\begin{lemma}\label{lem16}
	The stack ${Q}^{\sim}_{g,n}([E^{ss}/G], \beta)$ is an Artin stack of finite type.
\end{lemma}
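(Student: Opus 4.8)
The plan is to realize ${Q}^{\sim}_{g,n}([E^{ss}/G],\beta)$ as a composition of well-behaved fiber products, working from the definitions in Section \ref{entails} and the diagram \eqn{cd9}. First I would establish that ${Q}^{\text{ss}}_{g,n}([E^{ss}/G],\beta)$ is an Artin stack of finite type. By definition it is the fiber product
\[
\mathfrak{M}_{g,n}^{\text{tw,pre}}(Y,\beta')\times_{Q_{g,n}^{\text{pre}}([W^{ss}/S],d')}Q^{ss}_{g,n}(X^{ss},(d',{\beta''})).
\]
The stack $\mathfrak{M}_{g,n}^{\text{tw,pre}}(Y,\beta')$ is Artin and locally of finite type by \cite{AV} (this was already used in Theorem \ref{thm9}), and $Q_{g,n}^{\text{pre}}([W^{ss}/S],d')$ is Artin and locally of finite type by \cite[Section 2.4.1]{CCK}. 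For the third corner I would note that $Q^{ss}_{g,n}(X^{ss},\tilde\beta)$ is an \emph{open} substack of $Q^{\text{pre}}_{g,n}(X^{ss},\tilde\beta)$ — the defining conditions (no rational tails/bridges of forbidden degree, no long base points on rational components) are open conditions, exactly as in the $\epsilon$-stable case analyzed in Section \ref{sec:e-quasimap} — and $Q^{\text{pre}}_{g,n}(X^{ss},\tilde\beta)$ is Artin and locally of finite type by \cite[Section 2.4.1]{CCK}. A fiber product of Artin stacks locally of finite type is again Artin and locally of finite type, so ${Q}^{\text{ss}}_{g,n}([E^{ss}/G],\beta)$ is Artin, locally of finite type.

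Next I would upgrade \emph{locally} of finite type to finite type for ${Q}^{\text{ss}}_{g,n}([E^{ss}/G],\beta)$. This is a boundedness statement: I would show the number of irreducible components of the underlying curve is bounded, following the proof of Lemma \ref{lem2.11} verbatim but replacing the $\epsilon$-stability bound on non-contracted rational components by the $\epsilon_0$-semistability constraints — rational tails have degree in the bounded range $[(0,d_0),(0,\infty))$ and $(0,0)$-rational bridges are excluded, while $f$-nonconstant (hence $\phi$-contracted) rational tails are bounded by $\beta(L)>0$ as before. Combined with the ample line bundle estimate from \cite[Lemma 3.2.1]{ckm2014stable} and \cite[Corollary 3.2.3]{ckm2014stable} applied to the quasimap $(C_0,[u])$ on $X^{ss}$, this gives a uniform bound, so ${Q}^{\text{ss}}_{g,n}([E^{ss}/G],\beta)$ is of finite type over $\C$.

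Finally, by Definition \ref{def17}, ${Q}^{\sim}_{g,n}([E^{ss}/G],\beta) = {Q}^{\text{ss}}_{g,n}([E^{ss}/G],\beta)\times_{\mathfrak{M}^{\text{wt,ss}}_{g,n,d}}\tilde{\mathfrak{M}}_{g,n,d}$. The forgetful morphism ${Q}^{\text{ss}}_{g,n}([E^{ss}/G],\beta)\rightarrow \mathfrak{M}^{\text{wt,ss}}_{g,n,d}$ exists (it is the composition in \eqn{cd9}: assign weights by degrees, then forget to $X^{ss}$ and to the weighted curve). The stack $\tilde{\mathfrak{M}}_{g,n,d}=\mathfrak{U}_0$ is obtained from $\mathfrak{M}^{\text{wt,ss}}_{g,n,d}$ by a finite sequence of blowups along closed substacks $\mathfrak{Z}_{(i)}$, so $\tilde{\mathfrak{M}}_{g,n,d}\rightarrow\mathfrak{M}^{\text{wt,ss}}_{g,n,d}$ is proper, in particular of finite type; and $\mathfrak{M}^{\text{wt,ss}}_{g,n,d}$ is Artin (it is an open substack of the weighted-curve stack $\mathfrak{M}^{\text{wt}}_{g,n,d}$, cf. \cite{HL10}). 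Since a fiber product of Artin stacks is Artin, and since $\tilde{\mathfrak{M}}_{g,n,d}\to\mathfrak{M}^{\text{wt,ss}}_{g,n,d}$ is of finite type, the base change ${Q}^{\sim}_{g,n}([E^{ss}/G],\beta)\to{Q}^{\text{ss}}_{g,n}([E^{ss}/G],\beta)$ is of finite type; composing with ${Q}^{\text{ss}}_{g,n}([E^{ss}/G],\beta)\to\Spec\C$ of finite type, the composite is of finite type. Hence ${Q}^{\sim}_{g,n}([E^{ss}/G],\beta)$ is an Artin stack of finite type.

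I expect the main obstacle to be the boundedness step (finite type rather than locally of finite type): one must check carefully that the $\epsilon_0$-semistability conditions imposed in defining $Q^{ss}_{g,n}(X^{ss},\tilde\beta)$ — in particular the upper bound on base-point length on rational components and the degree constraints on rational tails — are exactly what is needed to bound the number of components, and that the $Y$-factor $\mathfrak{M}_{g,n}^{\text{tw,pre}}(Y,\beta')$ contributes no new unbounded families once the fiber-product constraint $Q_f\cong Q_u$ is imposed. All the remaining assertions are formal consequences of stability of finite-type-ness under fiber products and the explicit blowup construction of $\tilde{\mathfrak{M}}_{g,n,d}$.
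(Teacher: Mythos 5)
Your proposal is correct and follows essentially the same route as the paper's proof: realize $Q^{\sim}$ via the fibered diagram \eqn{cd9} as a base change of ${Q}^{\text{ss}}_{g,n}([E^{ss}/G],\beta)$ along the projective (blowup) morphism $\tilde{\mathfrak{M}}_{g,n,d}\to\mathfrak{M}^{\text{wt,ss}}_{g,n,d}$, get Artin + locally of finite type from Theorem~\ref{thm9} and openness of the semistability conditions, and then establish finite type by bounding the number of irreducible components, splitting into $f$-nonconstant rational tails (bounded by $\beta'$) and the rest (bounded via the semistability constraints on $Q^{\text{ss}}_{g,n}(X^{ss},\cdot)$). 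The paper delegates that last boundedness claim to \cite[proof of Lemma 3.1.2]{YZ} rather than re-deriving it from the \cite{ckm2014stable} estimates as you do, but the content is the same.
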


\begin{proof}
Recall from Theorem \ref{thm9}, $Q^{\text{pre}}_{g,n}([E^{ss}/G],\beta)$ is an Artin stack locally of finite type. Hence, so is ${Q}^{\text{ss}}_{g,n}([E^{ss}/G],\beta)$. As the morphism $\tilde{\mathfrak{M}}_{g,n,d}\rightarrow \mathfrak{M}^{\text{wt,ss}}_{g,n,d}$ is projective, by Definition \ref{def17} we get that ${Q}^{\sim}_{g,n}([E^{ss}/G], \beta)$ is an Artin stack of locally finite type.
	
	To show it is of finite type, it is enough to show that ${Q}^{\text{ss}}_{g,n}([E^{ss}/G],\beta)$ is bounded. We only need to show that the underlying curve has finitely many unmarked rational components, as the other components are bounded by $g,n$. Such components with restriction of $f$ being non-constant are bounded by $\beta'$. Hence it is enough to show that $f$-constant rational components with no marked points are finite. This in turn follows from the definition and noting that $Q_{g,n}^{\text{ss}}(X^{ss},(d',\tilde{\beta}))$ is bounded, see \cite[proof of Lemma 3.1.2]{YZ}.
\end{proof}

\begin{definition}
	An $R$-family of $\epsilon_0$-semistable quasimaps with entangled tails is called \textit{$\epsilon_+$-stable} if the underlying family of quasimaps to $[E^{ss}/G]$ is $\epsilon_+$-stable. We denote the moduli of $n$-marked, genus $g$, $\epsilon_+$-stable quasimaps of degree $\beta$ to $[E^{ss}/G]$ by ${\tilde{Q}}^{\epsilon_{+}}_{g,n}([E^{ss}/G], \beta)$.
\end{definition}

 From the definition, we have a natural isomorphism 
\[
{\tilde{Q}}^{\epsilon_{+}}_{g,n}([E^{ss}/G], \beta)\cong {Q}^{\epsilon_{+}}_{g,n}([E^{ss}/G],\beta)\times_{\mathfrak{M}^{\text{wt,ss}}_{g,n,d}} \tilde{\mathfrak{M}}_{g,n,d}.
\]
As ${Q}^{\epsilon_{+}}_{g,n}([E^{ss}/G],\beta)$ is a proper Deligne--Mumford stack, we have
\begin{lemma}
	The stack ${\tilde{Q}}^{\epsilon_{+}}_{g,n}([E^{ss}/G], \beta)$ is a proper Deligne--Mumford stack.
\end{lemma}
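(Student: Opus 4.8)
The plan is to obtain the statement formally from the natural isomorphism
\[
{\tilde{Q}}^{\epsilon_{+}}_{g,n}([E^{ss}/G], \beta)\cong {Q}^{\epsilon_{+}}_{g,n}([E^{ss}/G],\beta)\times_{\mathfrak{M}^{\text{wt,ss}}_{g,n,d}} \tilde{\mathfrak{M}}_{g,n,d}
\]
recorded just above, together with two facts already available: that ${Q}^{\epsilon_{+}}_{g,n}([E^{ss}/G],\beta)$ is a proper Deligne--Mumford stack of finite type over $\C$ (Theorem~\ref{thm9}, Lemma~\ref{lem2.11}, Proposition~\ref{Prop1}), and that the entanglement morphism $\tilde{\mathfrak{M}}_{g,n,d}\to\mathfrak{M}^{\text{wt,ss}}_{g,n,d}$ is projective, being a finite composition of blowups along closed substacks (as used in the proof of Lemma~\ref{lem16}). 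So the proof is pure bookkeeping: no new geometric estimate is needed.

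First I would observe that the projection
\[
q\colon {\tilde{Q}}^{\epsilon_{+}}_{g,n}([E^{ss}/G], \beta)\longrightarrow {Q}^{\epsilon_{+}}_{g,n}([E^{ss}/G],\beta)
\]
is the base change of $\tilde{\mathfrak{M}}_{g,n,d}\to\mathfrak{M}^{\text{wt,ss}}_{g,n,d}$ along the weight morphism ${Q}^{\epsilon_{+}}_{g,n}([E^{ss}/G],\beta)\to\mathfrak{M}^{\text{wt,ss}}_{g,n,d}$. Since projectivity, and in particular representability and properness, are stable under base change, $q$ is projective, hence representable and proper. Next, a stack admitting a representable morphism to a Deligne--Mumford stack is itself Deligne--Mumford (pull back an \'etale atlas of the target); applied to $q$ and the Deligne--Mumford stack ${Q}^{\epsilon_{+}}_{g,n}([E^{ss}/G],\beta)$, this shows ${\tilde{Q}}^{\epsilon_{+}}_{g,n}([E^{ss}/G], \beta)$ is Deligne--Mumford. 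Finally, properness over $\C$ follows by composing the proper morphism $q$ with the proper structure morphism ${Q}^{\epsilon_{+}}_{g,n}([E^{ss}/G],\beta)\to\Spec\C$, and finite type is inherited the same way; this completes the argument.

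The only point that is not entirely mechanical --- and hence the closest thing here to an obstacle --- is the verification hidden in the displayed isomorphism: that the weight morphism on ${Q}^{\epsilon_{+}}_{g,n}([E^{ss}/G],\beta)$ genuinely factors through the semistable locus $\mathfrak{M}^{\text{wt,ss}}_{g,n,d}$ rather than only through $\mathfrak{M}^{\text{wt}}_{g,n,d}$. Concretely, one checks that for an $\epsilon_+$-stable quasimap the contracted curve $C_0$ has no degree-$(0,0)$ rational bridge, no rational tail of degree below $(0,d_0)$ or above $(0,\infty)$, and no base point of length exceeding $d_0$ on a rational component. Since $\phi$ contracts all maximal $f$-nonconstant rational subtrees, every rational tail and bridge of $C_0$ is $f$-constant, hence of the form $(0,\ast)$ and carrying an $\epsilon_+$-stable quasimap to $X^{ss}$ (Definition~\ref{stabcon}); the ampleness clause of Definition~\ref{def5} with $1/\epsilon_+\in(d_0-1,d_0)$ then forces the lower bound $(0,d_0)$ on tails and excludes $(0,0)$ bridges, while $\epsilon_+ l(x)\le 1$ gives $l(x)<d_0$ at base points. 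Once this compatibility is granted, the isomorphism in the statement holds and the rest of the proof is automatic.
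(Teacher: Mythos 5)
Your main argument is the paper's own: the lemma is read off from the displayed isomorphism $\tilde{Q}^{\epsilon_{+}}_{g,n}([E^{ss}/G],\beta)\cong Q^{\epsilon_{+}}_{g,n}([E^{ss}/G],\beta)\times_{\mathfrak{M}^{\text{wt,ss}}_{g,n,d}}\tilde{\mathfrak{M}}_{g,n,d}$, the projectivity of $\tilde{\mathfrak{M}}_{g,n,d}\to\mathfrak{M}^{\text{wt,ss}}_{g,n,d}$ (a composition of blowups, as already used in Lemma~\ref{lem16}), and the properness of $Q^{\epsilon_{+}}_{g,n}([E^{ss}/G],\beta)$; your base-change bookkeeping (the projection is projective hence representable and proper, Deligne--Mumfordness pulls back along a representable morphism to a DM stack, properness over $\C$ follows by composition) is correct and is exactly what the paper invokes, with no more detail than you give.

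The caveat is your final paragraph, where you claim to verify the compatibility that the paper asserts ``from the definition.'' The assertion that every rational tail and bridge of $C_0$ is $f$-constant is not justified: $\phi$ only contracts rational \emph{trees}, so an $f$-nonconstant rational bridge of $C$ survives into $C_0$; and a rational tail of $C_0$ can carry positive $d'$-degree concentrated at the points where $f$-nonconstant subtrees were contracted (this is built into the twist by $\O_{C_0}(\sum_i\deg(\mathcal{L}_{j_0}|_{T_i})\cdot t_i)$ in the construction of $Q_f$, and is compatible with prestability condition (2) of Definition~\ref{def2}). Moreover, on any rational component for which Definition~\ref{stabcon} is satisfied through the ``stable map to $Y$'' clause, $\epsilon_+$-stability imposes no bound at all on base-point lengths, so the condition ``no base points of length $>d_0$ on rational components'' in the definition of $Q^{\text{ss}}$ (Section~\ref{entails}) does not follow from the estimate $\epsilon_+ l(x)\le 1$ you cite. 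So your check covers only the components governed by the quasimap clause; the remaining configurations are precisely the ones the paper itself passes over silently, which means your write-up is no less complete than the paper's proof, but the verification as stated is incorrect and should either be repaired (ruling out, or separately handling, $f$-nonconstant rational components and concentrated $d'$-degree) or simply replaced by a citation of the isomorphism as the paper does.
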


Similar to the obstruction theory of ${Q}^{\epsilon_{+}}_{g,n}([E^{ss}/G],\beta)$ in Section \ref{obthr}, we can construct a perfect obstruction theory for ${\tilde{Q}}^{\epsilon_{+}}_{g,n}([E^{ss}/G], \beta)$ relative to $\mathfrak{Bun}_{S}^{tw}$. In fact, the virtual classes induced by these two obstruction theories are related by the following lemma.

\begin{lemma}\label{lem21}
	Under the forgetful morphism ${\tilde{Q}}^{\epsilon_{+}}_{g,n}([E^{ss}/G], \beta)\rightarrow {Q}^{\epsilon_{+}}_{g,n}([E^{ss}/G],\beta)$, the pushforward of $[{\tilde{Q}}^{\epsilon_{+}}_{g,n}([E^{ss}/G], \beta)]^{\text{vir}}$ is equal to $[{Q}^{\epsilon_{+}}_{g,n}([E^{ss}/G],\beta)]^{\text{vir}}$.
\end{lemma}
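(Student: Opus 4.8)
Write $\mathcal{Q}:={Q}^{\epsilon_{+}}_{g,n}([E^{ss}/G],\beta)$ and $\tilde{\mathcal{Q}}:={\tilde{Q}}^{\epsilon_{+}}_{g,n}([E^{ss}/G], \beta)$, and let $\mathfrak{f}\colon\tilde{\mathcal{Q}}\to\mathcal{Q}$ be the forgetful morphism. The plan is to recognize $\mathfrak{f}$ as a base change of the iterated blowup $b\colon \tilde{\mathfrak{M}}_{g,n,d}\to \mathfrak{M}^{\text{wt,ss}}_{g,n,d}$, to check that the perfect obstruction theory on $\tilde{\mathcal{Q}}$ is pulled back from that on $\mathcal{Q}$ up to the relative cotangent complex $\mathbb{L}_{b}$, and then to conclude from the functoriality of virtual pullback together with the identity $b_*[\tilde{\mathfrak{M}}_{g,n,d}]=[\mathfrak{M}^{\text{wt,ss}}_{g,n,d}]$.

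First I would record the geometric input. The morphism $b$ is a finite composition of blowups along the smooth centers $\mathfrak{Z}_{(i)}$ inside the smooth Artin stack $\mathfrak{M}^{\text{wt,ss}}_{g,n,d}$; hence $\tilde{\mathfrak{M}}_{g,n,d}$ is again a smooth Artin stack, $b$ is proper, representable, lci of relative dimension $0$, and an isomorphism over a dense open substack, so $b_*[\tilde{\mathfrak{M}}_{g,n,d}]=[\mathfrak{M}^{\text{wt,ss}}_{g,n,d}]$. By the natural isomorphism $\tilde{\mathcal{Q}}\cong \mathcal{Q}\times_{\mathfrak{M}^{\text{wt,ss}}_{g,n,d}}\tilde{\mathfrak{M}}_{g,n,d}$ recorded before Lemma~\ref{lem21}, the morphism $\mathfrak{f}$ is the base change of $b$ along the forgetful map $\mathcal{Q}\to \mathfrak{M}^{\text{wt,ss}}_{g,n,d}$.

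Next I would pass to a common smooth base. The weight decoration of $C_0$ defines an \'etale morphism $\mathfrak{M}^{\text{wt,ss}}_{g,n,d}\to \mathfrak{M}_{g,n}^{tw}$ (cf.\ \cite{YZ}), so $\mathfrak{B}:=\mathfrak{Bun}_{S}^{tw}\times_{\mathfrak{M}_{g,n}^{tw}}\mathfrak{M}^{\text{wt,ss}}_{g,n,d}$ is a smooth Artin stack, \'etale over $\mathfrak{Bun}_{S}^{tw}$ and smooth over $\mathfrak{M}^{\text{wt,ss}}_{g,n,d}$; replacing $\mathfrak{Bun}_{S}^{tw}$ by $\mathfrak{B}$ leaves the obstruction theory $E^{\bullet}$ of Section~\ref{obthr} and the class $[\mathcal{Q}]^{\text{vir}}$ unchanged, and $[\mathcal{Q}]^{\text{vir}}=\phi^{!}[\mathfrak{B}]$ where $\phi^{!}$ is the virtual pullback (in the sense of Manolache) along $\phi\colon \mathcal{Q}\to\mathfrak{B}$. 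Set $\tilde{\mathfrak{B}}:=\mathfrak{B}\times_{\mathfrak{M}^{\text{wt,ss}}_{g,n,d}}\tilde{\mathfrak{M}}_{g,n,d}$, again a smooth Artin stack, with $\tilde{b}\colon\tilde{\mathfrak{B}}\to\mathfrak{B}$ proper, representable, lci of relative dimension $0$ and $\tilde{b}_*[\tilde{\mathfrak{B}}]=[\mathfrak{B}]$, and $\tilde{\mathcal{Q}}=\mathcal{Q}\times_{\mathfrak{B}}\tilde{\mathfrak{B}}$. The point that needs genuine checking is that the perfect obstruction theory constructed for $\tilde{\mathcal{Q}}$ relative to $\mathfrak{Bun}_{S}^{tw}$, built by the same recipe as in Section~\ref{obthr} with the entangled curve moduli $\tilde{\mathfrak{M}}_{g,n,d}$ in place of the undecorated ones, is the one fitting into the exact triangle with $\mathfrak{f}^{*}E^{\bullet}$ and $\mathfrak{f}^{*}\mathbb{L}_{b}$ dictated by the factorization $\tilde{\mathcal{Q}}\xrightarrow{\mathfrak{f}}\mathcal{Q}\to\mathfrak{Bun}_{S}^{tw}$ together with $\mathbb{L}_{\tilde{\mathcal{Q}}/\mathcal{Q}}=\mathfrak{f}^{*}\mathbb{L}_{b}$; this should hold because that construction only involves the deformation theory of $f$, of $[u]$, and of the principal $S$-bundle $Q_f\cong Q_u$, none of which is affected by the blowup of the curve moduli. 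Granting this, since $\tilde{b}$ is lci of relative dimension $0$ between smooth stacks we have $\tilde{b}^{!}_{\mathrm{lci}}[\mathfrak{B}]=[\tilde{\mathfrak{B}}]$, and the composition formula for virtual pullbacks yields $[\tilde{\mathcal{Q}}]^{\text{vir}}=\phi^{!}[\tilde{\mathfrak{B}}]$.

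Finally, applying the compatibility of virtual pullback with proper pushforward to the cartesian square with horizontal arrows $\mathfrak{f}$, $\tilde{b}$ and vertical arrows $\phi$ and $\tilde{\mathcal{Q}}\to\tilde{\mathfrak{B}}$, one obtains $\mathfrak{f}_*\big(\phi^{!}[\tilde{\mathfrak{B}}]\big)=\phi^{!}\big(\tilde{b}_*[\tilde{\mathfrak{B}}]\big)=\phi^{!}[\mathfrak{B}]$, that is, $\mathfrak{f}_*[\tilde{\mathcal{Q}}]^{\text{vir}}=[\mathcal{Q}]^{\text{vir}}$, as claimed. I expect the obstruction-theory identification of the previous paragraph to be the main obstacle: one must go through the cone constructions of Section~\ref{obthr} and verify that the map-deformation summand is untouched by entangling tails while the curve-deformation summand changes precisely by $\mathbb{L}_{b}$. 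The remaining ingredients --- smoothness of the centers $\mathfrak{Z}_{(i)}$, the identity $b_*[\tilde{\mathfrak{M}}_{g,n,d}]=[\mathfrak{M}^{\text{wt,ss}}_{g,n,d}]$, and the functoriality of virtual pullback --- are formal.
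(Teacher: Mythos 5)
Your proposal is correct and is essentially the paper's argument unpacked: the paper proves this lemma by citing Costello's pushforward theorem (Theorem 5.0.1 of \cite{Cos}, see also \cite{HW21}), which is precisely the statement that for a base change along a proper, pure-degree-one morphism of smooth Artin stacks (here the iterated blowup $\tilde{\mathfrak{M}}_{g,n,d}\to\mathfrak{M}^{\text{wt,ss}}_{g,n,d}$) with pulled-back perfect obstruction theory, virtual classes push forward to virtual classes. Your Manolache-style virtual-pullback derivation, including the obstruction-theory compatibility you flag (which the paper likewise leaves implicit in its ``similar construction'' remark before the lemma), is just a spelled-out proof of that cited result in this special case.
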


\begin{proof}
This follows from \cite[Theorem 5.0.1]{Cos}, see also \cite{HW21}. 
\end{proof}

\subsubsection{Splitting off entangled tails} 
Let $$\mathfrak{C}_k^*\subset \tilde{\mathfrak{M}}_{g,n,d}$$ 
be the locus where there are exactly $k$ entangled tails. This is a locally closed smooth substack of codimension one.  Define the boundary divisor $$\mathcal{D}_k\subset \tilde{\mathfrak{M}}_{g,n,d}$$ as the closure of $\mathfrak{C}_k^*$. It is a smooth divisor with natural maps 
\[
\mathcal{D}_k\rightarrow\mathfrak{C}_k\rightarrow\mathfrak{Z}_{(k)}.
\]
where, $\mathfrak{Z}_{(k)}$ is the proper transform of substack parameterizing curves with at least $k$ rational tails of degree $(0,d_0)$ and $\mathfrak{C}_k$ is the exceptional divisor of blowup along $\mathfrak{Z}_{(k)}$.

Write $\tilde{\mathfrak{gl}}_k^*\mathcal{D}_k$ for the pullback along the gluing morphism (\ref{spliteq}).

Next we define the restriction $\tilde{Q}^{\epsilon_{+}}_{g,n}([E^{ss}/G],\beta)|_{\tilde{\mathfrak{gl}}_k^*\mathcal{D}_k}$ by the following fibered diagram,
\[
\begin{tikzcd}
	{\tilde{Q}^{\epsilon_{+}}_{g,n}([E^{ss}/G],\beta)|_{\tilde{\mathfrak{gl}}_k^*\mathcal{D}_k}}\arrow[r]\arrow[d]&{\tilde{Q}^{\epsilon_{+}}_{g,n}([E^{ss}/G],\beta)}\arrow[r]\arrow[d]&{Q^{\epsilon_+}_{g,n}([E^{ss}/G],\beta)}\arrow[d]\\
	{\tilde{\mathfrak{gl}}_k^*\mathcal{D}_k}\arrow[r,"i_{\mathcal{D}}"]&{\tilde{\mathfrak{M}}_{g,n,d}}\arrow[r]&{\mathfrak{M}_{g,n,d}^{\text{wt,ss}}}.
\end{tikzcd}
\]
Define $[\tilde{Q}^{\epsilon_{+}}_{g,n}([E^{ss}/G],\beta)|_{\tilde{\mathfrak{gl}}_k^*\mathcal{D}_k}]^{\text{vir}}:=i_{\mathcal{D}}^![\tilde{Q}^{\epsilon_{+}}_{g,n}([E^{ss}/G],\beta)]^{\text{vir}}$.

Moreover, the map $\tilde{\mathfrak{gl}}_k^*\mathcal{D}_k\rightarrow \mathfrak{M}_{g,n,d}^{\text{wt,ss}}$ factors as the composition 
\[
\tilde{\mathfrak{gl}}_k^*\mathcal{D}_k\rightarrow \mathfrak{M}_{g,n+k,d-kd_0}^{\text{wt,ss}}\times'(\mathfrak{M}_{0,1,d_0}^{\text{wt,ss}})^k\rightarrow \mathfrak{M}_{g,n,d}^{\text{wt,ss}}.
\]
Completing the analogous fibered diagram, we get 
\begin{equation}\label{diag3.3}
  \begin{tikzcd}
	{\tilde{Q}^{\epsilon_{+}}_{g,n}([E^{ss}/G],\beta)|_{\tilde{\mathfrak{gl}}_k^*\mathcal{D}_k}}\arrow[r,"p"]\arrow[d]&{\bigsqcup_{\overrightarrow{\beta}}Q_{\overrightarrow{\beta}}}\arrow[r]\arrow[d]&{Q^{\epsilon_+}_{g,n}([E^{ss}/G],\beta)}\arrow[d]\\
	{\tilde{\mathfrak{gl}}_k^*\mathcal{D}_k}\arrow[r]&{\mathfrak{M}_{g,n+k,d-kd_0}^{\text{wt,ss}}\times'(\mathfrak{M}_{0,1,d_0}^{\text{wt,ss}})^k}\arrow[r]&{\mathfrak{M}_{g,n,d}^{\text{wt,ss}}},
\end{tikzcd}  
\end{equation}
where $\overrightarrow{\beta}=(\beta_0,\beta_1,\cdots,\beta_k)$ runs through all $(k+1)$-tuples of effective curve classes such that deg$(\beta_i)=(0,d_0)$ for each $i=1,\cdots,k$ along with
\[
\beta=\beta_0+\cdots+\beta_k,
\]
 and we denote
\[
Q_{\overrightarrow{\beta}}:=\tilde{Q}^{\epsilon_{+}}_{g,n+k}([E^{ss}/G],\beta_0)\times_{(I_{\mu}[E^{ss}/G])^k}\prod_{i=1}^k{Q}^{\epsilon_{+}}_{0,1}([E^{ss}/G],\beta_i).
\]

\begin{lemma}\label{lem22}
	\begin{equation}
		\begin{split}
			[\tilde{Q}^{\epsilon_{+}}_{g,n}&([E^{ss}/G],\beta)|_{\tilde{\mathfrak{gl}}_k^*\mathcal{D}_k}]^{\text{vir}}=\\
			&p^*\left(\sum_{\overrightarrow{\beta}}\Delta^!_{(I_{\mu}[E^{ss}/G])^k}[\tilde{Q}^{\epsilon_{+}}_{g,n+k}([E^{ss}/G],\beta_0)]^{\text{vir}}\boxtimes\prod_{i=1}^k[{Q}^{\epsilon_{+}}_{0,1}([E^{ss}/G],\beta_i)]^{\text{vir}}\right),
		\end{split}
	\end{equation}
where $\Delta_{(I_{\mu}[E^{ss}/G])^k}:(I_{\mu}[E^{ss}/G])^k\rightarrow (I_{\mu}[E^{ss}/G])^k\times (I_{\mu}[E^{ss}/G])^k$ is the diagonal morphism.
\end{lemma}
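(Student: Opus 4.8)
The plan is to reduce the statement to a compatibility between virtual pullback along the étale gluing morphism and the product obstruction theories on the factor moduli spaces, following the strategy of \cite[Section 3.3]{YZ}. First I would unwind the definitions on the left-hand side: by construction $[\tilde{Q}^{\epsilon_{+}}_{g,n}([E^{ss}/G],\beta)|_{\tilde{\mathfrak{gl}}_k^*\mathcal{D}_k}]^{\text{vir}} = i_{\mathcal{D}}^![\tilde{Q}^{\epsilon_{+}}_{g,n}([E^{ss}/G],\beta)]^{\text{vir}}$, where $i_{\mathcal{D}}:\tilde{\mathfrak{gl}}_k^*\mathcal{D}_k\rightarrow \tilde{\mathfrak{M}}_{g,n,d}$ is the regular embedding of a smooth divisor (Cartier), so the refined Gysin map $i_{\mathcal{D}}^!$ is just intersection with the corresponding divisor class pulled back to the quasimap moduli. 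The key point is that $\tilde{\mathfrak{gl}}_k$ is étale (of degree $k!/\prod_i \mathfrak{r}_i$) onto $\mathfrak{Z}_{(k)}$ by \eqn{spliteq}, and that the boundary divisor $\mathcal{D}_k$ and its preimage $\tilde{\mathfrak{gl}}_k^*\mathcal{D}_k$ split the normal directions of the $k$ entangled tails from the rest of the curve.

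The main steps, in order, would be: (1) Identify $\tilde{\mathfrak{gl}}_k^*\mathcal{D}_k$ with (a blowup-modified version of) $\tilde{\mathfrak{M}}_{g,n+k,d-kd_0}\times'(\mathfrak{M}^{\text{wt,ss}}_{0,1,d_0})^k$, and observe that the forgetful map to $\mathfrak{M}^{\text{wt,ss}}_{g,n,d}$ factors through the product stack as displayed just before the Lemma; this is where the $Q_{\overrightarrow{\beta}}$ with the fiber product over $(I_\mu[E^{ss}/G])^k$ arise, the node-matching being recorded by the evaluation maps to the cyclotomic inertia stack. (2) Use the cartesian diagram \eqn{diag3.3} to write $p$ as the base change of this étale gluing map, so that $p$ is étale; hence $p^!=p^*$ on Chow groups. (3) Establish the key compatibility: under $p$, the (relative, over $\mathfrak{Bun}_S^{tw}$) obstruction theory of $\tilde{Q}^{\epsilon_{+}}_{g,n}([E^{ss}/G],\beta)$ restricted to $\tilde{\mathfrak{gl}}_k^*\mathcal{D}_k$ pulls back to the direct sum of the obstruction theories of the factors $\tilde{Q}^{\epsilon_{+}}_{g,n+k}([E^{ss}/G],\beta_0)$ and ${Q}^{\epsilon_{+}}_{0,1}([E^{ss}/G],\beta_i)$, up to the normal-bundle contributions of the $k$ nodes, which after smoothing contribute exactly the $\Delta^!_{(I_{\mu}[E^{ss}/G])^k}$ Gysin factor. (4) Apply the functoriality of virtual pullback (à la Manolache / the splitting axiom for virtual classes) to conclude. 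One should also invoke Lemma \ref{lem21} (the Costello-type comparison) as needed to pass between the entangled-tail moduli and the plain $\epsilon_+$-moduli in the factors, and sum over all splittings $\overrightarrow{\beta}$ of the curve class with $\deg\beta_i=(0,d_0)$.

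The bulk of the argument is bookkeeping that is local on the curve: away from the $k$ entangled nodes nothing changes, so the obstruction theory splits as a direct sum over the components of the partial normalization, and one only has to track the deformation/obstruction of smoothing each node — this is the classical node-smoothing exact triangle, which produces the normal bundle of the diagonal $\Delta_{(I_\mu[E^{ss}/G])^k}$ and hence the refined Gysin operator $\Delta^!_{(I_{\mu}[E^{ss}/G])^k}$. Because all of our moduli stacks and their obstruction theories are built as fiber products over $\mathfrak{M}^{\text{wt,ss}}_{g,n,d}$ (equivalently over $\mathfrak{Bun}_S^{tw}$) with the same base-direction deformation theory, this splitting is inherited from the corresponding splitting for $Q^{\sim}_{g,n}(X^{ss},(d',\beta''))$ proved in \cite{YZ}, pulled back along $Y\times_{[W^{ss}/S]}(-)$; I would make this precise using the fibered diagram \eqn{cd9}.

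The hard part will be step (3): verifying that the obstruction theory $E^{\bullet}$ constructed in Section \ref{obthr} is genuinely compatible with the gluing/normalization morphism — i.e. that the cone-construction defining $E^{\bullet}_{\mu_1}$ and $E^{\bullet}_{\mu_2}$ commutes with restriction to the boundary and with partial normalization at the $k$ nodes, so that $p^*(E^{\bullet})|_{\tilde{\mathfrak{gl}}_k^*\mathcal{D}_k}$ really is the direct sum obstruction theory plus the node-smoothing term. Since the $Y$-direction contributes a stable-map obstruction theory (which splits additively over components by the standard normalization sequence for $\overline{M}_{g,n}(Y,\beta')$) and the $X^{ss}$-direction splits by \cite{YZ}, and the two are glued along the common $[W^{ss}/S]$-factor whose deformations are the fixed $\mathfrak{Bun}_S^{tw}$-directions, the comparison should go through; but care is needed at the gerbe markings and in matching the locally constant functions $\mathfrak{r}_i$ with the orders of the automorphism groups at the glued nodes, exactly as in \cite[Section 3.3]{YZ}.
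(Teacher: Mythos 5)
Your plan follows the same route the paper intends: the paper's entire proof is the one-line reference to \cite[Lemma 3.2.1]{YZ}, and your sketch is a correct unwinding of that argument (splitting the obstruction theory at the $k$ entangled nodes, tracking node-smoothing as a $\Delta^!$-factor, and reducing the $[E^{ss}/G]$-case to the $X^{ss}$-case via the fibered-product structure over $\mathfrak{Bun}_S^{\text{tw}}$). One concrete error in step (2), though: $p$ is \emph{not} \'etale. It is the base change of $\tilde{\mathfrak{gl}}_k^*\mathcal{D}_k\to\mathfrak{M}^{\text{wt,ss}}_{g,n+k,d-kd_0}\times'(\mathfrak{M}^{\text{wt,ss}}_{0,1,d_0})^k$, and that map factors through the \'etale $\tilde{\mathfrak{gl}}_k$ onto $\mathfrak{Z}_{(k)}$ composed with $\mathcal{D}_k\to\mathfrak{Z}_{(k)}$, which is an inflated $\P^{k-1}$-bundle, not generically finite. (The paper itself uses this: in the proof of Lemma~\ref{lem36} it identifies the analogous $p$ as ``the inflated projective bundle in the sense of Section~\ref{InfProjBun}.'') So ``$p^!=p^*$ because $p$ is \'etale'' is the wrong justification; what saves you is that $p$ is smooth (hence flat, so $p^*$ is defined and agrees with lci pullback), and the extra tangent directions of the inflated projective bundle live entirely in the base of the relative obstruction theory, so they don't contribute. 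Your step~(4), invoking functoriality of virtual pullback for the relative obstruction theories, is the right mechanism, but it should be invoked for a smooth morphism of positive relative dimension, not for an \'etale one.
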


\begin{proof}
	This follows by an argument similar to the one for \cite[Lemma 3.2.1]{YZ}.
\end{proof}
\subsection{Definition of the master space}\label{subsec:defn_master_space} 
Let $g,n,d,d_0=1/\epsilon_0$ be as above. As in Section \ref{calbun}, let $\mathbb{M}_{\tilde{\mathfrak{M}}_{g,n,d}}$ be the calibration bundle of $\tilde{\mathfrak{M}}_{g,n,d} $ and let $M\tilde{\mathfrak{M}}_{g,n,d}$ be the moduli of curves with calibrated tails. 

\begin{definition}\label{def21}
	The \textit{moduli stack of genus $g$, $n$-marked $\epsilon_0$-semistable quasimaps with calibrated tails to $[E^{ss}/G]$ of curve class $\beta$} is defined to be
	\[
	MQ^{\sim}_{g,n}([E^{ss}/G], \beta):=Q^{\sim}_{g,n}([E^{ss}/G],\beta)\times_{\tilde{\mathfrak{M}}_{g,n,d}}M\tilde{\mathfrak{M}}_{g,n,d}. 
	\]
\end{definition}
For a scheme $R$, an $R$-point of $MQ^{\sim}_{g,n}([E^{ss}/G], \beta)$ can be expressed as 
	\[
	((\phi:C\rightarrow C_0,f,x),(C_0,x,[u],e,N,v_1,v_2)), 
	\]
	where $((\phi:C\rightarrow C_0,f,x),(C_0,x,[u],e))\in Q^{\sim}_{g,n}([E^{ss}/G],\beta)(R)$ and $(C_0,x,e,N,v_1,v_2)\in M\tilde{\mathfrak{M}}_{g,n,d}(R)$.
	
From diagram (\ref{cd9}), we have the following natural correspondence
	\begin{equation}\label{eq10}
     	MQ^{\sim}_{g,n}([E^{ss}/G], \beta)\cong \mathfrak{M}_{g,n}^{tw,\text{pre}}(Y,\beta')\times_{Q_{g,k}^{\text{pre}}([W^{ss}/S],d')}MQ^{\sim}_{g,n}(X^{ss},\tilde{\beta}),
	\end{equation}
where 
\[
MQ^{\sim}_{g,n}(X^{ss},\tilde{\beta}):=Q^{\sim}_{g,n}(X^{ss},\tilde{\beta})\times_{\tilde{\mathfrak{M}}_{g,n,d}}M\tilde{\mathfrak{M}}_{g,n,d}.
\]

Let $((\phi:C\rightarrow C_0,f,x),(C_0,x,[u],e))\in Q^{\sim}_{g,n}([E^{ss}/G],\beta)(\C)$ be a geometric point of $\epsilon_0$-semistable quasimaps with entangled tails. Recall from the definition in Section \ref{entails} that the curve $C_0$ may have a rational tail of degree $(0,d_0)$ and base points of length $d_0$.

\begin{definition}\label{rbp}
We call a base point a \textit{relevant base point} if it is contained in a rational component.
\end{definition}
	
\begin{definition}
A degree-$(0,d_0)$ rational tail $E\subset C_0$ is called a \textit{constant tail }if $E$ contains a (relevant) base point of length $d_0$. 
\end{definition}
	
We now describe the stability condition.

\begin{definition}\label{scal}
	An $R$-family of $\epsilon_0$-semistable quasimaps with calibrated tails 
	\[
	((\phi:C\rightarrow C_0,f,x),(C_0,x,[u],e,N,v_1,v_2))
	\]
	is \textit{$\epsilon_0$-stable} if over every geometric point $r$ of $R$,
	\begin{enumerate}
		\item any constant tail is an entangled tail;
		\item if the geometric fiber $C_0(r)$ of $C_0$ contains rational tails of degree $(0,d_0)$, then all the length $d_0$ relevant base points (in the sense of Definition \ref{rbp}) are contained in these tails;
		\item if $v_1(r)=0$, then $((\phi:C\rightarrow C_0,f,x),(C_0,x,[u]))(r)$ is an $\epsilon_{+}$-stable quasimap to $[E^{ss}/G]$;
		\item if $v_2(r)=0$, then $((\phi:C\rightarrow C_0,f,x),(C_0,x,[u]))(r)$ is an $\epsilon_{-}$-stable quasimap to $[E^{ss}/G]$.
	\end{enumerate}
	
We denote by $$MQ^{\epsilon_0}_{g,n}([E^{ss}/G],\beta)$$ the category of \textit{genus-$g$, $n$-marked, $\epsilon_0$-stable quasimaps with calibrated tails to $[E^{ss}/G]$ of curve class $\beta$}.
\end{definition}

\begin{lemma}\label{lem25}
The stability condition in Definition \ref{scal} is an open condition.
\end{lemma}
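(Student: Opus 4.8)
The plan is to verify that each of the four defining conditions in Definition \ref{scal} is an open condition in families, since a finite intersection of open conditions is open. I would work over a base scheme $R$ and an $R$-family $((\phi:C\rightarrow C_0,f,x),(C_0,x,[u],e,N,v_1,v_2))$ of $\epsilon_0$-semistable quasimaps with calibrated tails, and show that the locus in $R$ where all four conditions hold is open.

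First I would treat conditions (3) and (4), which are the most direct. Condition (3) asks that on the vanishing locus $Z_1:=\{v_1=0\}\subset R$ (a closed subscheme, since $v_1$ is a section of a line bundle) the underlying family of quasimaps to $[E^{ss}/G]$ is $\epsilon_+$-stable; by the results of Section \ref{subsec:defn_e-quasimap} (specifically the discussion preceding Theorem \ref{thm9}), $\epsilon_+$-stability is an open condition on $Q^{\text{pre}}_{g,n}([E^{ss}/G],\beta)$, so it cuts out an open subscheme $U_1\subset Z_1$; the points of $R$ satisfying (3) are exactly those in $R\setminus(Z_1\setminus U_1)$, and $Z_1\setminus U_1$ is closed in $Z_1$ hence closed in $R$ (as $Z_1$ is closed). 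Thus (3) is open, and symmetrically (4) with $\epsilon_-$ in place of $\epsilon_+$ and $v_2$ in place of $v_1$.

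Next I would handle condition (2), which is a statement about relevant base points of length $d_0$ lying on degree-$(0,d_0)$ rational tails whenever such tails exist on the geometric fiber. The key point is that the locus of base points in the universal curve $\mathcal{C}_0\to R$, together with their lengths, varies in a constructible/semicontinuous way (lengths are computed from the order of vanishing of sections of $L_{m\theta}$, as in the definition of $l(x)$), and the locus where a geometric fiber carries a degree-$(0,d_0)$ rational tail is a closed condition on the weighted curve side (it is the preimage of $\mathfrak{Z}_1\subset\mathfrak{M}^{\text{wt,ss}}_{g,n,d}$). On the complement of that closed locus condition (2) is vacuous, so (2) fails only on a locally closed subset whose closure avoids a dense open; more carefully, I would argue that "(2) fails" is a closed condition by exhibiting its complement as an open subset: away from $\mathfrak{Z}_1$ it holds trivially, and over (the preimage of) $\mathfrak{Z}_1$ one checks that the condition of a marked base point being swept into the rational tail component is open since specialization can only create, not destroy, a rational tail carrying the base point. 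Condition (1), that every constant tail is an entangled tail, is analogous: being a constant tail is a closed condition on the tail (it requires a length-$d_0$ base point), while being entangled is recorded by the divisors $\mathcal{D}_k\subset\tilde{\mathfrak{M}}_{g,n,d}$ of Section \ref{entails}, and I would show the locus where some constant tail fails to be entangled is a closed subset of $R$, using properness of the relevant loci and the fact that entanglement is stable under generization within the stratum.

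The main obstacle I expect is condition (1) (and to a lesser extent (2)): the subtlety is that "constant tail" and "entangled tail" are conditions attached to specific rational-tail components of the geometric fibers, and one must make precise that these loci in the universal curve behave well in families — in particular that a family of curves can have the set of degree-$(0,d_0)$ tails jump only by degeneration, and that the entanglement data pulled back from $\tilde{\mathfrak{M}}_{g,n,d}$ is compatible with the base-point data pulled back from $Q^{\text{ss}}_{g,n}(X^{ss},\tilde\beta)$. Once one sets up the correspondence between tails, their degrees, their base points, and the divisors $\mathcal{D}_k$ carefully, openness follows by the usual semicontinuity arguments; I would reference \cite[Section 3]{YZ} for the parallel statement in the GIT-quotient case and explain that the fiber-product description \eqref{eq10} reduces our situation to that one together with the evident openness on the $\mathfrak{M}_{g,n}^{tw,\text{pre}}(Y,\beta')$ factor.
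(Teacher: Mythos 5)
Your proposal lands on essentially the same argument as the paper. For conditions (3) and (4) your reasoning is a careful elaboration of the paper's one-line claim that $\epsilon$-stability is an open condition on prestable quasimaps: the bad locus $\{v_1=0\} \cap \{\text{not }\epsilon_+\text{-stable}\}$ is closed in the closed subscheme $\{v_1=0\}$, hence closed in $R$ — this is correct and more explicit than the paper. For conditions (1) and (2) the paper dispenses with the direct semicontinuity analysis entirely: it cites \cite[Lemma 4.1.4]{YZ} for the assertion that (1) and (2) are open on $MQ^{\sim}_{g,n}(X^{ss},\tilde\beta)$, and then pulls back openness through the fiber-product description \eqref{eq10}. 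You do eventually reach the same reduction in your last sentence (citing ``Section 3'' of YZ, where the precise reference is Lemma 4.1.4 in Section 4), so the logical backbone is the same. The direct semicontinuity sketches you offer for (1) and (2) before that reduction are the weak part of the proposal: the assertion that ``specialization can only create, not destroy, a rational tail carrying the base point'' and especially that ``entanglement is stable under generization within the stratum'' are not established and would need real proof — entanglement is a blow-up phenomenon recorded by the $\mathfrak{C}_k^*$ strata and $\mathcal{D}_k$ divisors, and its interaction with the base-point data is exactly what \cite[Lemma 4.1.4]{YZ} is engineered to control. So the proposal is correct as a plan only because it falls back on the YZ reduction; if you wanted a self-contained argument you would essentially have to reprove that lemma.
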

\begin{proof}
Conditions (3) and (4) are open conditions since $\epsilon$-stability is an open condition in the space of prestable quasimaps. Conditions (1) and (2) are open: notice that they are open conditions in $MQ^{\sim}_{g,n}(X^{ss},\tilde{\beta})$ by \cite[Lemma 4.1.4]{YZ} and hence open in $MQ^{\sim}_{g,n}([E^{ss}/G],{\beta})$ by equation (\ref{eq10}). 
\end{proof}

Let 
\[
\xi =((\phi:C\rightarrow C_0,f,x),(C_0,x,[u],e,N,v_1,v_2))\in MQ^{\sim}_{g,n}([E^{ss}/G],\beta)(\C)
\]
be an $\epsilon_0$-semistable quasimap with calibrated tails. Let $E\subset C_0$ be a degree-$(0,d_0)$ rational tail with $y\in E$ as its unique node (or marking).
\begin{definition}
	$E$ is said to be a \textit{fixed tail} if $\text{Aut}(E,y,u\vert_{E})$ is infinite.
\end{definition}

\begin{lemma}\label{infaut}
	The family of quasimaps to  $[E^{ss}/G]$ with entangled tails
	\[
	\eta=((\phi:C\rightarrow C_0,f,x),(C_0,x,[u],e))
	\]
	obtained from $\xi$ has infinitely many automorphisms if and only if
	\begin{enumerate}
		\item there is at least one degree $(0,d_0)$ rational tail, and
		
		\item each entangled tail is a fixed tail.
	\end{enumerate}
Moreover, when $\text{Aut}(\eta)$ is infinite, its identity component $\Gamma\subset \text{Aut}(\eta)$ is isomorphic to $\C^*$ and acts on each $T_{y_i}E_i$ by the same nonzero weight for $i=1,\cdots,k$.
\end{lemma}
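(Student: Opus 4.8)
The plan is to reduce the statement to the corresponding fact for quasimaps with entangled tails to $X^{ss}=[\tilde{V}^{ss}/(S\times G)]$ proved in \cite[\S4.1]{YZ}, exploiting that the $Y$-data $f$ is rigid along the only tails that matter. First I would record that any degree-$(0,d_0)$ rational tail $E\subset C_0$ has vanishing first degree, $d'_E=0$; by condition (2) of Definition \ref{def2} the contraction $\phi$ collapses exactly the $f$-nonconstant rational tails, so $\phi$ is an isomorphism over $E$, no $f$-contracted rational tree is attached along $E$, and $f$ is constant there. Consequently, for the forgetful morphism $Q^{\sim}_{g,n}([E^{ss}/G],\beta)\to Q^{\sim}_{g,n}(X^{ss},\tilde{\beta})$ of \eqref{cd9} sending $\eta=((\phi,f,x),(C_0,x,[u],e))$ to $((C_0,x),[u],e)$, the kernel of $\Aut(\eta)\to\Aut((C_0,x),[u],e)$ consists of automorphisms of the $f$-contracted rational trees fixing $f$, each of which is a stable map to $Y$, so the kernel is finite; conversely the identity component of $\Aut((C_0,x),[u],e)$ is generated by scalings supported on the degree-$(0,d_0)$ tails, where $C\cong C_0$ and $f$ is constant, and hence lifts to $\Aut(\eta)$. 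Thus the morphism induces an isomorphism of identity components, and since conditions (1), (2) and the weights on the $T_{y_i}E_i$ depend only on $(C_0,[u],e)$, it is enough to prove the statement for quasimaps with entangled tails to $X^{ss}$.

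For that I would follow \cite[\S4.1]{YZ} in three steps. \textbf{(a)} By $\epsilon_0$-semistability, every component of $C_0$ other than a degree-$(0,d_0)$ rational tail either carries a marking, has positive genus, is a rational bridge of nonzero degree, or is a rational tail of degree $>(0,d_0)$; on each of these the automorphism group of $[u]|$ is finite and reduced (Proposition \ref{Prop2}), since a rational tail with infinite $[u]$-automorphisms must carry a base point of length equal to its degree, which $\epsilon_0$-semistability bounds by $d_0$, forcing the degree to be exactly $(0,d_0)$. Hence $\Aut(\eta)^\circ$ acts trivially away from the degree-$(0,d_0)$ rational tails and fixes all markings and nodes; in particular if there is no such tail then $\Aut(\eta)^\circ$ is trivial, which is the necessity of (1). \textbf{(b)} For a degree-$(0,d_0)$ tail $E_i$ with unique special point $y_i$, the subgroup of $\Aut(E_i,y_i)$ preserving $u|_{E_i}$ has positive-dimensional identity component exactly when $E_i$ is a fixed tail; since prestability keeps the length-$d_0$ base point of such a tail disjoint from $y_i$, this subgroup then has two distinguished fixed points on $E_i\cong\P^1$, so its identity component is the torus $\C^*$ scaling $E_i$ while fixing $y_i$ and the base point. \textbf{(c)} Finally one incorporates the entanglement datum $e$: étale-locally near $\xi$, by the iterated blow-up description of Section \ref{entails} and the étale gluing morphism \eqref{spliteq}, $e$ lies over a point of $\mathfrak{Z}_{(k)}$ whose normal coordinates are the node-smoothing parameters of the entangled tails $E_{i_1},\dots,E_{i_k}$, and an automorphism scaling $T_{y_{i_j}}E_{i_j}$ with weight $w_j$ acts on $e$ only through $(w_1,\dots,w_k)$ on these coordinates, fixing the other tails and the rest of the curve; preservation of $e$ is then equivalent to $w_1=\dots=w_k=:w$. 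Combining (b) and (c): if every entangled tail is a fixed tail, the diagonal scaling defines a subgroup $\C^*\subset\Aut(\eta)$, and $\Aut(\eta)^\circ$, being a positive-dimensional subtorus of $\prod_j\Aut(E_{i_j},y_{i_j},u|_{E_{i_j}})^\circ$ cut out by the diagonal relation, equals this $\C^*$, acting on each $T_{y_{i_j}}E_{i_j}$ with the common nonzero weight $w$; if some entangled tail is not a fixed tail, (b) forces its weight to vanish, so $w=0$ and $\Aut(\eta)^\circ$ is trivial. This yields both the equivalence and the ``moreover''.

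The main obstacle is step (c): converting ``$e$ is fixed'' into the equal-weights condition and separating the contribution of entangled from non-entangled degree-$(0,d_0)$ tails. This rests on a careful local analysis of $\tilde{\mathfrak{M}}_{g,n,d}$ as an iterated blow-up along coordinate loci, i.e.\ the content of \cite[\S2.2]{YZ}; once the rigidity of $f$ along degree-$(0,d_0)$ tails recorded in the first paragraph is in hand, that analysis carries over to the present setting.
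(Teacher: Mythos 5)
Your proposal is correct and takes essentially the same route as the paper: both arguments reduce the lemma to the corresponding statement for quasimaps with entangled tails to $X^{ss}$, namely \cite[Lemma 4.1.8]{YZ}, by observing that the $Y$-datum $f$ can only contribute finitely many automorphisms (it is constant on the relevant degree-$(0,d_0)$ tails, while on $f$-nonconstant components the automorphisms preserving $f$ form a finite group), so the identity component of $\Aut(\eta)$ is governed entirely by the data $(C_0,x,[u],e)$. The only real difference is presentational: the paper cites \cite[Lemma 4.1.8]{YZ} for both implications and for the weight statement, whereas your steps (a)--(c) re-sketch that lemma's proof.
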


\begin{proof}
First we show the ``only if'' part. Consider the union of $f$-nonconstant irreducible components  $C''\subset C$. By prestability condition of quasimaps,  for any irreducible rational component $C'\subset C''$ in this union ($f|_{C'}$ is non-constant), $C'$ gets contracted under the contraction map $\phi$. Hence, $(\phi(C''),x,[u]|_{C''})$ is a $0^+$-stable quasimap to $X$ and therefore has finitely many automorphisms. This means $\eta|_{C'}$ has infinitely many automorphism only if $f|_{C'}$ is constant. 
	
So we can assume that $f$ is constant on the underlying curve $C$. In particular, the contraction map $\phi$ is an isomorphism and $\eta$ has infinitely many automorphisms only if $(C,x,[u],e)$ has infinitely many automorphisms. Then we get ``only if'' implication  by the ``only if" part of \cite[Lemma 4.1.8]{YZ}.
	
The ``if'' part and rest of the lemma follows directly from the ``if'' part of \cite[Lemma 4.1.8]{YZ}.
\end{proof}

\begin{lemma}\label{lem28}
If $\xi$ is $\epsilon_0$-stable, then $\xi$ has finitely many automorphisms.
\end{lemma}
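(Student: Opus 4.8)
The plan is to leverage the analysis of automorphisms already carried out in Lemma~\ref{infaut} together with the extra rigidity provided by the calibration data $(N,v_1,v_2)$. By Lemma~\ref{infaut}, if the underlying $\epsilon_0$-semistable quasimap with entangled tails $\eta$ has infinitely many automorphisms, then there is at least one degree-$(0,d_0)$ rational tail, each entangled tail is a fixed tail, and the identity component $\Gamma\cong\C^*$ of $\Aut(\eta)$ acts on each $T_{y_i}E_i$ by the same nonzero weight. So the first step is to reduce to this situation: assuming $\xi$ is $\epsilon_0$-stable but $\Aut(\xi)$ is infinite, the forgetful map $\Aut(\xi)\to\Aut(\eta)$ has finite kernel (the calibration data $(N,v_1,v_2)$ on the base contributes only a finite stabilizer beyond scaling, which is already accounted for), so $\Aut(\eta)$ must be infinite and we may invoke the structure above.

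**Key steps.** Having fixed a degree-$(0,d_0)$ rational tail $E$, I would split into the two cases governed by the calibration section. First, recall that the calibration bundle $\mathbb{M}_R$ restricted to such a curve records (via $\mathcal{O}(-\mathfrak{Z}_1)$, or the cotangent line in the exceptional $(0,1,d_0)$ case) the smoothing parameters of the nodes where entangled degree-$(0,d_0)$ tails are attached; the $\C^*\cong\Gamma$ acts on $\mathbb{M}_R$ with a definite nonzero weight, since by Lemma~\ref{infaut} it acts with the same nonzero weight on every $T_{y_i}E_i$. Now invoke condition~(3) of Definition~\ref{scal}: if $v_1(r)=0$ then the underlying quasimap is $\epsilon_+$-stable, hence has finite automorphisms by Proposition~\ref{Prop2} (applied in the entangled setting via Lemma~\ref{infaut}), contradiction; so $v_1(r)\neq 0$. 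Similarly condition~(4) forces $v_2(r)\neq 0$ whenever $v_1(r)=0$ would otherwise be the only obstruction --- but in fact the right dichotomy is: $v_1$ and $v_2$ have no common zero, so at least one is nonzero, and $\Gamma$ acts on the fiber $N_r$ trivially while acting on $\mathbb{M}_{R,r}\otimes N_r$ with nonzero weight; hence a nonzero $v_1(r)$ cannot be $\Gamma$-invariant and a nonzero $v_2(r)$ forces $\Gamma$ to act trivially on $N_r$. Combining, if $v_1(r)\neq 0$ we get a contradiction with $\Gamma$-invariance of the point $[v_1(r):v_2(r)]\in\P(\mathbb{M}\oplus\mathcal{O})$ unless $v_1(r)=0$; and if $v_1(r)=0$ then $v_2(r)\neq 0$ and condition~(4) applies, forcing $\epsilon_-$-stability of the underlying quasimap, again contradicting finiteness of automorphisms only if there is no degree-$(0,d_0)$ tail. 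Since we assumed such a tail exists, every branch yields a contradiction.

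**Main obstacle.** The delicate point is bookkeeping the $\Gamma$-action on the calibration line $\mathbb{M}_R$ and verifying it is nontrivial precisely when $\Aut(\eta)$ is infinite --- this is where one must carefully trace through the inflated/entangled blowup construction of $\tilde{\mathfrak{M}}_{g,n,d}$ and the identification of $\mathbb{M}_{\tilde{\mathfrak{M}}_{g,n,d}}$ as $\mathcal{O}(-\mathfrak{Z}_1)$ (resp.\ the cotangent line), and match its weight against the common weight on $T_{y_i}E_i$ from Lemma~\ref{infaut}. Once that weight computation is in hand, the argument is a direct case analysis on $(v_1(r),v_2(r))$. For the base-bundle side $[E^{ss}/G]$ rather than $X^{ss}$, the fiber-product description \eqref{eq10} shows the calibration data lives entirely on the $X^{ss}$-factor, so no new phenomena arise over $Y$; this reduces the whole statement to the corresponding analysis for $MQ^{\sim}_{g,n}(X^{ss},\tilde\beta)$, which is \cite[Lemma~4.1.9]{YZ} (or its analogue), and I would cite that for the final contradiction after setting up the reduction. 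Thus the proof is: reduce to $X^{ss}$ via \eqref{eq10}, apply Lemma~\ref{infaut} to locate the obstruction in a degree-$(0,d_0)$ tail, and use the calibration stability conditions (1)--(4) of Definition~\ref{scal} to rule out each case.
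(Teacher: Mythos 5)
Your approach matches the paper's, which simply defers to Lemma~\ref{infaut} together with an argument analogous to \cite[Lemma 4.1.10]{YZ} (you cite Lemma~4.1.9 of that paper, which is off by one, but the structural content you describe is the right one). The key steps you identify --- inject $\Aut(\xi)$ into $\Aut(\eta)$ via the rigidity of the calibration data, invoke Lemma~\ref{infaut} to produce $\Gamma\cong\C^*$ acting with the same nonzero weight on every $T_{y_i}E_i$ and hence nontrivially on the calibration line $\mathbb{M}_R$, and then split into cases on $(v_1,v_2)$ using conditions (3)--(4) of Definition~\ref{scal} --- are exactly the content behind the paper's terse citation. The reduction to the $X^{ss}$-side via the fiber-product description \eqref{eq10} is also consistent with the paper's treatment, since the calibration and entanglement data are pulled back from $\tilde{\mathfrak{M}}_{g,n,d}$ and only depend on the $X^{ss}$-factor; Lemma~\ref{infaut} in the paper already handles the $f$-nonconstant components over $Y$.

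One slip in the final paragraph needs correcting: you write ``if $v_1(r)=0$ then $v_2(r)\neq 0$ and condition~(4) applies, forcing $\epsilon_-$-stability,'' but condition~(4) of Definition~\ref{scal} is triggered by $v_2(r)=0$, not $v_1(r)=0$. The vanishing of $v_1$ triggers condition~(3) and forces $\epsilon_+$-stability, which you in fact stated correctly a few lines earlier; the two conditions have been swapped in the ``Combining'' sentence. Once that is corrected, the clean three-way case analysis is: $v_1=0$ gives $\epsilon_+$-stability by condition~(3), hence $\Aut(\eta)$ finite by Proposition~\ref{Prop2}; $v_2=0$ gives $\epsilon_-$-stability by condition~(4), which has no degree-$(0,d_0)$ rational tails, contradicting what Lemma~\ref{infaut} produced; and $v_1,v_2\neq 0$ gives a $\Gamma$-non-fixed point $[v_1:v_2]\in\P(\mathbb{M}\oplus\O)$ because $\Gamma$ acts with nonzero weight on $\mathbb{M}_R$, so $\Gamma$ cannot lie in $\Aut(\xi)$. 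This is the same proof as the paper's.
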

\begin{proof}
This follows from Lemma \ref{infaut} and an argument similar to \cite[Lemma 4.1.10]{YZ}.
\end{proof}

\begin{Prop}
$MQ^{\epsilon_0}_{g,n}([E^{ss}/G],\beta)$ is a Deligne--Mumford stack of finite type over $\C$.
\end{Prop}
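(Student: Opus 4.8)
The plan is to exhibit $MQ^{\epsilon_0}_{g,n}([E^{ss}/G],\beta)$ as an open substack of an Artin stack of finite type over $\C$ all of whose geometric automorphism groups are finite, and then to conclude Deligne--Mumfordness from the characteristic-zero criterion.

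First I would handle the ambient stack. By Definition \ref{def21} we have $MQ^{\sim}_{g,n}([E^{ss}/G],\beta)=Q^{\sim}_{g,n}([E^{ss}/G],\beta)\times_{\tilde{\mathfrak{M}}_{g,n,d}}M\tilde{\mathfrak{M}}_{g,n,d}$, where (Definition \ref{sscabun}) the projection $M\tilde{\mathfrak{M}}_{g,n,d}=\P_{\tilde{\mathfrak{M}}_{g,n,d}}(\mathbb{M}_{\tilde{\mathfrak{M}}_{g,n,d}}\oplus\O_{\tilde{\mathfrak{M}}_{g,n,d}})\to\tilde{\mathfrak{M}}_{g,n,d}$ is a $\P^1$-bundle. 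Using the natural morphism $Q^{\sim}_{g,n}([E^{ss}/G],\beta)\to\tilde{\mathfrak{M}}_{g,n,d}$ from diagram (\ref{cd9}) and base change, the induced map $MQ^{\sim}_{g,n}([E^{ss}/G],\beta)\to Q^{\sim}_{g,n}([E^{ss}/G],\beta)$ is again a $\P^1$-bundle, in particular a representable, smooth, proper morphism of finite type. Since $Q^{\sim}_{g,n}([E^{ss}/G],\beta)$ is an Artin stack of finite type over $\C$ by Lemma \ref{lem16}, it follows that $MQ^{\sim}_{g,n}([E^{ss}/G],\beta)$ is an Artin stack of finite type over $\C$. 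By Lemma \ref{lem25} the $\epsilon_0$-stability condition is open, so $MQ^{\epsilon_0}_{g,n}([E^{ss}/G],\beta)$ is an open substack of $MQ^{\sim}_{g,n}([E^{ss}/G],\beta)$, hence itself an Artin stack of finite type over $\C$.

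Finally, to upgrade ``Artin'' to ``Deligne--Mumford'' I would invoke the diagonal criterion. By Lemma \ref{lem28}, every $\epsilon_0$-stable quasimap with calibrated tails has a finite automorphism group, i.e.\ the automorphism group scheme of every geometric point of $MQ^{\epsilon_0}_{g,n}([E^{ss}/G],\beta)$ is finite; since we work over $\C$, such a group scheme is automatically reduced, so the diagonal is unramified and an Artin stack locally of finite type over $\C$ with unramified diagonal is Deligne--Mumford. If one prefers not to appeal to this general fact, reducedness of the stabilizers can be checked by hand exactly as in Proposition \ref{Prop2}: split the underlying curve into the components on which $f$ is non-constant (where $\phi$ contracts the rational ones and one is left with a stable map to $Y$) and those on which $f$ is constant (where $\phi$ is an isomorphism and one is left with an $\epsilon_{\pm}$-stable quasimap to $X^{ss}$ with calibrated tails), each case having finite reduced automorphisms by \cite{AV} and \cite{YZ} respectively.

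There is essentially no remaining obstacle, since the substantive content has been isolated in Lemmas \ref{lem16}, \ref{lem25}, and \ref{lem28}; the only point requiring genuine care is the last step, where the passage from ``finite automorphism groups'' to ``Deligne--Mumford'' uses the characteristic-zero hypothesis essentially, so one must either cite the standard criterion precisely or reprove reducedness of the stabilizers directly as indicated above.
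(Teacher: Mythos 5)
Your proposal is correct and follows essentially the same route as the paper's proof, which likewise deduces that $MQ^{\sim}_{g,n}([E^{ss}/G],\beta)$ is an Artin stack of finite type from Lemma \ref{lem16} and Definition \ref{def21}, and then concludes via Lemmas \ref{lem25} and \ref{lem28}. You have simply made explicit the $\P^1$-bundle observation and spelled out the passage from finite automorphism groups in characteristic zero to the Deligne--Mumford property, which the paper leaves implicit.
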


\begin{proof}
	By Lemma \ref{lem16}, $Q^{\sim}_{g,n}([E^{ss}/G],\beta)$ is an Artin stack of finite type. By Definition \ref{def21}, so is $MQ^{\sim}_{g,n}([E^{ss}/G],\beta)$. Finally by Lemmas \ref{lem25} and \ref{lem28}, $MQ^{\epsilon_{0}}_{g,n}([E^{ss}/G],\beta)$ is a Deligne--Mumford stack of finite type.
\end{proof}

\subsubsection{Virtual fundamental class}\label{obthrmaster}
Similar to Section \ref{obthr}, we write the master space as fiber product of two spaces who have relative obstruction theory over a common space (in this case, the common space is $\mathfrak{Bun}^{\text{tw,wt}}_S$) and then combine these two obstruction theory to get an obstruction theory for the master space. 
We define a moduli stack $$\mathcal{G}^{\text{tw},d'}_{g,n,{\beta''}}$$ 
parameterizing objects of the form 
$(C^{\text{wt}}_0,x,u'')$ where $C^{\text{wt}}_0$ is a weighted twisted curve such that all degrees add up to $d'$ and $(C_0,x,u'')$ is an $n$-marked genus $g$, quasimap to $[V^{ss}/(S\times G)]$ of degree ${\beta''}$. Here $C_0$ is the underlying twisted curve without the weights. 

The map $Q_{g,n}^{\text{ss}}(X^{ss},(d',{\beta''}))\rightarrow\mathfrak{M}^{\text{wt,ss}}_{g,n,d}$ factors through $\mathcal{G}^{\text{tw},d'}_{g,n,{\beta''}}$. Moreover, we have a fibered diagram 
\[
\begin{tikzcd}
	{Q_{g,n}^{\text{ss}}(X^{ss},(d',{\beta''}))}\arrow[r]\arrow[d]&{\mathcal{G}^{\text{tw},d'}_{g,n,{\beta''}}}\arrow[d]\arrow[r]&{\mathfrak{M}^{\text{wt,ss}}_{g,n,d}}\\
	{Q_{g,n}^{\text{pre}}([W^{ss}/S],d')}\arrow[r]&{\mathfrak{Bun}^{\text{tw,wt}}_S}.
\end{tikzcd}
\]
Here $\mathfrak{Bun}^{\text{tw,wt}}_S:=\mathfrak{Bun}^{\text{tw}}_S\times_{\mathfrak{M}^{\text{ss}}_{g,n,d}}\mathfrak{M}^{\text{wt,ss}}_{g,n,d'}$ is a stack of principal $S$-bundles over weighted twisted curves.  $Q_{g,n}^{\text{ss}}(X^{ss},(d',{\beta''}))\rightarrow\mathcal{G}^{\text{tw},d'}_{g,n,{\beta''}}$ is obtained by forgetting the data of quasimap to $[W^{ss}/S]$ along with assigning degree of its class on first coordinate as weight. In particular, $((C_0,x),[u])\mapsto ((C_0^{wt},x),u'')$ where $u''$ is composition of $C_0 \xrightarrow{[u]} X^{ss} \rightarrow [V^{ss}/(S\times G)]$ of class $\beta''$. Moreover, we also assign a weight on irreducible component given by class $d'$ (See Remark \ref{rem3.6}). Finally, $\mathcal{G}^{\text{tw},d'}_{g,n,{\beta''}}\rightarrow \mathfrak{M}^{\text{wt,ss}}_{g,n,d}$ is given by forgetting the morphisms and changing the weights on each irreducible curve $C$ from $d'\mapsto (d',\beta_{C}''(L_{\theta}))$.

Define 
\[
\tilde{M\mathcal{G}}^{\text{tw},d'}_{g,n,{\beta''}}:=\mathcal{G}^{\text{tw},d'}_{g,n,{\beta''}}\times_{\mathfrak{M}^{\text{wt,ss}}_{g,n,d}}M\tilde{\mathfrak{M}}_{g,n,d}.
\]
Then by diagram (\ref{cd9}) and the definition of $MQ^{\sim}_{g,n}([E^{ss}/G],\beta)$, we have
\[
MQ^{\sim}_{g,n}([E^{ss}/G],\beta)=\mathfrak{M}^{\text{tw,pre}}_{g,n}(Y,\beta')\times_{\mathfrak{Bun}^{\text{tw,wt}}_S} \tilde{M\mathcal{G}}^{\text{tw},d'}_{g,n,{\beta''}}.
\]

Let $\pi:\mathfrak{C_0}\rightarrow \tilde{M\mathcal{G}}^{\text{tw},d'}_{g,n,{\beta''}}$ be the universal curve, and let $u:\mathfrak{C_0}\rightarrow[V/(S\times G)]$ be the universal map. Similar to \cite[Section 4.2]{YZ}, the forgetful morphism $\tilde{M\mathcal{G}}^{\text{tw},d'}_{g,n,{\beta''}}\rightarrow M\tilde{\mathfrak{M}}_{g,n,d}$ admits a relative perfect obstruction theory 
\[
(R\pi_*(u^*\mathbb{T}_{[V/(S\times G)]}))^{\vee}\rightarrow \mathbb{L}_{\tilde{M\mathcal{G}}^{\text{tw},d'}_{g,n,{\beta''}}/M\tilde{\mathfrak{M}}_{g,n,d}}.
\]
Let $\mathbb{E}_{\tilde{M\mathcal{G}}^{\text{tw},d'}_{g,n,{\beta''}}}$ be the absolute perfect obstruction theory on $\tilde{M\mathcal{G}}^{\text{tw},d'}_{g,n,{\beta''}}$. Now as $\mathfrak{Bun}^{\text{tw}}_S$ is a smooth Artin stack, we have a perfect obstruction theory $\mathbb{E}_{\mu_1}$ relative to the forgetful morphism 
\[
\mu_1:\tilde{M\mathcal{G}}^{\text{tw},d'}_{g,n,{\beta''}}\rightarrow \mathfrak{Bun}^{\text{tw}}_S,
\]
defined as 
\[
\mathbb{E}_{\mu_1}:=\text{Cone}(\mu_1^*\mathbb{L}_{\mathfrak{Bun}^{\text{tw}}_S}\rightarrow \mathbb{E}_{\tilde{M\mathcal{G}}^{\text{tw},d'}_{g,n,{\beta''}}}).
\]
As it is defined by forgetting the weights, $\mathfrak{Bun}^{\text{tw,wt}}_S\rightarrow \mathfrak{Bun}^{\text{tw}}_S$ is \'etale, and we obtain a perfect obstruction theory relative to $\mathfrak{Bun}^{\text{tw,wt}}_S$ which we again denote by $\mathbb{E}_{\mu_1}$.

Let $\mathbb{E}_{\mu_2}$ be the perfect relative obstruction theory of $\mathfrak{M}^{\text{tw,pre}}_{g,n}(Y,\beta')$ relative to 
\[
\mu_2:\mathfrak{M}^{\text{tw,pre}}_{g,n}(Y,\beta')\rightarrow \mathfrak{Bun}^{\text{tw,wt}}_S.
\]
Note that we get a perfect obstruction theory relative to $\mathfrak{Bun}^{\text{tw}}_S$ from Section \ref{obthr}, which again can be lifted to $\mathfrak{Bun}^{\text{tw,wt}}_S$.

We define (suppressing obvious pullbacks)
\[
\mathbb{E}_{\mu}:=(\mathbb{E}_{\mu_1}\oplus\mathbb{E}_{\mu_2})|_{MQ^{\epsilon_0}_{g,n}([E^{ss}/G],\beta)}\in D^b(MQ^{\epsilon_0}_{g,n}([E^{ss}/G],\beta))
\]
as the perfect relative obstruction theory of $MQ^{\epsilon_0}_{g,n}([E^{ss}/G],\beta)$ over $\mathfrak{Bun}^{\text{tw,wt}}_S$. 
This defines a virtual fundamental class 
\[
[MQ^{\epsilon_0}_{g,n}([E^{ss}/G],\beta)]^{\text{vir}}\in A_*(MQ^{\epsilon_0}_{g,n}([E^{ss}/G],\beta)).
\]

From the relative obstruction theory, we obtain an absolute one defined as 
\[
\mathbb{E}_{MQ}:=\text{Cone}(\mathbb{E}_{\mu}(-1)\rightarrow\mathbb{L}_{\mu}(-1)\rightarrow\mu^*\mathbb{L}_{\mathfrak{Bun}^{\text{tw,wt}}_S}),
\]
see \cite[Remark 4.5.3]{ckm2014stable}.

\subsection{Properness of master space}\label{subsec:proper_master_space}
The purpose of this Subsection is to prove the following
\begin{Prop}
	$MQ^{\epsilon_0}_{g,n}([E^{ss}/G],\beta)$ is proper over $\C$.
\end{Prop}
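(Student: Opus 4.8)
The plan is to verify the valuative criterion of properness using a discrete valuation ring $R$ over $\C$ with fraction field $K$, writing $\Delta = \Spec R$, closed point $\bullet$, and $\Delta_0 = \Spec K$. We already know from Section~\ref{obthrmaster} that $MQ^{\epsilon_0}_{g,n}([E^{\text{ss}}/G],\beta)$ is a Deligne--Mumford stack of finite type, so separatedness and existence of limits (after possibly no base change, since the stack is of finite type with finite automorphisms) will suffice. The overall approach is to reduce to the corresponding statement for the target $X^{\text{ss}} = [\tilde{V}^{\text{ss}}/(S\times G)]$, i.e. to the properness of $MQ^{\sim}_{g,n}(X^{\text{ss}},\tilde{\beta})$-type master spaces established in \cite[Section 4.3]{YZ}, via the fiber-product description
\[
MQ^{\sim}_{g,n}([E^{\text{ss}}/G], \beta)\cong \mathfrak{M}_{g,n}^{tw,\text{pre}}(Y,\beta')\times_{Q_{g,n}^{\text{pre}}([W^{\text{ss}}/S],d')}MQ^{\sim}_{g,n}(X^{\text{ss}},\tilde{\beta})
\]
of equation~(\ref{eq10}), exactly paralleling the strategy used in Lemma~\ref{l6} and Proposition~\ref{Prop1} for the plain $\epsilon$-quasimap moduli.

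\textbf{Separatedness.} First I would prove separatedness. Given two families $\xi_1,\xi_2 \in MQ^{\epsilon_0}_{g,n}([E^{\text{ss}}/G],\beta)(\Delta)$ that agree over $\Delta_0$, I would use the isomorphism~(\ref{eq10}) to split each $\xi_i$ into a stable map part $((C_i,x_i),f_i)\to Y$ (after adding auxiliary sections on surviving rational tails and bridges to rigidify, as in the proof of Proposition~\ref{Prop1}) and a quasimap-with-calibrated-tails part to $X^{\text{ss}}$, together with the matching data of $S$-bundles over $Q^{\text{pre}}_{g,n}([W^{\text{ss}}/S],d')$. The stable-map factor is separated because $\overline{M}_{g,\bullet}(Y,\beta')$ is proper; the calibrated-tails factor over $X^{\text{ss}}$ is separated by \cite[Proposition~4.3.1]{YZ} (or the relevant separatedness statement therein); the map to $Q^{\text{pre}}_{g,n}([W^{\text{ss}}/S],d')$ is determined. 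Since a fiber product of separated morphisms over a separated base is separated, the two families are isomorphic over $\Delta$ after forgetting the auxiliary sections. One subtlety to track is that an $\epsilon_0$-stable object may have $\epsilon_0$-\emph{unstable} image in $Q^{\text{ss}}_{g,n}(X^{\text{ss}},\tilde{\beta})$ (the $f$-stable components), so I would handle those components exactly as in Proposition~\ref{Prop1}, stabilizing with sections, applying separatedness for $0^+$- or $\epsilon_\pm$-stable targets, and then forgetting the sections; the calibration data $(N,v_1,v_2)$ is carried along and its limit is unique because $M\tilde{\mathfrak{M}}_{g,n,d}\to\tilde{\mathfrak{M}}_{g,n,d}$ is proper.

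\textbf{Existence of limits.} Next, for a family $\xi_0 \in MQ^{\epsilon_0}_{g,n}([E^{\text{ss}}/G],\beta)(\Delta_0)$ I would construct a limit by the following steps, again mirroring \cite[Section 4.3]{YZ} and Proposition~\ref{Prop1}: (1) extend the underlying weighted-twisted-curve-with-entangled-tails part to a family over $\Delta$ using properness of $\tilde{\mathfrak{M}}_{g,n,d}$, together with the calibration data using properness of $M\tilde{\mathfrak{M}}_{g,n,d}\to\tilde{\mathfrak{M}}_{g,n,d}$; (2) extend the quasimap $[u]$ to $X^{\text{ss}}$ over this extended curve, after stabilizing rational tails/bridges by sections so the relevant $0^+$- or $\epsilon_\pm$-quasimap moduli for $X^{\text{ss}}$ are proper, then removing the sections; (3) extend the stable map $f$ to $Y$ using properness of $\overline{M}_{g,\bullet}(Y,\beta')$ on a curve with the requisite sections, then contract $f$-nonconstant rational tails to recover $\phi:\overline{C}\to\overline{C}_0$; (4) reconcile the two extensions over the central fiber — contracting rational components that are constant under both $\overline{f}$ and $[\overline{u}]\circ\overline{\phi}$, and performing a final modification of $[u]$ on the central fiber to promote the limit from an $\epsilon_\pm$-stable extension to an $\epsilon_0$-stable one with calibrated tails (choosing the value of $v_1,v_2$ in the central fiber appropriately: $v_1(\bullet)=0$ if the natural limit is $\epsilon_+$-stable, $v_2(\bullet)=0$ if it is $\epsilon_-$-stable, and a generic value otherwise), checking conditions (1)--(4) of Definition~\ref{scal}. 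Finally I would verify that the resulting object lies in $MQ^{\epsilon_0}_{g,n}([E^{\text{ss}}/G],\beta)(\Delta)$ and restricts to $\xi_0$ over $\Delta_0$.

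\textbf{Main obstacle.} I expect the hard part to be step (4): the delicate interplay between the calibration data $(N,v_1,v_2)$, the entanglement of the degree-$(0,d_0)$ tails, and the $\epsilon_0$-stability conditions (1)--(2) of Definition~\ref{scal} forbidding non-entangled constant tails. One must choose the central-fiber limit of $v_1,v_2$ so that precisely the correct stratum of Definition~\ref{scal} is met, while simultaneously ensuring that any length-$d_0$ relevant base points land inside the degree-$(0,d_0)$ tails and that every constant tail is entangled; this is exactly where the construction of $\tilde{\mathfrak{M}}_{g,n,d}$ via iterated blowups is used, and transporting Zhou's argument through the extra fiber-bundle structure over $Y$ (where the $f$-stable components behave differently) is the point requiring care. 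Everything else is either a formal consequence of properness of the factors in~(\ref{eq10}) or a routine adaptation of the arguments already given for Proposition~\ref{Prop1} and in \cite[Section 4.3]{YZ}.
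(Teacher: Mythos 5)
You have correctly identified the two global ingredients (properness of $M\tilde{\mathfrak{M}}_{g,n,d}\to\tilde{\mathfrak{M}}_{g,n,d}$ and the fiber-product description~(\ref{eq10}) relative to $Q^{\text{pre}}_{g,n}([W^{\text{ss}}/S],d')$), and you have correctly flagged that the subtlety lies in matching the calibration datum to the $\epsilon_0$-stability conditions (1)--(2) of Definition~\ref{scal}. But the mechanism you sketch for resolving that subtlety is not the one that actually works, and there are structural gaps.

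\textbf{The case split is missing.} The paper's proof proceeds in three genuinely different cases. Case~2 ($(g,n,d)=(0,1,d_0)$) is a base case where the whole curve is a single fixed rational tail and one just quotes properness for $X^{\text{ss}}$; Case~3 (the generic fiber already carries degree-$(0,d_0)$ entangled tails) requires \emph{detaching} each entangled tail $\mathcal{E}_i^*$, treating it as a separate genus-$0$, one-marked object, extending the complement as an $\epsilon_+$-stable quasimap, and gluing back, with uniqueness of the tail extensions coming from a separate argument. Your outline, which uniformly ``extends $[u]$ then extends $f$ then reconciles,'' does not apply in Case~3 at all: there is no $\epsilon_-$-stable (or uniformly $\epsilon_+$-stable) extension to start from because the generic fiber already violates the defining constraint of $\epsilon_\pm$. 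These cases are not formal variations of one another.

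\textbf{The ``choose $v_1,v_2$'' step has the causality backwards.} In Case~1, once the $\epsilon_-$-stable extension $\eta_-$ is fixed, the calibration data $(N,v_1,v_2)$ is \emph{forced}, not chosen, by properness of $M\tilde{\mathfrak{M}}_{g,n,d}\to\mathfrak{M}^{\text{wt,ss}}_{g,n,d}$. What one actually chooses is a totally-ramified finite base change $\Spec R'\to\Spec R$ and a \emph{modification} of $\eta_-$: i.e.\ the singularity type $(a_1,\cdots,a_k)$ specifying which $A_{a_i-1}$-singularities to blow up to produce degree-$(0,d_0)$ rational tails over the length-$d_0$ base points. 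The identity
\[
\text{ord}(\tilde{v}_1)-\text{ord}(\tilde{v}_2)=r\bigl(\text{ord}(v_{1-})-\text{ord}(v_{2-})\bigr)-r\sum_{i=1}^{k}a_i
\]
(equation~(\ref{ordrel})) then ties the vanishing orders of $\tilde{v}_1,\tilde{v}_2$ to the chosen $a_i$, and $\epsilon_0$-stability pins down $(a_1,\cdots,a_k)$ uniquely via a small system of inequalities involving $\delta=\text{ord}(v_{1-})-\text{ord}(v_{2-})$ and bounds $b_i$ from Lemma~\ref{unimod}. Without this order-relation bookkeeping you cannot show that a stable limit \emph{exists} when $v_{1-}(\bullet)=0$, nor that it is \emph{unique}. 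Your phrase ``choosing the value of $v_1,v_2$ in the central fiber appropriately'' has no operational content here.

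\textbf{Separatedness cannot be deduced from~(\ref{eq10}) alone.} The object in~(\ref{eq10}) is $MQ^{\sim}_{g,n}(\,\cdot\,)$, the moduli of $\epsilon_0$-\emph{semistable} quasimaps with calibrated tails, which is not separated (it is not even Deligne--Mumford, by Lemma~\ref{infaut}). Both uniqueness and existence of limits in the paper's proof come out of the same modification analysis, not from a formal ``fiber product of separated things is separated'' argument. Relatedly, your parenthetical that no finite base change is needed is incorrect: the paper explicitly asserts uniqueness only up to a totally ramified finite base change, and base change is genuinely used to realize the modification of prescribed singularity type.

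In short, the architecture (reduce through~(\ref{eq10}), use properness of the pieces, stabilize by auxiliary sections) is in the right family, but the two ideas that actually make the argument close --- the three-way case split and the singularity-type/order-relation modification theory imported from \cite[Section~5]{YZ} --- are absent from your proposal.
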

We will use valuative criteria for properness. Let $(R,\bullet)$ be a complete discrete valuation $\C$-algebra with $K$ as its fraction field and residue field $\C$. We will prove that given any 
	\[
	\xi^*= ((\phi^*:C^*\rightarrow C_0^*,f^*,x^*),(C_0^*,x^*,[u^*],e^*,N^*,v^*_1,v^*_2))\in MQ^{\epsilon_0}_{g,n}([E^{ss}/G],\beta)(K),
	\]
	it has a unique (up to finite base change) extension to
	\[
	\xi=	((\phi:C\rightarrow C_0,f,x),(C_0,x,[u],e,N,v_1,v_2))\in MQ^{\epsilon_0}_{g,n}([E^{ss}/G],\beta)(R).
	\]
\subsubsection{Case 1:} Assume $(g,n,d)\neq (0,1,d_0)$ and $\xi^*$ does not have degree-$(0,d_0)$ rational tails. Then 
	\[
	\eta^*= ((\phi^*:C^*\rightarrow C^*_0,f^*,x^*),(C^*_0,x^*,[u^*]))
	\]
	is a $K$-family of $\epsilon_{-}$-stable quasimaps to $[E^{ss}/G]$. As ${Q}^{\epsilon_{-}}_{g,n}([E^{ss}/G],\beta)$ is proper, it uniquely extends to an $R$-family of $\epsilon_{-}$-stable quasimaps
	\[
	\eta_-=((\phi_-:C_-\rightarrow C_{0-},f_-,x_-),((C_{0-},x_-,[u_-])).
	\]
	As  the representable morphism $M\tilde{\mathfrak{M}}_{g,n,d}\rightarrow \tilde{\mathfrak{M}}_{g,n,d}\rightarrow \mathfrak{M}^{\text{wt,ss}}_{g,n,d}$ is proper, we have an $\epsilon_0$-semistable extension of $\xi^*$
	\[
	\xi_-= ((\phi_-:C_-\rightarrow C_{0-},f_-,x_-),(C_{0-},x_-,[u_-],e_-,N_-,v_{1-},v_{2-})).
	\]
We need to show that it is $\epsilon_0$-stable. 
	
First we start with a definition. Consider a totally ramified finite base change $\text{Spec}R'\rightarrow \text{Spec}R$ of degree $r$. Let $K'$ be the fraction field of $R'$.
	
\begin{definition}
A modification of $\eta_-$ of degree $r$ is the family of $\epsilon_0$-semistable quasimaps over $R'$
		\[
		\tilde{\eta}=((\tilde{\phi}:\tilde{C}\rightarrow\tilde{C}_0,\tilde{f},\tilde{x}),(\tilde{C}_0,\tilde{x},[\tilde{u}])),
		\]
together with an isomorphism 
		\[
		\tilde{\eta}\vert_{\text{Spec}K'}\cong\eta_-\vert_{\text{Spec}K'}.
		\]
\end{definition}
	
Let $\eta'=((\phi':C'\rightarrow C'_0,f',x'),(C'_0,x',[u']))$ be the pullback of $\eta_-$ to $\text{Spec}R'$.
	
\begin{lemma}
		The family of curves $C'_0$ is obtained from $\tilde{C}_0$ by contracting the rational tails of degree $(0,d_0)$ on the special fiber.
\end{lemma}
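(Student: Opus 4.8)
The plan is to realize $C_0'$ as an explicit contraction of $\tilde C_0$ and then pin it down by uniqueness of $\epsilon_-$-stable limits. Since the statement only concerns the domain curve $C_0$ of the universal quasimap — which, under the fiber-product presentations (\ref{cd9}) and (\ref{eq10}), depends only on the quasimap data $((C_0,x),[u])$ and the contraction $\phi\colon C\to C_0$ — I would work with that data throughout. The argument runs parallel to \cite[Section 4.3]{YZ}, the only new ingredient being the escape clause of Definition \ref{stabcon} for $f$-non-constant components.

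First I would construct the contraction. Because we are in Case 1, over $\operatorname{Spec}K'$ the curve $\tilde C_0$ has no degree-$(0,d_0)$ rational tail, so every such tail of $\tilde C_0$ lies over the closed point $\bullet$; moreover each has $Y$-degree $0$, hence $f$ is constant on it and it is not contracted by $\tilde\phi$. Contracting these tails (and their preimages in $\tilde C$), recording at each attaching node a base point of length $d_0$ — the standard quasimap contraction of \cite[Section 3.2.2]{CKwallcross}, compare Section \ref{sec2.9} — and letting $\bar\phi$, $\bar f$, $[\bar u]$ be the induced contraction, map and quasimap, one obtains a flat $R'$-family
\[
\bar\eta=\bigl((\bar\phi\colon\bar C\to\bar C_0,\bar f,x),(\bar C_0,x,[\bar u])\bigr)
\]
of prestable quasimaps to $[E^{ss}/G]$, with $\bar\eta|_{K'}\cong\tilde\eta|_{K'}\cong\eta_-|_{K'}$ since only the special fibers are modified.

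Next I would check that $\bar\eta$ is $\epsilon_-$-stable. Over $K'$ this is inherited from $\eta_-|_{K'}$. Over $\bullet$, fix a component $C'\subset\bar C_0$: if $f$ is non-constant on $C'$, then $f|_{C'}$ is automatically a stable map to $Y$ (a non-constant map has finite automorphism group), so the first alternative of Definition \ref{stabcon} holds; if $f$ is constant on $C'$, then $C'$ has $Y$-degree $0$, and the $\epsilon_0$-semistability constraints on $\tilde C_0$ (no rational tail of degree $<(0,d_0)$, so the rational tails surviving the contraction have degree $\ge(0,d_0+1)$; no rational bridge of degree $(0,0)$; no base point of length $>d_0$ on a rational component), together with the numerology $d_0<1/\epsilon_-<d_0+1$, force $((\bar C_0,x),[\bar u])|_{C'}$ to be $\epsilon_-$-stable in the sense of Definition \ref{def5}. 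Hence $\bar\eta$ is an $\epsilon_-$-stable extension of $\eta_-|_{K'}$ over $R'$. Since $Q^{\epsilon_-}_{g,n}([E^{ss}/G],\beta)$ is separated (Proposition \ref{Prop1}), such an extension is unique up to isomorphism; as $\eta'=\eta_-|_{R'}$ is one, we get $\bar\eta\cong\eta'$, and in particular $\bar C_0\cong C_0'$ compatibly with the contraction $\tilde C_0\to\bar C_0$, which is the asserted description.

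The hard part will be the special-fiber bookkeeping in the stability check: confirming that once the degree-$(0,d_0)$ rational tails are removed, every remaining component is covered either by the escape clause (precisely the $f$-non-constant components) or by the $\epsilon_0$-semistability numerology, with no other $\epsilon_-$-unstable feature left behind — a too-small rational tail, a degree-$(0,0)$ rational bridge, or a base point of length exceeding $1/\epsilon_-$ (here one also uses that $\tilde\eta$, being a modification of the $\epsilon_-$-stable $\eta_-$, carries no over-length base point on an $f$-constant component) — and, separately, verifying that the contracted $R'$-family is genuinely flat, with the newly created base points of length exactly $d_0$ and disjoint from the nodes and markings.
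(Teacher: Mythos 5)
Your proposal is correct and follows essentially the same route as the paper: the paper's proof is a one-line citation of separatedness of $Q^{\epsilon_-}_{g,n}([E^{ss}/G],\beta)$ together with \cite[Lemma 5.2.2]{YZ}, and you reconstruct exactly that argument — contract the degree-$(0,d_0)$ tails on the special fiber of $\tilde C_0$, verify (using the $\epsilon_0$-semistability numerology and the $f$-nonconstant escape clause of Definition~\ref{stabcon}) that the resulting family is $\epsilon_-$-stable, and then invoke separatedness to identify it with $\eta'$. The bookkeeping you flag at the end (flatness of the contraction, over-length base points on $f$-constant components) is precisely the content of the cited YZ lemma adapted here, so there is no gap in the approach itself.
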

	
\begin{proof}
		Using separatedness of $Q^{\epsilon_{-}}_{g,n}([E^{ss}/G],\beta)$, this follows by a similar argument given in \cite[Lemma 5.2.2]{YZ} . 
\end{proof}
	
	As the extension $R\subset R'$ is totally ramified, $C'_0(\bullet)\cong C_{0-}(\bullet)$. Let $p_1,\cdots, p_k$ be the length $d_0$ relevant base points in the special fiber. For the modifications to be $\epsilon_0$-stable, rational tails of degree $(0,d_0)$ should contract to length $d_0$ relevant base points on $C'_0(\bullet)$ (Condition (2) of Definition \ref{scal}). Hence, over each $p_i$ we have a contracted tail $E_i$ of degree $(0,d_0)$. Let each $E_i$ intersect the other component of $\tilde{C}_0(\bullet)$ at some $A_{a_1-1}$-singularity. Such a modification $\tilde{\eta}$ is said to have singularity type $(a_1/r,\cdots, a_k/r)$.
	
	Singularity types of all such modifications are bounded from above by $(b_1,\cdots,b_k)\in \Q_{>0}\cup\{\infty\}$ which we describe briefly. Let $B^*\subset C_{0-}(K)$ be the locus of length $d_0$ relevant base points and let $B\subset C_{0-}$ be its closure. Let $p_{l+1},\cdots, p_k$ be the length $d_0$ relevant base points in the special fiber contained in $B$. Assign $b_i=\infty$ for $i={l+1},\cdots,k$. Replace $B$ by additional orbifold markings \cite[Lemma 5.2.4]{YZ} so that the generic fiber of $\eta^*$ is an $\epsilon_{+}$-stable quasimap. Its $\epsilon_{+}$-extension to $R$ will have $l$ degree-$(0,d_0)$ rational tails $E_1,\cdots, E_l$ corresponding to $p_1,\cdots, p_l$. We define $(b_1,\cdots, b_l)$ to be the singularity type of this $\epsilon_{+}$-extension. 
	
	To see the bound, we record \cite[Lemma 5.2.5]{YZ}. Even though the lemma was proven for quasimaps to $[V^{ss}/G]$ with affine $V$, the same proof works for quasimaps to $[E^{ss}/G]$.
	
\begin{lemma}[c.f. \cite{YZ}, Lemma 5.2.5]\label{unimod}
For $a:=(a_1,\cdots, a_k)\in \Q_{>0}^k$ and sufficiently divisible $r$, the following are  equivalent:
		\begin{enumerate}
			\item $\eta_-$ has a modification $\tilde{\eta}$ of degree $r$ and singularity type $a$;
			\item $a_i\leq b_i$ for each $i$. 
		\end{enumerate}
		Moreover, assuming $\tilde{\eta}$ exists, then
		\begin{itemize}
			\item $\tilde{\eta}$ is uniquely determined by $a$ and $r$.
			\item for each $i$, $E_i$ contains no length-$d_0$ relevant base point if and only if $a_i=b_i$, where $E_i$ is a rational tail of $\tilde{\eta}$ lying over $p_i$.
		\end{itemize}
\end{lemma}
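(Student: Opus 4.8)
The plan is to follow the proof of \cite[Lemma 5.2.5]{YZ} essentially verbatim, checking that the passage from the affine GIT target $X^{ss}=[\tilde{V}^{ss}/(S\times G)]$ to the GIT stack bundle $[E^{ss}/G]$ introduces nothing new. The organizing observation is that a modification of $\eta_-$ only alters the family in an \'etale neighborhood of the length-$d_0$ relevant base points $p_1,\dots,p_k$ on the special fiber, and near each such $p_i$ the map $f$ to $Y$ is constant: indeed the $p_i$ lie on $[u_-]$-nonconstant rational components of $C_{0-}$, whose preimages under $\phi_-$ are $f_-$-constant. Consequently the entire analysis takes place in the $X^{ss}$-factor, and the compatibility $Q_f\cong Q_u$ is preserved automatically since $f$ stays constant there.

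\textbf{Direction (1)$\Rightarrow$(2) and uniqueness.} First I would take a modification $\tilde{\eta}$ over $R'$ of degree $r$ and singularity type $a$, and analyze the new rational tails. By $\epsilon_0$-semistability each tail $E_i$ over $p_i$ has degree at least $(0,d_0)$; since the total class is $\beta$ and each $p_i$ carries a base point of length exactly $d_0$ in $\eta_-$, a degree count forces $E_i$ to have class exactly $(0,d_0)$ and to be the unique new component over $p_i$. Contracting the $E_i$ recovers the pullback of $\eta_-$ to $\Spec R'$, as in the Lemma above, so the local picture of $\tilde{\eta}$ near $p_i$ is the standard weighted blow-up of the base change of $\Spec R[[t]]$ producing an $A_{a_i-1}$-singularity. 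Comparing with the ``maximal'' such modification --- the $\epsilon_+$-extension of the auxiliary quasimap obtained by replacing $B$ by orbifold markings, which has singularity type $b_i$, with $b_i=\infty$ exactly when the base point persists into the closure $B$ --- yields $a_i\le b_i$. Uniqueness then follows because $\eta_-$ itself is unique over $R$ (by properness and separatedness of $Q^{\epsilon_-}_{g,n}([E^{ss}/G],\beta)$), and the local construction recovering $\tilde{\eta}$ from $\eta_-$ involves no choices once $(a,r)$ is fixed.

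\textbf{Direction (2)$\Rightarrow$(1).} Conversely, given $a$ with $a_i\le b_i$ and $r$ sufficiently divisible, I would take the totally ramified base change $\Spec R'\to\Spec R$ and, at each $p_i$, perform the weighted blow-up of weight $a_i/r$, extracting a rational tail $E_i$ meeting the rest of the special fiber in an $A_{a_i-1}$-singularity. On $E_i$ one extends $f$ by the constant value $f_-(p_i)$ and extends $[u]$ using the local normal form of the length-$d_0$ base point, arranging $E_i$ to carry class $(0,d_0)$. One then verifies $\epsilon_0$-semistability of the result: no degree-$(0,0)$ rational bridge is created, the new tails have class exactly $(0,d_0)$, and base points on rational components still have length $\le d_0$. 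For the last bullet: when $a_i=b_i$ the tail $E_i$ agrees with the tail of the $\epsilon_+$-extension, which is $\epsilon_+$-stable and hence has all base points of length $\le d_0-1<d_0$, so no length-$d_0$ relevant base point; when $a_i<b_i$ a base point of length $d_0$ survives on $E_i$.

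\textbf{Main obstacle.} The substantive content is the local analysis behind both directions: constructing the weighted blow-up of a family of twisted curves with a prescribed $A_{a_i-1}$-singularity and extending the quasimap data across the newly created rational tail while pinning its class to $(0,d_0)$. This is precisely \cite[Lemma 5.2.5]{YZ}; the only genuinely new verification is that in the bundle target $[E^{ss}/G]$ none of this disturbs the pair $(f,\,Q_f\cong Q_u)$, which holds because $f$ is constant near every $p_i$, so the whole argument descends to the $X^{ss}$-factor.
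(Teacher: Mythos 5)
Your proposal is correct and takes essentially the same route as the paper, whose entire proof consists of recording \cite[Lemma 5.2.5]{YZ} and asserting that the same argument works for $[E^{ss}/G]$; your elaboration—that a modification only sprouts degree-$(0,d_0)$ tails over the $p_i$, on which the $Y$-degree is zero so $f$ extends constantly and the classification of modifications reduces to the $X^{ss}$-factor of the quasimap data—is precisely the justification the paper leaves implicit. The one imprecision is your opening claim that the components of $C_{0-}$ containing the $p_i$ have $f_-$-constant preimages (this is not automatic for rational bridges and is not needed); what your argument actually uses, correctly, is that the newly created tails carry no $Y$-degree, so $f$ is extended by the constant value $f_-(p_i)$ and the compatibility $Q_f\cong Q_u$ is untouched.
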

	
Recall from the definition of $\epsilon_0$-semistable calibrated bundle (Definition \ref{sscabun}), $v_1$ and $v_2$ have no common zeros. Denote their vanishing order at the closed point by  $\text{ord}(v_1)$ and $\text{ord}(v_2)$. When $v_{1-}(\bullet)\neq0$, the stability condition (see Definition \ref{scal}) is trivially true (as $\eta_-$ is $\epsilon_{-}$-stable quasimap, it does not have degree $d_0$ rational tails), and we can take $\xi:=\xi_-$. To show that this is the unique stable extension, consider a degree $r$ modification $\tilde{\eta}$ with singularity type $(a_1,\cdots,a_k)$. If $a_i>0$ for some $i$, it means that there is a degree-$(0,d_0)$-rational tail in the special fiber. Moreover, applying \cite[Lemma 2.10.1]{YZ} to extensions $C_-$ and $\tilde{C}$, we have the following relation,
\begin{equation}\label{ordrel}
\text{ord}(\tilde{v}_1)-\text{ord}(\tilde{v}_2)=r(\text{ord}(v_{1-})-\text{ord}(v_{2-}))-r\sum_{i=1}^{k}a_i.
\end{equation}
As $\text{ord}(v_{1-})=0$, we have $0\leq \text{ord}(\tilde{v}_1)<\text{ord}(\tilde{v}_2)$. Hence $\tilde{v}_2(\bullet)=0$ but also contains degree $(0,d_0)$-rational tail, contradicting stability. Similarly, if there are no length-$d_0$ base points in the special fiber, $\xi_-$ is $\epsilon_0$-stable and we are done. 
	
So let us assume $v_{1-}(\bullet)=0$ and let $p_1,\cdots,p_k\in C_{0-}({\bullet})$ be length-$d_0$ relevant base points in the special fiber. Let $\tilde{\eta}$ be a non-trivial modification of $\eta_-$ with singularity type $(a_1,\cdots,a_k)$. Put $\delta:=\text{ord}(v_{1-})-\text{ord}(v_{2-})$ and $|a|:=\sum_{i=1}^{k}(a_i)$.  By Lemma \ref{unimod}, such a modification is stable if and only if 
	\begin{equation}
		\begin{cases}
			& |a|\leq\delta;\\
			& 0<a_i\leq b_i \text{, for all } i=1,\cdots,k;\\
			& \text{if } |a|<\delta \text{ then } a_i=b_i \text{ for all } i=1,\cdots,k;\\
			&\text{if } a_i<b_i \text{ then } a_i \text{ is maximal among } a_i,\cdots, a_k, \text{ for all } i=1,\cdots, k.
		\end{cases}       
	\end{equation}
There exists a unique solution of $(a_1,\cdots, a_k)$ up to a totally ramified base change. By Lemma \ref{unimod}, $(a_1,\cdots, a_k)$ and $r$ completely determine the modification $\tilde{\eta}$ which in turn gives a unique $\epsilon_0$-stable extension $\tilde{\xi}$. We take $\xi=\tilde{\xi}$ as the required unique (up to base change) extension.

\subsubsection{Case 2:} We assume $(g,n,d)=(0,1,d_0)$. Let $\beta= (0,\beta_0)$, so that $d_0=\beta_0(L_{\theta})$. Recall that the curve $C_0$  cannot have rational tails with degrees less than $d_0$. Hence, $C_0$ is an irreducible rational tail with one special point. The map $\phi:C\rightarrow C_0$ is an isomorphism and $f:C\rightarrow Y$ is a constant map.

In this case,  $Q^{\sim}_{0,1}([E^{ss}/G],\beta)\cong Y\times_{[W^{ss}/S]}Q^{\sim}_{0,1}(X^{ss},(0,\beta_0))$. This implies the following isomorphism for $\epsilon_0$-semistable quasimaps with calibrated tails, 
\[
MQ^{\sim}_{0,1}([E^{ss}/G],\beta)\cong Y\times_{[W^{ss}/S]}MQ^{\sim}_{0,1}(X^{ss},(0,\beta_0)).
\]
Now properness follows from that of $MQ^{\epsilon_0}_{0,1}(X^{ss},(0,\beta_0))$ \cite[Section 5.3]{YZ}.

\subsubsection{Case 3:} Assume that the generic fiber contains degree-$(0,d_0)$ rational tails and let $\mathcal{E}_1^*,\cdots, \mathcal{E}_l^*\subset C_0^*$ be the entangled rational tails. Let $C^*_{0\sim}$ be the union of other components along with non-entangled rational tails. Viewing nodes on $C^*_{0\sim}$ as new markings $y^*_{\sim}$, we obtain a family of quasimaps over $\text{Spec}K$
\[
\eta^*_\sim=((\phi^*_{\sim}:C^*_{\sim}\rightarrow C^*_{0\sim},f^*_\sim=f^*\vert_{C^*_{0\sim}},x^*_\sim,y^*_\sim),(C^*_{0\sim},[u^*_{\sim}]=[u^*]\vert_{C_{0\sim}^{*}},x^*_\sim,y^*_\sim)).
\]
Similarly, consider the node on entangled tails as marking $z^*_i$ to obtain quasimaps of type $(g,n,\beta)=(0,1,(0,d_0))$ for $i=1,\cdots,l$,
\[
\eta^*_i=((\phi^*_i:\mathcal{E}_i^*\cong \mathcal{E}^*_i,f^*_i, z^*_i),(\mathcal{E}^*_{i},[u^*_i],z^*_i)).
\]
$\eta^*_\sim$ does not contain any entangled tails. Hence by stability condition (Definition \ref{scal} conditions (1)-(2)), there are no constant tails and no length $d_0$ base points. Hence $\eta^*_\sim$ is $\epsilon_{+}$-stable. In general, any stable extension can be obtained by gluing $\epsilon_{+}$-stable $\eta_\sim$ and $\epsilon_0$-semistable $\eta_i$ of type $(g,n,\beta)=(0,1,(0,d_0))$ for $i=1,\cdots,l$ at markings $z_i,y_i$.

Let $\eta_\sim$ and $\eta_i$ be $\epsilon_0$-stable extensions (if they exist) of $\eta_\sim^*$ and $\eta^*_i$ respectively. By properness of $Q^{\epsilon_+}_{g,n+l}([E^{ss}/G],\beta-ld_0)$, we can take $\eta_\sim$ to be the $\epsilon_+$-extension of $\eta_\sim^*$. However $\eta^*_i$ is only $\epsilon_0$-semistable and does not have a unique extension. Still, there is a unique choice of $\eta_1,\cdots,\eta_l$ such that $\xi$ is stable. To see this, note that $f^*_i$ is a constant morphism. Hence we can consider each $\eta_i^*$ to be an element of $Q^{\text{ss}}_{0,1}(X^{ss},(0,d_0))$. Therefore, uniqueness follows by an argument similar to the one in \cite[Section 5.4]{YZ}.


\section{Localization on Master Space}\label{sec:localization}
Consider the $\C^*$-action on $MQ^{\epsilon_0}_{g,n}([E^{ss}/G],\beta)$ given by scaling the section $v_1$: for $\lambda\in \C^*$,
\begin{equation*}
	\begin{split}
		&\lambda\cdot((\phi:C\rightarrow C_0,f,x),(C_0, x, [u], e,N,v_1,v_2))\\  
        := &((\phi:C\rightarrow C_0,f,x),(C_0,x,[u],e,N,\lambda v_1,v_2)).
\end{split}
\end{equation*}

\subsection{Fixed components}
We describe $\C^*$-fixed components of $MQ^{\epsilon_0}_{g,n}([E^{ss}/G],\beta)$.
\subsubsection{$v_1=0$}\label{fixcomp1}
Let $$F^+\subset MQ^{\epsilon_0}_{g,n}([E^{ss}/G],\beta)$$ denote the fixed component of $MQ^{\epsilon_0}_{g,n}([E^{ss}/G],\beta)$ defined by $v_1=0$. By stability condition (condition (3) of Definition \ref{scal}) and forgetting the trivial data $N,v_1,v_2$, we have 
\begin{align}
	F^+\cong \tilde{Q}^{\epsilon_+}_{g,n}([E^{ss}/G],\beta), \text{ under which } [F^+]^{\text{vir}}= [\tilde{Q}^{\epsilon_+}_{g,n}([E^{ss}/G],\beta)]^{\text{vir}}.
\end{align}
The identification of virtual fundamental classes follows from the fact that $\{v_1=0\}\subset M\tilde{\mathfrak{M}}_{g,n,d}$ is isomorphic to $\tilde{\mathfrak{M}}_{g,n,d}$. This also implies that the calibration bundle is the virtual normal bundle with a $\C^*$-action of weight $1$.

\subsubsection{$v_2=0$}\label{fixcomp2}
Similarly when $v_2=0$, we have a fixed component denoted by 
$$F^-\subset MQ^{\epsilon_0}_{g,n}([E^{ss}/G],\beta).$$ 
By the stability condition (condition (4) of Definition \ref{scal}), the underlying quasimap is $\epsilon_{-}$-stable and hence does not have degree-$(0,d_0)$ rational tails. Hence, in this case we have no entangled tails and $\{v_2=0\}\subset M\tilde{\mathfrak{M}}_{g,n,d}$ is isomorphic to $\mathfrak{M}^{\text{wt,ss}}_{g,n,d}$. Therefore, we have 
\begin{align}
	F^-\cong {Q}^{\epsilon_-}_{g,n}([E^{ss}/G],\beta), \text{ under which } [F^-]^{\text{vir}}= [{Q}^{\epsilon_-}_{g,n}([E^{ss}/G],\beta)]^{\text{vir}}.
\end{align}
Note that for $g=0,n=1$ and deg$(\beta)=(0,d_0)$, $Q^{\epsilon_-}_{g,n}([E^{ss}/G],\beta)$ is empty and $v_2\neq 0$. When it is nonempty, the virtual normal bundle is the dual of the calibration bundle with $\C^*$-action of weight $(-1)$, since we look at 
$\{v_2=0\}\subset M\tilde{\mathfrak{M}}_{g,n,d}$.

\subsubsection{$v_1, v_2\neq 0$, Case I}
 When $g=0,n=1,\text{deg}(\beta)=(0,d_0)$, the curve is irreducible. The $\C^*$-fixed component $F_{\beta}$ is given by
\[
F_{\beta}=\{\xi\vert\text{ the domain curve is a single fixed tail, }v_1\neq0,v_2\neq0\}.
\]
Let $C$ be the domain curve with the unique marked point $x_{\star}$. As $\text{deg}(\beta)=(0,d_0)$, $f:C\rightarrow Y$ is a constant map (which means that the contraction map is an isomorphism), and $[u]:C\rightarrow X$ is a genus $0$, $1$-pointed quasimap to $X^{ss}$ with degree $\beta_0$ such that deg$(\beta_0)=(0,d_0)$. Let $\mathfrak{r}_{\star}$ be the restriction of $\mathfrak{r}$ at the unique point $x_{\star}$. 

Fix a nonzero tangent vector $v_{\infty}$ at $\infty\in \P^1$. There is a unique isomorphism $C\rightarrow \P^1$ mapping the marked point to $\infty$, the base point to $0$, and sending $(v_2/v_1)^{\otimes\mathfrak{r}_{\star}}$ to $v_{\infty}$. This together with the morphism $[u]$ determine a point in quasimap graph space $QG_{0,1}(X^{ss},\beta_0)$ (whose definition is recalled in Section \ref{IFunc}). As $C$ is a fixed tail, this maps into the fixed component $F_{\star,\beta_0}$. Including constant map $f$ to $Y$ and taking the data over an arbitrary scheme, we have a morphism 
\[
F_{\beta}\rightarrow Y\times_{[W/S]} F_{\star, \beta_0}.
\]
Denote $F_{\star,\beta}:=Y\times_{[W/S]} F_{\star, \beta_0}$.

\begin{lemma}\label{lem32}
	The morphism $F_{\beta}\rightarrow F_{\star,\beta}$ is \'etale of degree $\mathfrak{r}_{\star}$.
\end{lemma}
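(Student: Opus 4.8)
The plan is to exhibit $F_{\beta}$, as a stack over $F_{\star,\beta}$, as the $\mu_{\mathfrak{r}_{\star}}$-torsor of $\mathfrak{r}_{\star}$-th roots of a canonical trivialization of a line bundle; since $\mathfrak{r}_{\star}$ is invertible, such a torsor is finite \'etale of degree $\mathfrak{r}_{\star}$. As $(g,n,d)=(0,1,d_0)$, the universal calibration bundle is by definition the cotangent line at the unique marking $x_{\star}$; write $\mathbb{T}$ for the associated (relative) tangent line. Over $F_{\beta}$ both $v_1$ and $v_2$ are nowhere zero, so $v_2/v_1$ is a nowhere-vanishing section of $\mathbb{T}$; and since a point of $M\tilde{\mathfrak{M}}_{0,1,d_0}$ with $v_1,v_2$ both nonzero is determined by the ratio $v_2/v_1\in\mathbb{T}$, giving $(N,v_1,v_2)$ over $F_{\beta}$ is the same as giving a nowhere-vanishing section of $\mathbb{T}$.

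First I would record the geometric input: for a twisted curve with a $\mu_{\mathfrak{r}_{\star}}$-gerbe marking $x_{\star}$ the band acts on $\mathbb{T}$ through the tautological character, and there is a canonical isomorphism of $\mathbb{T}^{\otimes\mathfrak{r}_{\star}}$ with the pullback of the tangent line of the coarse curve at $\underline{x}_{\star}$, as in the root-stack description of twisted markings (cf. \cite[Section 4]{AGV}). On $F_{\star,\beta}=Y\times_{[W/S]}F_{\star,\beta_0}$ the coarse curve is canonically $\P^1$ with $\underline{x}_{\star}=\infty$, and the fixed vector $v_{\infty}$ trivializes $T_{\infty}\P^1$, so $\mathbb{T}^{\otimes\mathfrak{r}_{\star}}$ carries a canonical trivialization over $F_{\star,\beta}$. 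Let $P\to F_{\star,\beta}$ be the resulting $\mu_{\mathfrak{r}_{\star}}$-torsor of $\mathfrak{r}_{\star}$-th roots of $1$ inside the total space of $\mathbb{T}$.

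Next I would check that $F_{\beta}\cong P$ over $F_{\star,\beta}$ by producing mutually inverse morphisms. The morphism $F_{\beta}\to P$ is the construction preceding the lemma: the normalization $(v_2/v_1)^{\otimes\mathfrak{r}_{\star}}\mapsto v_{\infty}$ says exactly that $v_2/v_1$ is an $\mathfrak{r}_{\star}$-th root of the canonical trivialization, hence a section of $P$. For the inverse, given an $R$-point of $F_{\star,\beta}$ --- a twisted curve $C$ with $\underline{C}\cong\P^1$, a relevant base point at $0$, a quasimap $[u]$, and the constant map $f$ to $Y$ --- together with a root $w\in\mathbb{T}$, I would set $N:=\O_R$, $v_2:=1$, $v_1:=w^{-1}$. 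One checks this defines an $\epsilon_0$-stable quasimap with calibrated tails whose domain is a single fixed tail and which lies in $F_{\beta}$: conditions (3) and (4) of Definition \ref{scal} are vacuous since $v_1,v_2\neq0$, while conditions (1) and (2) hold because the tail being fixed is precisely what places the underlying quasimap in $F_{\star,\beta_0}$ --- there is one degree-$(0,d_0)$ tail, it carries the unique relevant base point, so it is a constant and hence entangled tail. The two morphisms are visibly inverse, so $F_{\beta}\cong P$ and the lemma follows.

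The step I expect to require the most care is the stacky bookkeeping underlying the first two paragraphs: making precise, in families, that $(N,v_1,v_2)$ on $F_{\beta}$ is the same datum as a trivialization of $\mathbb{T}$, and that the $\mathfrak{r}_{\star}$-th-root description of $P$ is correct, while keeping track of the $\mu_{\mathfrak{r}_{\star}}$-gerbe structure at $x_{\star}$ --- the band acting on $\mathbb{T}$ by the tautological character is exactly what makes $\mathbb{T}^{\otimes\mathfrak{r}_{\star}}$ descend to the coarse curve and is the source of the degree being exactly $\mathfrak{r}_{\star}$. A shortcut that avoids redoing this: here $f$ is forced to be constant, so exactly as in the $(0,1,d_0)$ case treated in Section~\ref{subsec:proper_master_space} one has $MQ^{\epsilon_0}_{0,1}([E^{ss}/G],\beta)\cong Y\times_{[W/S]}MQ^{\epsilon_0}_{0,1}(X^{ss},\beta_0)$, hence $F_{\beta}\cong Y\times_{[W/S]}F_{\beta}^{X}$ with $F_{\beta}^{X}\subset MQ^{\epsilon_0}_{0,1}(X^{ss},\beta_0)$ the analogous fixed component for $X^{ss}$; the morphism $F_{\beta}\to F_{\star,\beta}$ is then the base change along $Y\to[W/S]$ of $F_{\beta}^{X}\to F_{\star,\beta_0}$, which is \'etale of degree $\mathfrak{r}_{\star}$ by \cite{YZ}, and being \'etale and having a given degree are stable under base change.
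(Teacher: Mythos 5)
Your main argument matches the paper's proof: the paper likewise characterizes a lift of an $R$-point of $F_{\star,\beta}$ to $F_{\beta}$ as the choice of an $\mathfrak{r}_{\star}$-th root of $v_{\infty}$ (noting the $Y$-point already determines the constant map $f$), which is precisely your $\mu_{\mathfrak{r}_{\star}}$-torsor description after identifying $(N,v_1,v_2)$ with the nowhere-vanishing section $v_2/v_1\in\Gamma(\mathbb{T})$. Your closing ``shortcut'' via base change of $F^{X}_{\beta}\to F_{\star,\beta_0}$ along $Y\to[W/S]$ is also sound and is in fact the mechanism the paper uses for the subsequent Lemma \ref{lem33}.
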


\begin{proof}
	Lifting a morphism $S\rightarrow Y\times_{[W/S]} F_{\star, \beta_0}$ to $S\rightarrow F_{\beta}$ is the same as choosing an $\mathfrak{r}_{\star}$-th root of $v_{\infty}$. Note that the point in $Y$ completely determines the constant map $f$.
\end{proof}

\begin{lemma}\label{lem33}
The pullback of $[F_{\star,\beta}]^{\text{vir}}$ along the above morphism is equal to $[F_{\beta}]^{\text{vir}}$ and 
	\[
	\dfrac{1}{e_{\C^*}(N^{\text{vir}}_{F_{\beta}/MQ^{\epsilon_0}_{0,1}([E^{ss}/G],\beta)})}=(\mathfrak{r}_{\star}z)\cdot\mathbb{I}_{\beta_0}(\mathfrak{r}_{\star}z),
	\]
where 
	\[
	\mathbb{I}_{\tilde{\beta}_0}(z):= \dfrac{1}{e_{\C^*}(N^{\text{vir}}_{F_{\star,\beta_0}/QG_{0,1}(X^{ss},\beta_0)})}.
	\]
\end{lemma}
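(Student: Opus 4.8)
The plan is to deduce both assertions from a comparison, along the \'etale degree-$\mathfrak{r}_\star$ morphism $p: F_\beta\to F_{\star,\beta}$ of Lemma \ref{lem32}, between the $\C^*$-equivariant (absolute) obstruction theory of $MQ^{\epsilon_0}_{0,1}([E^{ss}/G],\beta)$ near $F_\beta$ and the obstruction theory of the quasimap graph space $QG_{0,1}(X^{ss},\beta_0)$ near its fixed component $F_{\star,\beta_0}$.

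The first step is to reduce to the target $X^{ss}$. Since $\deg(\beta)=(0,d_0)$ forces $f$ to be constant and $\phi$ an isomorphism, the same argument as in the proof of properness of the master space (Case~2 in Section~\ref{subsec:proper_master_space}) yields
\[
MQ^{\epsilon_0}_{0,1}([E^{ss}/G],\beta)\cong Y\times_{[W^{ss}/S]}MQ^{\epsilon_0}_{0,1}(X^{ss},(0,\beta_0)),
\]
while $F_{\star,\beta}=Y\times_{[W/S]}F_{\star,\beta_0}$ by definition. Under these identifications the relative obstruction theory $\mathbb{E}_\mu=\mathbb{E}_{\mu_1}\oplus\mathbb{E}_{\mu_2}$ splits so that the summand $\mathbb{E}_{\mu_2}$ governing the constant map to $Y$ carries the trivial $\C^*$-action (for a constant map from a genus-$0$ twisted curve one has $R\pi_*(f^*T_Y)=T_Y$ concentrated in degree $0$): it contributes to the fixed-part virtual class but not to the virtual normal bundle, and it is matched exactly by the $Y$-factor of $[F_{\star,\beta}]^{\text{vir}}$. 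Hence it is enough to prove both statements for $MQ^{\epsilon_0}_{0,1}(X^{ss},(0,\beta_0))$ against $QG_{0,1}(X^{ss},\beta_0)$.

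For the remaining $X^{ss}$ statement I would follow the analysis of the corresponding lemma in \cite{YZ}. A point of $F_\beta$ is a fixed tail $[u]:C\cong\P^1\to X^{ss}$ together with calibration data $(N,v_1,v_2)$, $v_1,v_2\neq 0$; the rigidification sending $(v_2/v_1)^{\otimes\mathfrak{r}_\star}$ to the chosen tangent vector $v_\infty$ at $\infty$ produces the corresponding point of $F_{\star,\beta_0}$ and pins down the residual automorphism up to $\mathfrak{r}_\star$-th roots of unity, which accounts for the degree of $p$. Comparing virtual tangent complexes at corresponding points: the deformation-obstruction theory of the map to $X^{ss}$ is common to both moduli problems, and the remaining (``$\P^1$-type'') contributions differ in exactly two ways. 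First, since $\C^*$ scales $v_1$ it acts on $(v_2/v_1)^{\otimes\mathfrak{r}_\star}$, hence on the rigidification, through its $\mathfrak{r}_\star$-th power; this rescales the equivariant parameter by $z\mapsto\mathfrak{r}_\star z$. Second, $QG_{0,1}(X^{ss},\beta_0)$ carries one extra moving direction absent from $MQ^{\epsilon_0}_{0,1}(X^{ss},(0,\beta_0))$, arising from the difference between the calibrated-tail $\P(\mathbb{M}\oplus\O)$ and the target $\P^1$ of the graph space, and it is a one-dimensional $\C^*$-representation with equivariant Euler class $\mathfrak{r}_\star z$. On $\C^*$-fixed parts the two theories agree, so $[F_\beta]^{\text{vir}}=p^*[F_{\star,\beta}]^{\text{vir}}$ because $p$ is \'etale; on the moving parts, deleting the extra Euler-class-$\mathfrak{r}_\star z$ line and substituting $z\mapsto\mathfrak{r}_\star z$ gives
\[
e_{\C^*}\!\left(N^{\text{vir}}_{F_\beta/MQ^{\epsilon_0}_{0,1}([E^{ss}/G],\beta)}\right)=\frac{1}{\mathfrak{r}_\star z}\cdot e_{\C^*}\!\left(N^{\text{vir}}_{F_{\star,\beta_0}/QG_{0,1}(X^{ss},\beta_0)}\right)\Big|_{z\mapsto\mathfrak{r}_\star z},
\]
which after taking reciprocals and recalling the definition of $\mathbb{I}_{\beta_0}$ is the asserted identity.

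I expect the main obstacle to be the second point above: identifying exactly which single normal direction is present in $QG_{0,1}(X^{ss},\beta_0)$ but not in $MQ^{\epsilon_0}_{0,1}(X^{ss},(0,\beta_0))$, and checking that its $\C^*$-weight is the one stated. This is precisely the delicate part of the computation carried out in \cite{YZ}, which I would transport with only cosmetic changes, the single new feature being the inert $Y$-factor isolated in the reduction step.
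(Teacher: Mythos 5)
Your proposal follows essentially the same route as the paper: reduce to the target $X^{ss}$ via the fiber-product identification $MQ^{\epsilon_0}_{0,1}([E^{ss}/G],\beta)\cong Y\times_{[W/S]}MQ^{\epsilon_0}_{0,1}(X^{ss},\beta_0)$, observe that the $Y$-factor (equivalently $\mathbb{E}_{\mu_2}$) is $\C^*$-inert, and then invoke the comparison between the master-space fixed locus and the graph-space fixed locus from \cite[Lemma~6.4.2]{YZ}. The paper simply cites that lemma after the reduction, whereas you additionally sketch its mechanism (the $\mathfrak{r}_\star$-th-power rescaling of $z$ and the single extra moving line), but the decomposition and the key ingredient are the same.
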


\begin{proof}
	Define $F_{\beta_0}$ to be the image of the forgetful morphism $F_{\beta}\rightarrow F_{\beta_0}$ forgetting the data of constant map $f:C\rightarrow Y$. Then we have 
	\[
	F_{\beta}=Y\times_{[W/S]} F_{\beta_0}.
	\]
	Note that $F_{\beta_0}$ is the fixed component of the master space $MQ^{\epsilon_0}_{0,1}(X^{ss},\beta_0)$ with target $X^{ss}$. Then the first result follows from \cite[Lemma 6.4.2]{YZ}. 
	
	The second expression follows by $MQ^{\epsilon_0}_{0,1}([E^{ss}/G],\beta)\cong Y\times_{[W/S]}MQ^{\epsilon_0}_{0,1}(X^{ss},\beta_0)$ and \cite[Lemma 6.4.2]{YZ}.
\end{proof}

\subsubsection{$v_1, v_2\neq 0$, Case II}
When $2g-2+n+\epsilon_0d>0$, the following lemma gives the $\C^*$-fixed components.

\begin{lemma}
Let 
	\[
	\xi=((\phi:C\rightarrow C_0,f,x),(C_0,x, [u], e,N,v_1,v_2))\in MQ^{\epsilon_0}_{g,n}([E^{ss}/G],\beta)(\C)
	\]
be an $\epsilon_0$-stable quasimap to $[E^{ss}/G]$ with calibrated tails such that $v_1\neq0$ and $v_2\neq0$. Then $\xi$ is $\C^*$-fixed if and only if 
\begin{enumerate}
	\item there is at least one degree-$(0,d_0)$ rational tail, and
	\item each entangled tail is a fixed tail.
\end{enumerate}	
\end{lemma}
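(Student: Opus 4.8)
The plan is to reduce the statement to Lemma~\ref{infaut}, which already characterizes when the underlying family of quasimaps with entangled tails has an infinite automorphism group, supplemented by a local analysis of the calibration bundle along the entangled strata.

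First I would unwind what it means for $\xi$ (with $v_1\neq 0\neq v_2$) to be $\C^*$-fixed. Since the $\C^*$-action only rescales $v_1$, a $\C^*$-equivariant structure on $\xi$ amounts to a homomorphism $\rho\colon\C^*\to\Aut(\eta)$, where $\eta=((\phi\colon C\to C_0,f,x),(C_0,x,[u],e))$ is the underlying quasimap with entangled tails, such that $\rho(\lambda)$ carries $\lambda v_1$ back to $v_1$ and fixes $v_2$. In the calibrated-tails description $M\tilde{\mathfrak M}_{g,n,d}=\P_{\tilde{\mathfrak M}_{g,n,d}}(\mathbb M\oplus\O)$, this forces $\rho(\lambda)$ to act on the calibration line bundle $\mathbb M$ through the character $\lambda^{\mp 1}$; the condition on $v_2$ is automatic, since an automorphism of $\eta$ is the identity on the base and hence acts on $N$ by a scalar, which must be $1$ because $v_2\neq 0$. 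Therefore $\xi$ is $\C^*$-fixed if and only if $\Aut(\eta)$ contains a one-parameter subgroup acting on $\mathbb M$ through a non-trivial character; equivalently, $\Aut(\eta)$ is infinite and its identity component $\Gamma$ acts non-trivially on $\mathbb M$.

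Next I would invoke Lemma~\ref{infaut}: $\Aut(\eta)$ is infinite exactly when conditions (1) and (2) hold, and in that case $\Gamma\cong\C^*$ acts on every $T_{y_i}E_i$, for the entangled tails $E_1,\dots,E_k$, by one and the same non-zero weight $w$. It then remains to see that this forces $\Gamma$ to act non-trivially on $\mathbb M$. For this I would use the description of the universal calibration bundle recalled in Section~\ref{calbun} (following \cite{YZ}): after pulling back to $\tilde{\mathfrak M}_{g,n,d}$ and restricting to the locus where there are exactly $k$ entangled tails, $\mathbb M$ agrees, up to a line bundle on which $\Gamma$ acts trivially (the remaining, rigid, $f$- and $[u]$-nonconstant part of the curve carries no $\Gamma$-weight), with $\bigotimes_{i=1}^k (T_{y_i}E_i)^{\vee}$. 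Hence $\Gamma$ acts on $\mathbb M$ by a non-zero integer multiple of $w$, non-zero because $k\geq 1$ by condition (1) and $w\neq 0$. This gives the ``if'' direction. Conversely, if (1) or (2) fails, then $\Aut(\eta)$ is finite by Lemma~\ref{infaut}, so no nontrivial $\rho$ exists and $\xi$ is not $\C^*$-fixed.

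The step I expect to be the main obstacle is the weight bookkeeping for $\mathbb M$: one must carefully match the iterated blow-up construction of $\tilde{\mathfrak M}_{g,n,d}$ and its universal calibration bundle against the $\C^*$-fixed-locus geometry, so that the contributions of the $k$ entangled tails are seen to add rather than cancel and give a non-zero total weight; this is where the identification of $\mathbb M$ with $\O(-\mathfrak Z_1)$ and its behaviour under proper transforms is used. Once this is settled, everything else follows formally from Lemma~\ref{infaut}, the only mild additional remark being that on any degree-$(0,d_0)$ fixed tail the map $f$ to $Y$ is constant, so the analysis takes place entirely in the $X^{ss}$-factor and reduces to the corresponding computation of \cite{YZ}.
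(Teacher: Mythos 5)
Your proposal is correct and follows essentially the same route as the paper: reduce the $\C^*$-fixed condition to the surjectivity of the character by which $\Aut(\eta)$ acts on the calibration fiber (using $v_2\neq 0$ to rigidify $N$), then invoke Lemma~\ref{infaut}. The paper's own proof is terser — it cites "the construction of the calibration bundle" and Lemma~\ref{infaut} without spelling out the weight comparison between $\mathbb M$ and $\bigotimes_i \Theta_i$ — so your extra paragraph on that bookkeeping simply fills in detail left implicit in the paper.
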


\begin{proof}
As $v_1,v_2$ are nonzero, $v_1/v_2$ is a non-vanishing section of the calibration bundle. Let 
	\[
	\eta^{\sim}= ((\phi:C\rightarrow C_0,f,x),(C_0,x, [u], e))
	\]
be the underlying quasimap with entangled tails. By the construction of calibration bundle, $\xi$ is fixed if and only if the action of $\text{Aut}(\eta^{\sim})$ on the calibration bundle induces a surjection $\text{Aut}(\eta^{\sim})\twoheadrightarrow\C^*$. The result then follows by Lemma \ref{infaut}.
\end{proof}

For $k\geq1$, let $\overrightarrow{d}:=((d',{d_0''}),(0,{d}_1''),\cdots,(0,{d}_k''))$ be a $(k+1)$-tuple of pairs of non-negative integers such that $\text{deg}(\tilde{\beta})=(d',d_0'')+{(0,d_1'')}+\cdots +{(0,d''_k)}$ and ${d}_i''=d_0$ for $i=1,\cdots,k$.
Define $F_{\overrightarrow{d}}\subset MQ^{\sim}_{g,n}(X^{ss},\tilde{\beta})$ by 
\[
\begin{split}
	F_{\overrightarrow{d}}=\{\eta\vert\text{ }\eta \text{ has }& \text{exactly $k$ entangled tails} \\& \text{ which are fixed tails with degrees }(0,{d}''_1),\cdots,(0,{d}''_k)\}.
\end{split}
\]
From equation (\ref{eq10}), we have a projection map $MQ^{\sim}_{g,n}([E^{ss}/G],\beta)\xrightarrow{\text{pr}_2}MQ^{\sim}_{g,n}(X^{ss},(d',{\beta''}))$. Pulling back $F_{\overrightarrow{d}}$ along $\text{pr}_2$ and restricting it to $\epsilon_0$-stable master space, we have the fixed components 
\[
F_{\overrightarrow{\beta}}:=\text{pr}_2^*(F_{\overrightarrow{d}})\vert_{MQ^{\epsilon_0}_{g,n}([E^{ss}/G],\beta)},
\]
indexed by $(k+1)$-tuple of effective curves ${\overrightarrow{\beta}}:=(\overline{\beta},\beta_1,\cdots,\beta_k)$ such that $\beta=\overline{\beta}+\beta_1+\cdots+\beta_k$ with $\text{deg}(\beta_i)=(0,d_0)$ for each $i$ and $\text{deg}(\overline{\beta})=(d',d_0'')$.

\begin{lemma}
$F_{\overrightarrow{\beta}}\subset MQ^{\epsilon_0}_{g,n}([E^{ss}/G],\beta)^{\C^*}$ is an closed and open substack.
\end{lemma}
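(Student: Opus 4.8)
The plan is to reduce the statement to the analogous one for the master space over the affine GIT target $X^{ss}=[\tilde{V}^{ss}/(S\times G)]$, which is part of \cite{YZ}, and then transport it along the fiber-product presentation of $[E^{ss}/G]$. First I would invoke the isomorphism (\ref{eq10}),
\[
MQ^{\sim}_{g,n}([E^{ss}/G],\beta)\cong\mathfrak{M}^{tw,\text{pre}}_{g,n}(Y,\beta')\times_{Q^{\text{pre}}_{g,n}([W^{ss}/S],d')}MQ^{\sim}_{g,n}(X^{ss},\tilde{\beta}),
\]
under which $MQ^{\epsilon_0}_{g,n}([E^{ss}/G],\beta)$ is the open substack cut out by $\epsilon_0$-stability. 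The key point is that the $\C^*$-action, which only rescales the calibration section $v_1$, is trivial on $\mathfrak{M}^{tw,\text{pre}}_{g,n}(Y,\beta')$ and on $Q^{\text{pre}}_{g,n}([W^{ss}/S],d')$, and nontrivial only on the factor $MQ^{\sim}_{g,n}(X^{ss},\tilde{\beta})$. Hence forming $\C^*$-fixed loci commutes with this fiber product, and under $\text{pr}_2$ the substack $F_{\overrightarrow{\beta}}$ is, by its very definition, the preimage of the corresponding fixed component $F_{\overrightarrow{d}}\subset MQ^{\epsilon_0}_{g,n}(X^{ss},\tilde{\beta})^{\C^*}$. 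Since the preimage of an open-and-closed substack is again open and closed, the lemma reduces to the assertion that $F_{\overrightarrow{d}}$ is open and closed in $MQ^{\epsilon_0}_{g,n}(X^{ss},\tilde{\beta})^{\C^*}$; and because $\tilde{V}$ is affine with at worst local complete intersection singularities and $S\times G$ is reductive, this is exactly the setting of \cite{YZ}, whose argument applies without change.

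For that reduced statement the scheme of proof is as follows. By the preceding lemmas the $\C^*$-fixed locus of the master space with target $X^{ss}$ is the disjoint union of $F^{+}$, $F^{-}$ and the strata $F_{\overrightarrow{d}}$ (we are in Case II, $2g-2+n+\epsilon_0 d>0$), with $\overrightarrow{d}$ ranging over a discrete set of admissible degree splittings; so it suffices to prove that each $F_{\overrightarrow{d}}$ is open in the fixed locus, as then each piece, being the complement of a finite union of open pieces, is automatically closed. Openness I would get from rigidity: over a connected family of $\C^*$-fixed $\epsilon_0$-stable objects with $v_1,v_2\neq0$ the $\C^*$-action is realized through automorphisms of the universal family, and by the last sentence of Lemma \ref{infaut} it acts with one and the same nonzero weight on the tangent line to $C_0$ at the node of every entangled tail. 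This forces the number of entangled tails, the property that each of them is a fixed tail, and their degrees --- equivalently the class $\overrightarrow{d}$ --- to be locally constant along the fixed locus, and it likewise pins down which of $v_1, v_2$ (if either) vanishes; hence each stratum is open.

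The hard part is really only this input from \cite{YZ}: checking directly that ``each entangled tail is a fixed tail of the prescribed degree'' is a closed, and not merely a locally closed, condition on the fixed locus requires some bookkeeping with the iterated blow-up defining $\tilde{\mathfrak{M}}_{g,n,d}$ and with the calibration bundle. Nothing here, however, is new for GIT stack bundles: the single genuinely new datum, the stable map $f:C\to Y$, sits on the factor $\mathfrak{M}^{tw,\text{pre}}_{g,n}(Y,\beta')$, which carries the trivial $\C^*$-action, so it never enters the fixed-point analysis, and the reduction in the first step isolates this cleanly.
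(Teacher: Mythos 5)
Your proof is correct and takes essentially the same approach as the paper: reduce via the fiber-product description \eqn{eq10} and the definition $F_{\overrightarrow{\beta}}=\mathrm{pr}_2^*(F_{\overrightarrow{d}})|_{MQ^{\epsilon_0}_{g,n}([E^{ss}/G],\beta)}$ to the corresponding open-and-closedness of $F_{\overrightarrow{d}}$ inside $MQ^{\epsilon_0}_{g,n}(X^{ss},\tilde\beta)^{\C^*}$, which is \cite[Lemma 6.5.2]{YZ}. The paper's proof is just that one-line citation; your second paragraph, which re-derives the cited lemma via the decomposition of the fixed locus and a rigidity argument, is a sound supplement but is not part of the paper's argument.
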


\begin{proof}
This follows by noting that $F_{\overrightarrow{d}}\subset MQ^{\epsilon_0}_{g,n}(X^{ss},(d',\tilde{\beta}))^{\C^*}$ is closed and open substack \cite[Lemma 6.5.2]{YZ}.
\end{proof}

Let $\mathfrak{C}^*_{k}\subset \tilde{\mathfrak{M}}_{g,n,d}$ be the locally closed smooth substack with exactly $k$ entangled tails. By \cite[Lemma 6.5.3]{YZ}, $F_{\overrightarrow{d}}\rightarrow\tilde{\mathfrak{M}}_{g,n,d}$ factors through $\mathfrak{C}^*_{k}$. Hence we have the following sequence of morphisms,
\[
F_{\overrightarrow{\beta}}\rightarrow F_{\overrightarrow{d}}\rightarrow\mathfrak{C}^*_{k}\rightarrow \mathfrak{Z}_{(k)}.
\]
Moreover, recall the gluing morphism (\ref{spliteq}) given by
\[
\tilde{\mathfrak{gl}_k}: \tilde{\mathfrak{M}}_{g,n+k,d-kd_0}\times'(\mathfrak{M}^{\text{wt,ss}}_{0,1,d_0})^k\rightarrow \mathfrak{Z}_{(k)}.
\]
Using the above two morphisms, we form a fibered diagram.
\begin{equation}
	\begin{tikzcd}
		{\tilde{\mathfrak{gl}_k}^*F_{\overrightarrow{\beta}}}\arrow[r]\arrow[d]&{\tilde{\mathfrak{gl}_k}^*F_{\overrightarrow{d}}}\arrow[r]\arrow[d]&{\tilde{\mathfrak{gl}_k}^*\mathfrak{C}^*_{k}}\arrow[r]\arrow[d]&{\tilde{\mathfrak{M}}_{g,n+k,d-kd_0}\times'(\mathfrak{M}^{\text{wt,ss}}_{0,1,d_0})^k}\arrow[d, "\tilde{\mathfrak{gl}_k}"]\\
		{F_{\overrightarrow{\beta}}}\arrow[r]&{F_{\overrightarrow{d}}}\arrow[r]&{\mathfrak{C}^*_{k}}\arrow[r]&{\mathfrak{Z}_{(k)}}.
	\end{tikzcd}
\end{equation}

Let $\mathcal{C}_{\tilde{\mathfrak{gl}_k}^*F_{\overrightarrow{\beta}}}$ be the universal curve over $\tilde{\mathfrak{gl}_k}^*F_{\overrightarrow{\beta}}$. Then $\mathcal{C}_{\tilde{\mathfrak{gl}_k}^*F_{\overrightarrow{\beta}}}$ can be obtained by gluing $\mathcal{E}_1,\cdots,\mathcal{E}_k$ to the last $k$-markings of $\mathcal{C}_{\overline{\beta}}$, where $$\mathcal{E}_1,\cdots,\mathcal{E}_k$$ is the pullback of universal curve of $(\mathfrak{M}^{\text{wt,ss}}_{0,1,d_0})^k$ and $\mathcal{C}_{\overline{\beta}}$ is the pullback of $\tilde{\mathfrak{M}}_{g,n+k,d-kd_0}$. Let $p_i\in \mathcal{C}_{\tilde{\mathfrak{gl}_k}^*F_{\overrightarrow{\beta}}}$  be the node on $\mathcal{E}_i$. Let $T_{p_i}\mathcal{E}_i$ be the line bundle formed by the relative tangent bundle along $p_i$ and denote $\Theta_i:=T_{p_i}\mathcal{E}_i\otimes T_{p_i}\mathcal{C}_{\overline{\beta}}$. Then by \cite[Lemma 2.5.5]{YZ} we have 
\[
\Theta_1\cong \cdots\cong \Theta_k=:\Theta.
\]

Let $ \mathbb{M}_{\overline{\beta}} $ be the calibration bundle on $\tilde{\mathfrak{M}}_{g,n+k,d-kd_0}$ and let $\mathbb{M}_{\overrightarrow{\beta}}$ be the calibration bundle on $F_{\overrightarrow{\beta}}$. Using a suitable pullback, we have a canonical isomorphism of line bundles on $\tilde{\mathfrak{gl}_k}^*F_{\overrightarrow{\beta}}$,
\[
\mathbb{M}_{\overline{\beta}}^{\vee} \otimes\Theta_1\otimes\cdots\otimes\Theta_k\cong\mathbb{M}_{\overrightarrow{\beta}}^{\vee}\overset{v_2/v_1}{\cong}\O_{\tilde{\mathfrak{gl}_k}^*F_{\overrightarrow{\beta}}}.
\]
Hence we have a canonical isomorphism on $\tilde{\mathfrak{gl}_k}^*F_{\overrightarrow{\beta}}$:
\begin{equation}\label{eq17}
	\Theta^{\otimes k}\cong \mathbb{M}^{\vee}_{\overline{\beta}}.
\end{equation}

Note that $\mathcal{C}_{\overline{\beta}}$ does not have any length-$d_0$ relevant base points. Therefore by restricting universal quasimaps to $\mathcal{C}_{\overline{\beta}}$, we have
\[
\tilde{\mathfrak{gl}_k}^*F_{\overrightarrow{\beta}}\rightarrow \tilde{Q}^{\epsilon_{+}}_{g,n+k}([E^{ss}/G],\overline{\beta}).
\]

Let 
\begin{equation}\label{eqn:Z_roots}
Z\rightarrow \tilde{Q}^{\epsilon_{+}}_{g,n+k}([E^{ss}/G],\overline{\beta}),
\end{equation}
be the stack of $k$-th roots of the pullback to $\tilde{Q}^{\epsilon_{+}}_{g,n+k}([E^{ss}/G],\overline{\beta})$ of the line bundle $\mathbb{M}^{\vee}_{\overline{\beta}}$ and let $L\to Z$ be the universal $k$-th root. Then (\ref{eq17}) gives rise to
\begin{equation}\label{eq18}
\tilde{\mathfrak{gl}_k}^*F_{\overrightarrow{\beta}}\rightarrow Z.
\end{equation}

Note that $\tilde{Q}^{\epsilon_{+}}_{g,n+k}([E^{ss}/G],\overline{\beta})\rightarrow \tilde{\mathfrak{M}}_{g,n+k,d-kd_0}$ factors through  $$\tilde{\mathcal{G}}^{\text{tw,}d'}_{g,n+k,\beta_0''}:={\mathcal{G}}^{\text{tw,}d'}_{g,n+k,\beta_0''}\times_{\mathfrak{M}^{\text{wt,ss}}_{g,n+k,\overline{d}}}\tilde{\mathfrak{M}}_{g,n+k,\overline{d}}.$$ 
 Define $Z_V\rightarrow \tilde{\mathcal{G}}^{\text{tw,}d'}_{g,n+k,\beta_0''}$ to be the stack of $k$-th roots of the pullback of  $\mathbb{M}^{\vee}_{\overline{\beta}}$. Then, there is a natural isomorphism 
\begin{equation}\label{eq21}
	Z\cong Z_V \times_{\mathfrak{Bun}^{\text{tw,wt}}_S}\mathfrak{M}^{\text{tw,pre}}_{g,n+k}(Y,\beta').
\end{equation}

Next we restrict our quasimaps to $\mathcal{E}_i$, namely the entangled tails. By definition, it consists of a constant map $f_i:\mathcal{E}_i\rightarrow Y$ and a quasimap with calibrated tails $[u_i]:\mathcal{E}_i\rightarrow X$ of degree $(0,d_0)$ and a base point of length $d_0$. This allows us to express $\tilde{\mathfrak{gl}_k}^*F_{\overrightarrow{\beta}}\vert_{\mathcal{E}_i} $ as a space over $Y$ with $\tilde{\mathfrak{gl}_k}^*F_{\overrightarrow{d}}\vert_{\mathcal{E}_i}$ as its fibers, 
\begin{align*}
\tilde{\mathfrak{gl}_k}^*F_{\overrightarrow{\beta}}\vert_{\mathcal{E}_i}  &\cong  Y\times_{[W/S]}\tilde{\mathfrak{gl}_k}^*F_{\overrightarrow{d}}\vert_{\mathcal{E}_i},\\
&\cong (Y\times_{[W/S]}I_{\mu}(X^{ss}))\times_{I_{\mu}(X^{ss})}\tilde{\mathfrak{gl}_k}^*F_{\overrightarrow{d}}\vert_{\mathcal{E}_i}\\
&\cong I_{\mu}([E^{ss}/G])\times_{I_{\mu}(X)}\tilde{\mathfrak{gl}_k}^*F_{\overrightarrow{d}}\vert_{\mathcal{E}_i}.
\end{align*}
As in \cite[Section 6.5]{YZ}, the goal is to relate the restrictions of quasimaps to $\mathcal{E}_i$ with fixed-domain quasimaps parametrized by the graph space. To do this, we need to construct a map from $\mathcal{E}_i$ to $\mathbb{P}^1$. Being a rational tail, $\mathcal{E}_i$ has two special points. This is not enough to canonically construct a map to $\mathbb{P}^1$. The solution to this, introduced in \cite[Section 6.5]{YZ}, is to include the choice of a tangent vector. This is done as follows.

Let $V^*_i\rightarrow Z$ be the total space of $L^{\otimes\mathfrak{r}_i}\otimes(T_{p_i}C_{\overline{\beta}}^{\otimes-\mathfrak{r}_i})$ minus its zero section. Define 
\[
Z':=V^*_1\times_Z \cdots \times_Z V^*_k,
\]
and twist $\mathcal{E}_i$ with it to get 
\begin{equation}\label{eqn:e-tail_i}
\mathcal{E}'_i:=\mathcal{E}_i\times_Z Z'=\mathcal{E}_{i}\times_{\tilde{\mathfrak{gl}_k}^*F_{\overrightarrow{\beta}}}(\tilde{\mathfrak{gl}_k}^*F_{\overrightarrow{\beta}}\times_Z Z').
\end{equation}
An $R$-point of $\tilde{\mathfrak{gl}_k}^*F_{\overrightarrow{\beta}}\times_Z Z'$ consists of a morphism $g:R\rightarrow \tilde{\mathfrak{gl}_k}^*F_{\overrightarrow{\beta}}$ along with non-vanishing sections $s_i\in H^0(R, g^*(T_{p_i}\mathcal{E}_i)^{\otimes\mathfrak{r}_i})$ for $i=1,\cdots, k$. And the family
\begin{equation*}
\mathcal{E}_i'|_R\to R    
\end{equation*}
obtained by pulling back $\mathcal{E}'_i\to \tilde{\mathfrak{gl}_k}^*F_{\overrightarrow{\beta}}$ via 
$g:R\rightarrow \tilde{\mathfrak{gl}_k}^*F_{\overrightarrow{\beta}}$ is the $i$-th entangled tail of the induced family of curves over $R$.

For each $i=1,\cdots, k$, 
$\mathcal{E}'_i$ carries the $\C^*\times (\C^*)^k$-action given by $\C^*$-action on $\mathcal{E}_i$ induced by its action on master space (and trivial on $ Z' $) and the $(\C^*)^k$-action given by scaling sections $(s_1,\cdots,s_k)$ (and acts trivially on $\mathcal{E}_i$). 

For each $i$, there is a unique morphism 
\begin{equation}\label{eqn:e-tail_i_P1}
g_i:\mathcal{E}'_i\rightarrow \P^1,
\end{equation}
which maps the marking $p_i$ to $\infty$, the unique base point to $0$, and the vector $s_i$ to $v_{\infty}$ (a fixed nonzero tangent vector to $\P^1$ at $\infty$). Moreover, by \cite[Lemma 6.5.4]{YZ}, the $\C^*\times(\C^*)^k$-action on $\mathcal{E}'_i$ is compatible with the standard $\C^*$-action on $\P^1$: Let $(\lambda,t)=(\lambda,t_1,\cdots, t_k)\in \C^*\times (\C^*)^k$, then the diagram
\begin{equation}\label{cd19}
	\begin{tikzcd}
		{\mathcal{E}'_i}\arrow[r, "g_i"]\arrow[d, "(\lambda^k\text{,}t)"']&{\P^1}\arrow[d,"\lambda^{\mathfrak{r}_i} t^{-1}_i"]\\
		{\mathcal{E}'_i}\arrow[r,"g_i"]&{\P^1}	
	\end{tikzcd}
\end{equation}
is commutative.

Recall the definition of quasimap graph space $QG_{0,1}(X^{ss},(0,d_i))$ and let $$QG^*_{0,1}(X^{ss},(0,d_i))\subset QG_{0,1}(X^{ss},(0,d_i))$$ be an open substack where the domain curve is irreducible. Define $$F_{*,d_i}\subset QG^*_{0,1}(X^{ss},(0,d_i))$$ to be the fixed (with respect to canonical $\C^*$-action) component where the marking
is at $\infty$ and a base point of length $d_0$ is at $0$.

Now consider the family of curves $\mathcal{E}'_i\rightarrow \tilde{\mathfrak{gl}_k}^*F_{\overrightarrow{\beta}}\times_Z Z'$ obtained in (\ref{eqn:e-tail_i}) together with the morphism $g_i:\mathcal{E}'_i\rightarrow \P^1$ obtained in (\ref{eqn:e-tail_i_P1}). This gives a morphism 
\[
\tilde{\mathfrak{gl}_k}^*F_{\overrightarrow{\beta}}\times_Z Z'\rightarrow QG_{0,1}(X^{ss},\tilde{\beta}_i),
\]
where deg$(\tilde{\beta}_i)=(0,d_i'')$. As $\mathcal{E}'_i$ is a fixed tail, the morphism maps into $\C^*$-fixed locus $F_{*,\tilde{\beta}_i}$. As $ F_{*,\tilde{\beta}_i} $ is $\C^*$-fixed, by diagram (\ref{cd19}) we get that the morphism is invariant under the action of $(\C^*)^k$. Hence the map descends to 
\[
\tilde{\mathfrak{gl}_k}^*F_{\overrightarrow{\beta}}\rightarrow  F_{*,\tilde{\beta}_i}.
\]
Combining the above map with the evaluation map at marked point $p_i$ corresponding to $\mathcal{E}_i$, we define a morphism
\begin{equation}\label{eq19}
	\tilde{\mathfrak{gl}_k}^*F_{\overrightarrow{\beta}}\rightarrow I_{\mu}([E^{ss}/G])\times_{I_{\mu}(X^{ss})}F_{*,\tilde{\beta}_i}=:F_{*,\beta_i},
\end{equation}
where the fiber product is formed by the evaluation map at special point $F_{*,\tilde{\beta}_i}\xrightarrow{ev} I_{\mu}(X^{ss})$ and the induced map $I_{\mu}([E^{ss}/G])\rightarrow I_{\mu}(X^{ss})$ on cyclotomic inertia stacks. 

For the stack of $k$-th roots $Z$ in (\ref{eqn:Z_roots}), composing with the evaluation map of $\tilde{Q}^{\epsilon_{+}}_{g,n+k}([E^{ss}/G],\overline{\beta})$ at the last $k$ markings gives
\[
ev_Z:Z\rightarrow (I_{\mu}[E^{ss}/G])^k
\]
Also consider the composition 
\[
ev_{*,\beta_i}:F_{*,\beta_i}\rightarrow I_{\mu}[E^{ss}/G]\rightarrow I_{\mu}[E^{ss}/G],
\]
where $F_{*,\beta_i}\rightarrow I_{\mu}[E^{ss}/G]$ is the projection map and $I_{\mu}[E^{ss}/G]\rightarrow I_{\mu}[E^{ss}/G]$ is the involution inverting the band. Using the above two maps, we can construct a fiber product
\[
Z\times_{(I_{\mu}[E^{ss}/G])^k}\prod_{i=1}^{k}F_{*,\beta_i}.
\]
By combining equations (\ref{eq18}) and (\ref{eq19}) we have a morphism
\begin{equation}\label{eq20}
\varphi: \tilde{\mathfrak{gl}_k}^*F_{\overrightarrow{\beta}}\rightarrow Z\times_{(I_{\mu}[E^{ss}/G])^k}\prod_{i=1}^{k}F_{*,\beta_i}.
\end{equation}

\begin{remark}\label{rem36}
	Note that, by definition of $F_{*,\beta_i}$, we can rewrite 
	\[
	Z\times_{(I_{\mu}[E^{ss}/G])^k}\prod_{i=1}^{k}F_{*,\beta_i}=Z\times_{(I_{\mu}(X^{ss}))^k}\prod_{i=1}^{k}F_{*,\tilde{\beta}_i}
	\]
	in terms of fixed locus data on graph quasimap to $X$.
\end{remark}

\begin{lemma}\label{lem34}
	The morphism $\varphi$ is representable, finite, \'etale, of degree $\prod_{i=1}^{k}\mathfrak{r}_i$.
\end{lemma}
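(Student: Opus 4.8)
The plan is to reduce the statement to the analogous one for the master space with affine target $X^{ss}$ — which is available in \cite{YZ} — by exhibiting $\varphi$ as a base change of the corresponding morphism $\varphi_V$ for $MQ^{\epsilon_0}_{g,n}(X^{ss},\tilde{\beta})$. Concretely, I would first observe that both the source and the target of $\varphi$ are built from their $X^{ss}$-counterparts by the fiber products linking the $[E^{ss}/G]$-moduli to the $X^{ss}$-moduli: the source $\tilde{\mathfrak{gl}_k}^*F_{\overrightarrow{\beta}}$ is the fiber product of $\tilde{\mathfrak{gl}_k}^*F_{\overrightarrow{d}}$ with $Y$ over $[W^{ss}/S]$ along each leg $\mathcal{E}_i$ (the three displayed isomorphisms preceding (\ref{eq18})) and maps to $\tilde{Q}^{\epsilon_+}_{g,n+k}([E^{ss}/G],\overline{\beta})$, which in turn is $\mathfrak{M}^{\text{tw,pre}}_{g,n+k}(Y,\beta')\times_{\mathfrak{Bun}^{\text{tw,wt}}_S}\tilde{\mathcal{G}}^{\text{tw,}d'}_{g,n+k,\beta_0''}$; the stack of roots $Z$ is $Z_V\times_{\mathfrak{Bun}^{\text{tw,wt}}_S}\mathfrak{M}^{\text{tw,pre}}_{g,n+k}(Y,\beta')$ by (\ref{eq21}); and each factor $F_{*,\beta_i}=I_{\mu}([E^{ss}/G])\times_{I_{\mu}(X^{ss})}F_{*,\tilde{\beta}_i}$ by construction. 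Using Remark \ref{rem36} to rewrite the target of $\varphi$ as $Z\times_{(I_{\mu}(X^{ss}))^k}\prod_i F_{*,\tilde{\beta}_i}$, one checks that $\varphi$ is the pullback of $\varphi_V$ along $\mathfrak{M}^{\text{tw,pre}}_{g,n+k}(Y,\beta')\to\mathfrak{Bun}^{\text{tw,wt}}_S$ (together with the legwise base change $Y\to[W^{ss}/S]$). Representability, finiteness, \'etaleness, and degree are all stable under base change, so the claim reduces to the same statement for $\varphi_V$, proved in \cite{YZ} (cf.\ the discussion around \cite[Lemma~6.5.4]{YZ}).

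I would also spell out the geometric reason for the degree. Over a geometric point, a point of the target consists of a quasimap-with-entangled-tails datum supported on $\mathcal{C}_{\overline{\beta}}$ together with a $k$-th root $L$ of $\mathbb{M}^{\vee}_{\overline{\beta}}$, and, for each $i$, a $\C^*$-fixed graph quasimap $g_i\colon\mathcal{E}_i\to X^{ss}$ with base point at $0$, marking at $\infty$, whose $(T_{p_i}\mathcal{E}_i)^{\otimes\mathfrak{r}_i}$ is trivialized by the condition $s_i\mapsto v_{\infty}$ — the $\mathcal{E}_i$-parts being required to agree with the $L$-part under the evaluation maps and the isomorphism (\ref{eq17}) $\Theta^{\otimes k}\cong\mathbb{M}^{\vee}_{\overline{\beta}}$. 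Reconstructing a point of $\tilde{\mathfrak{gl}_k}^*F_{\overrightarrow{\beta}}$ from this data amounts, exactly as in the proof of Lemma \ref{lem32}, to choosing for each $i$ an $\mathfrak{r}_i$-th root of $v_{\infty}$ at the node $p_i$, and these $k$ choices are mutually independent; hence the fiber of $\varphi$ is a torsor under $\prod_{i=1}^k\mu_{\mathfrak{r}_i}$, which yields representability, finiteness, and degree $\prod_{i=1}^k\mathfrak{r}_i$, while \'etaleness follows because such a choice of roots deforms uniquely together with the target datum.

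The part I expect to be the real obstacle is the careful identification in the first paragraph — verifying that $\varphi$ is literally a base change of $\varphi_V$, in particular that the $k$-th-root stack $Z$, the calibration isomorphism (\ref{eq17}), and the $(\C^*)^k$-rigidification built into $F_{*,\tilde{\beta}_i}$ all behave compatibly with the fiber product over $Y$ — together with the attendant bookkeeping of the $\mu_{\mathfrak{r}_i}$-gerbes at the gluing nodes. Since the map $f$ to $Y$ is constant along each leg $\mathcal{E}_i$ and the calibration and entanglement structures involve only the $X^{ss}$-side of the data, none of this introduces phenomena absent from \cite{YZ}; once the reduction is in place, the $X^{ss}$-statement there finishes the proof.
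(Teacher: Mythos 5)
Your proposal takes essentially the same approach as the paper: reduce to the $X^{ss}$ statement, using the fact that the data on the $[E^{ss}/G]$-side are obtained from the $X^{ss}$-side by fiber products and that the entanglement, calibration, and root-stack structures depend only on the $X^{ss}$-quasimap data. The paper does this by passing to a faithfully flat cover $Z''\to Z$ trivializing $L$ and $T_{p_i}\mathcal{C}_{\overline\beta}$, observing that a lift of an $R$-point amounts to choosing $\mathfrak{r}_i$-th roots of a certain nonvanishing section at each node, and then invoking \cite[Lemma~6.5.5]{YZ} for the $X^{ss}$ case; your framing of $\varphi$ as a base change of $\varphi_V$, together with your second paragraph's $\prod_i\mu_{\mathfrak{r}_i}$-torsor description of the fiber, is the same reduction, just packaged slightly differently.
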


\begin{proof}
Consider a faithfully flat 
cover $Z''\rightarrow Z$ over which $L$ and $T_{p_i}\mathcal{C}_{\overline{\beta}}$ are trivialized. Given an $R$-point $\xi$ of $Z''\times_{(I_{\mu}[E^{ss}/G])^k}\prod_{i=1}^{k}F_{*,\beta_i}$, we want to show that its lifting to $Z''\times_Z\tilde{\mathfrak{gl}_k}^*F_{\overrightarrow{\beta}}$ is equivalent to the choice of an $\mathfrak{r}_i$-th root of a nonzero section of certain line bundle.
	
Let $\xi\in Z''(R)\times_{(I_{\mu}[E^{ss}/G])^k}\prod_{i=1}^{k}F_{*,\beta_i}(R)$. Glue the special point of rational tail $\mathcal{E}''_i$ from $F_{*,\beta_i}(R)$ to the $(n+i)$-th marked point of the underlying curve $\mathcal{C}''_{\overline{\beta}}$ from $Z''(R)$. Similarly glue the quasimaps from both spaces. To get an $R$-point of $Z''\times_Z\tilde{\mathfrak{gl}_k}^*F_{\overrightarrow{\beta}}$, we need to choose the entanglement (i.e. a map $R\rightarrow\tilde{\mathfrak{M}}_{g,n,d}$) and define calibration bundle ($N,v_1,v_2$).
	
Note that the entanglement and calibration bundle data (by definition) depend completely on the underlying curve of the quasimap to $X^{ss}$. Hence the lemma follows from a similar result \cite[Lemma 6.5.5]{YZ} for quasimaps to $X^{ss}$.
\end{proof}

Next we extend the group actions in diagram (\ref{cd19}) over corresponding universal curves. Consider the  fibered diagram
\begin{equation}\label{cd22}
\begin{tikzcd}
	{Z'\times_Z\tilde{\mathfrak{gl}_k}^{*}F_{\overrightarrow{\beta}}}\arrow[r,"\varphi'"]\arrow[d,"p_1"]&{Z'\times_{(I_{\mu}(X^{ss}))^k}\prod_{i=1}^{k}F_{*,\tilde{\beta}_i}}\arrow[d,"p_2"]\\
	{\tilde{\mathfrak{gl}_k}^{*}F_{\overrightarrow{\beta}}}\arrow[r,"\varphi"]&{Z\times_{(I_{\mu}(X^{ss}))^k}\prod_{i=1}^{k}F_{*,\tilde{\beta}_i}}.
\end{tikzcd}
\end{equation}

Let $\mathcal{C}_1\to \tilde{\mathfrak{gl}_k}^{*}F_{\overrightarrow{\beta}}$ and $\mathcal{C}_2\to Z\times_{(I_{\mu}(X^{ss}))^k}\prod_{i=1}^{k}F_{*,\tilde{\beta}_i}$ be the universal curves. Pulling back, we get an isomorphism of universal curves 
\begin{equation}\label{eq23}
	\tilde{\varphi}:p_1^*\mathcal{C}_1\rightarrow p_2^*\mathcal{C}_2.
\end{equation}
We can glue $\C^*\times(\C^*)^k$-action on $\mathcal{E}_i$ for each $i=1,\cdots,k$ to get a $\C^*\times(\C^*)^k$-action on $p_1^*\mathcal{C}_1$. Moreover, analogous to diagram (\ref{cd19}), this action gives following commutative diagram: For any  $(\lambda,t)=(\lambda,t_1,\cdots,t_k)\in \C^*\times (\C^*)^k$, we have 
\begin{equation}\label{cd24}
\begin{tikzcd}
	{p_1^*C_1}\arrow[r,"\tilde{\varphi}"]\arrow[d,"(\lambda^k\text{,}t)"']&{p_2^*C_2}\arrow[d,"(\lambda^{\mathfrak{r}_1} t^{-1}_1\text{,}\cdots\text{,}\lambda^{\mathfrak{r}_k} t^{-1}_k\text{,}t)"]\\
	{p_1^*C_1}\arrow[r,"\tilde{\varphi}"]&{p_2^*C_2}.
\end{tikzcd}
\end{equation}

This defines a $\C^*\times(\C^*)^k$-action on $p_2^*\mathcal{C}_2$. To be precise, each  $\lambda^{\mathfrak{r}_i} t^{-1}_i$ acts by the standard $\C^*$-action on $\P^1$-component corresponding to $\mathcal{E}_i$ and $t\in (\C^*)^k$ acts by the standard scaling of sections of $Z'$. In conclusion, $\tilde{\varphi}:p_1^*\mathcal{C}_1\rightarrow p_2^*\mathcal{C}_2$ is a $\C^*\times(\C^*)^k$-equivariant map, where the $\C^*\times(\C^*)^k$-action on $p_2^*\mathcal{C}_2$ is given by vertical right arrow in the above diagram. 

Now we have all we need to prove an important lemma. Let $[Z]^{\text{vir}}\in A_*(Z)$ be the flat pullback of $[\tilde{Q}^{\epsilon_{+}}_{g,n+k}([E^{ss}/G],\overline{\beta})]^{\text{vir}}$ and let $[\tilde{\mathfrak{gl}_k}^*F_{\overrightarrow{\beta}}]^{\text{vir}}$ be the flat pullback of $[F_{\overrightarrow{\beta}}]^{\text{vir}}$. Let $\tilde{\psi}(\mathcal{E}_i)$ be the orbifold $\psi$-class of the rational tail $\mathcal{E}_i$ at the unique node and let $\tilde{\psi}_{n+i}$ be the orbifold $\psi$-class of $\tilde{Q}^{\epsilon_{+}}_{g,n+k}([E^{ss}/G],\overline{\beta})$ at the $(n+i)$-th marking. Let $\psi(\mathcal{E}_i)$ and $\psi_{n+i}$ be the corresponding coarse $\psi$-classes. Let ${\mathcal{D}_i}\subset\tilde{\mathfrak{M}}_{g,n,d}$ be the divisor defined by the closure of the locus where there are exactly ${i}$ entangled tails. Let $I_{\tilde{\beta}_i}(z)$ be as defined in Section \ref{IFunc} for the quasimap graph space $QG_{0,1}(X^{ss},\tilde{\beta}_i)$. Moreover, to simplify  notation, we define \[
\mathbb{I}_{\tilde{\beta}_i}(z):= \dfrac{1}{e_{\C^*}(N^{\text{vir}}_{F_{\star,\beta}/QG_{0,1}(X^{ss},\beta)})}.
\]
Note that $I_{\tilde{\beta}_i}(z)=\mathfrak{r}^2(\hat{\text{ev}})_*(\mathbb{I}_{\tilde{\beta}_i}(z)\cap[F_{\star,\beta}]^{\text{vir}})$.

\begin{lemma}\label{lem35}
Via the morphism $\varphi$ in (\ref{eq20}), we have
	\[
	[\tilde{\mathfrak{gl}_k}^*F_{\overrightarrow{\beta}}]^{\text{vir}}=\varphi^*( [Z]^{\text{vir}}\times_{(I_{\mu}[E^{ss}/G])^k}\prod_{i=1}^{k}[F_{*,\beta_i}]^{\text{vir}}),
	\]
and
	\[
	\begin{split}
		\dfrac{1}{e_{\C^*}(N^{\text{vir}}_{F_{\overrightarrow{\beta}}/MQ^{\epsilon_0}_{g,n}([E^{ss}/G],\beta)} \vert_{\tilde{\mathfrak{gl}_k}^*F_{\overrightarrow{\beta}}})} = \dfrac{\prod_{i=1}^{k}(\dfrac{\mathfrak{r}_i}{k}z+\psi(\mathcal{E}_i))}{-\dfrac{z}{k}-\tilde{\psi}(\mathcal{E}_1)-\tilde{\psi}_{n+1}-\sum_{i=k}^{\infty} |\mathcal{D}_i|}\cdot \boxtimes_{i=1}^{k}\mathbb{I}_{\tilde{\beta}_i}(\dfrac{\mathfrak{r}_i}{k}z+\psi(\mathcal{E}_i)).
	\end{split}\]
\end{lemma}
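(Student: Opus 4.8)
The plan is to deduce both formulas from their counterparts for quasimaps with target $X^{ss}=[\tilde{V}^{ss}/(S\times G)]$, which are \cite[Lemma 6.5.6]{YZ}, by base change along $Y\to [W^{ss}/S]$. The key structural observation is that, by the correspondence (\ref{eq10}) together with (\ref{eq21}) and Remark \ref{rem36}, every object appearing in the statement --- the fixed component $F_{\overrightarrow{\beta}}$, the root stack $Z\to \tilde{Q}^{\epsilon_{+}}_{g,n+k}([E^{ss}/G],\overline{\beta})$, the spaces $F_{*,\beta_i}$ and the morphism $\varphi$ of (\ref{eq20}) --- is obtained from the corresponding object with target $X^{ss}$ by forming a fibre product with $Y$ over the appropriate stack. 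Moreover the $\C^*$-action scaling $v_1$ is trivial on the factor $\mathfrak{M}^{\text{tw,pre}}_{g,n+k}(Y,\beta')$ and, via the equivariant identifications of diagrams (\ref{cd19}) and (\ref{cd24}), is pulled back from the $X^{ss}$-master space. Hence both the fixed part and the moving part of the obstruction theory are pulled back from the $X^{ss}$-side, the $Y$-direction contributing only $\C^*$-inert summands.

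First I would prove the virtual class identity. By Lemma \ref{lem34} the morphism $\varphi$ is representable, finite and \'etale, and $Z\to \tilde{Q}^{\epsilon_{+}}_{g,n+k}([E^{ss}/G],\overline{\beta})$ is the total space of a root stack, hence flat; virtual pullback commutes with both, and $[Z]^{\text{vir}}$ is by definition the flat pullback of $[\tilde{Q}^{\epsilon_{+}}_{g,n+k}([E^{ss}/G],\overline{\beta})]^{\text{vir}}$. It then suffices to identify the $\C^*$-fixed part of the (restricted) obstruction theory $\mathbb{E}_{MQ}$ of Section \ref{obthrmaster} with $\varphi^*$ of the product obstruction theory on $Z\times_{(I_{\mu}[E^{ss}/G])^k}\prod_{i=1}^{k}F_{*,\beta_i}$. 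This uses the decomposition of the universal curve over $\tilde{\mathfrak{gl}_k}^*F_{\overrightarrow{\beta}}$ into the main part $\mathcal{C}_{\overline{\beta}}$ --- which, as recorded before (\ref{eq18}), carries no length-$d_0$ relevant base point, so the induced quasimap is genuinely $\epsilon_{+}$-stable --- and the $k$ degree-$(0,d_0)$ rational tails $\mathcal{E}_1,\dots,\mathcal{E}_k$. Correspondingly $R\pi_*(u^*\mathbb{T}_{[V/(S\times G)]})$ and its $Y$-analogue split: the $\mathcal{C}_{\overline{\beta}}$-summand is the obstruction theory of $\tilde{Q}^{\epsilon_{+}}_{g,n+k}([E^{ss}/G],\overline{\beta})$ pulled back to $Z$, giving $[Z]^{\text{vir}}$; the $\mathcal{E}_i$-summand is the obstruction theory of the graph-space fixed locus $F_{*,\tilde{\beta}_i}$, giving $[F_{*,\beta_i}]^{\text{vir}}$; and the normalization of the universal map along the $k$ gluing nodes produces precisely the diagonal refinement $\Delta^!_{(I_{\mu}[E^{ss}/G])^k}$.

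Next I would compute the virtual normal bundle. $N^{\text{vir}}$ is the moving part of the virtual tangent complex $(\mathbb{E}_{MQ})^{\vee}$ restricted along $\varphi$, and I would split it into three blocks. (i) The smoothings of the $k$ nodes joining $\mathcal{E}_i$ to $\mathcal{C}_{\overline{\beta}}$: the $i$-th deformation line is $\Theta_i=T_{p_i}\mathcal{E}_i\otimes T_{p_i}\mathcal{C}_{\overline{\beta}}$, and combining $\Theta_1\cong\cdots\cong\Theta_k=\Theta$ with the identification $\Theta^{\otimes k}\cong\mathbb{M}^{\vee}_{\overline{\beta}}$ of (\ref{eq17}) and the equivariance (\ref{cd19}) of $g_i:\mathcal{E}'_i\rightarrow\P^1$, its equivariant Euler class comes out to $\frac{\mathfrak{r}_i}{k}z+\psi(\mathcal{E}_i)$, giving the numerator. (ii) The moving part of $R\pi_*(u^*\mathbb{T})$ over each $\mathcal{E}_i$: via the morphism (\ref{eq19}) to $F_{*,\tilde{\beta}_i}$ and the weight rescaling dictated by (\ref{cd19}), this equals $\mathbb{I}_{\tilde{\beta}_i}$ evaluated at the rescaled parameter, i.e.\ $\boxtimes_{i=1}^{k}\mathbb{I}_{\tilde{\beta}_i}\!\left(\frac{\mathfrak{r}_i}{k}z+\psi(\mathcal{E}_i)\right)$. (iii) The deformations of the calibration datum $(N,v_1,v_2)$ and of the entanglement: scaling $v_1$ contributes weight one, while comparing the calibration bundle of $\tilde{\mathfrak{M}}_{g,n,d}$ with the $\psi$-classes at the node of $\mathcal{E}_1$ and at the $(n+1)$-st marking across the blow-up sequence $\mathfrak{U}_m\leftarrow\cdots\leftarrow\mathfrak{U}_0$ introduces the correction $-\tilde{\psi}(\mathcal{E}_1)-\tilde{\psi}_{n+1}-\sum_{i=k}^{\infty}|\mathcal{D}_i|$; together these give the denominator $-\frac{z}{k}-\tilde{\psi}(\mathcal{E}_1)-\tilde{\psi}_{n+1}-\sum_{i=k}^{\infty}|\mathcal{D}_i|$. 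Multiplying the three blocks and using that no summand involves the $Y$-direction yields the second formula, by the same computation as for target $X^{ss}$.

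The step I expect to be the main obstacle is the $\C^*$-weight bookkeeping in blocks (i) and (ii): the factor $1/k$ and the integers $\mathfrak{r}_i$ enter through the \'etale degree of $\varphi$ (Lemma \ref{lem34}), the $k$-th root stack $Z$, and the twisted family $Z'\to Z$ with its $\C^*\times(\C^*)^k$-equivariance (\ref{cd24}), and one must verify that the composite identification $p_1^*\mathcal{C}_1\cong p_2^*\mathcal{C}_2$ of (\ref{eq23}) is genuinely $\C^*$-equivariant with the weights displayed in (\ref{cd24}), so that the rescaled arguments $\frac{\mathfrak{r}_i}{k}z+\psi(\mathcal{E}_i)$ and the denominator weight $-z/k$ come out correctly. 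Once this and the precise splitting of $\mathbb{E}_{MQ}$ along the gluing are in hand, the argument runs parallel to \cite[Lemma 6.5.6]{YZ}.
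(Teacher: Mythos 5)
Your proposal is correct and follows essentially the same route as the paper: both reduce to the target-$X^{ss}$ computation of \cite[Lemma 6.5.6]{YZ} via the fibre-product structure over $Y$ (with the $Y$-direction contributing only $\C^*$-fixed summands), identify the fixed parts of the obstruction theories through the $\C^*\times(\C^*)^k$-equivariant identification of universal curves over the twisted cover $Z'$ to get the virtual-class identity, and obtain the Euler-class formula by combining the moving part of the curve/calibration moduli (node smoothings, calibration and entanglement divisors, i.e.\ the analogue of \cite[Eq.\ (6.20)]{YZ}) with the tail contributions $\mathbb{I}_{\tilde\beta_i}$ under the weight substitution dictated by (\ref{cd19}) and (\ref{cd24}). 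The only difference is organizational: the paper packages the node-smoothing and calibration blocks together as the moving part of $\mathbb{L}_{M\tilde{\mathfrak{M}}_{g,n,d}}$ via the distinguished triangle $\mathbb{L}_{M\tilde{\mathfrak{M}}_{g,n,d}}\to\mathbb{E}_{MQ}\to\mathbb{E}_1$, whereas you split them out explicitly, which amounts to the same computation.
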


\begin{proof}
	Let $\mathbb{E}_{MQ}$ be the absolute perfect obstruction theory on $MQ^{\epsilon_0}_{g,n}([E^{ss}/G],\beta)$ (see Section \ref{obthrmaster}). We have a distinguished triangle induced by $MQ^{\epsilon_0}_{g,n}([E^{ss}/G],\beta)\rightarrow M\tilde{\mathfrak{M}}_{g,n,d}$:
	\[
	\mathbb{L}_{M\tilde{\mathfrak{M}}_{g,n,d}}\rightarrow\mathbb{E}_{MQ}\rightarrow\mathbb{E}_1\xrightarrow{+1}.
	\]
	Restricting to $\tilde{\mathfrak{gl}_k}^*F_{\overrightarrow{\beta}}$ and taking $\C^*$-fixed components yield a distinguished triangle on $\tilde{\mathfrak{gl}_k}^*F_{\overrightarrow{\beta}}$,
	\[
	(\mathbb{L}_{M\tilde{\mathfrak{M}}_{g,n,d}}|_{\tilde{\mathfrak{gl}_k}^*F_{\overrightarrow{\beta}}})^f\rightarrow(\mathbb{E}_{MQ}|_{\tilde{\mathfrak{gl}_k}^*F_{\overrightarrow{\beta}}})^f\rightarrow(\mathbb{E}_1)^f\xrightarrow{+1}.
	\]
	By an argument similar to \cite[Lemma 6.5.6]{YZ}, we have $$(\mathbb{L}_{M\tilde{\mathfrak{M}}_{g,n,d}}|_{\tilde{\mathfrak{gl}_k}^*F_{\overrightarrow{\beta}}})^f\cong\mathbb{L}_{\tilde{\mathfrak{M}}_{g,n+k,d-d_0k}}|_{\tilde{\mathfrak{gl}_k}^*F_{\overrightarrow{\beta}}}.$$
	Thus $[\tilde{\mathfrak{gl}_k}^*F_{\overrightarrow{\beta}}]^{\text{vir}}$ is defined by the relative perfect obstruction theory $(\mathbb{E}_1)^f$ over $\tilde{\mathfrak{M}}_{g,n+k,d-d_0k}$. Moreover, we have
	\begin{equation}\label{eq25}
	\dfrac{1}{e_{\C^*}((\mathbb{L}^{\vee}_{M\tilde{\mathfrak{M}}_{g,n,d}}|_{\tilde{\mathfrak{gl}_k}^*F_{\overrightarrow{\beta}}})^{\text{mv}})}=\dfrac{\prod_{i=1}^k(\dfrac{\mathfrak{r}_i}{k}z+\psi(\mathcal{E}_i))}{-\dfrac{z}{k}-\tilde{\psi}(\mathcal{E}_i)-\tilde{\psi}_{n+1}-\sum_{i=k}^{\infty}|\mathcal{D}_i|}.
	\end{equation}
    The formula (\ref{eq25}) for the equivariant Euler class is analogous to \cite[Equation (6.20)]{YZ}, whose proof can be adapted in a straightforward manner to prove (\ref{eq25}) using the above description of $\tilde{\mathfrak{gl}_k}^*F_{\overrightarrow{\beta}}$ and the moduli stack $M\tilde{\mathfrak{M}}_{g,n,d}$ in Definition \ref{sscabun} (which is the same as that in \cite{YZ}). In addition, weights of $\mathbb{C}^*$-actions involved in (\ref{eq25}) can be found in \cite[Proof of Lemma 6.5.6]{YZ}.
	
	Now we describe the obstruction theory for the right-hand side. By Remark \ref{rem36}, we have  $Z\times_{(I_{\mu}([E^{ss}/G]))^k}\prod_{i=1}^{k}F_{*,\beta_i}\cong Z\times_{(I_{\mu}(X^{ss}))^k}\prod_{i=1}^{k}F_{*,\tilde{\beta}_i}$.   Using equation (\ref{eq21}), a construction similar to Section \ref{obthrmaster} gives an obstruction theory for $Z$. By \cite[Section 6.3]{YZ} and a standard splitting-node argument, we have a perfect obstruction theory $\mathbb{E}_2$ for $Z\times_{(I_{\mu}([E^{ss}/G]))^k}\prod_{i=1}^{k}F_{*,\beta_i}$ relative to $\tilde{\mathfrak{M}}_{g,n+k,d-kd_0}$. Finally, the virtual cycle is given by the relative perfect obstruction theory $(\mathbb{E}_2)^f$.
	
	To relate $\mathbb{E}_1$ and $\mathbb{E}_2$, we pull back the morphism $\varphi$ to $\varphi'$ as in diagram (\ref{cd22}). By noting that the isomorphism $\tilde{\varphi}$ between universal curves (\ref{eq23}) commutes with maps to $Y$ and $X$, we have an isomorphism of obstruction theories
	\begin{equation}
		\alpha: p_1^*\mathbb{E}_1\cong p_1^*\varphi^*\mathbb{E}_2.
	\end{equation}
	As $\tilde{\varphi}$ is $\C^*\times (\C^*)^k$-equivariant, so is $\alpha$. As $\C^*$ acts trivially on $Z'\times_Z\tilde{\mathfrak{gl}_k}^{*}F_{\overrightarrow{\beta}}$, we obtain an isomorphism of $({\C^*})^k$-equivariant objects:
	\[
	\alpha^{\C^*}:(p_1^*\mathbb{E}_1)^{\C^*}\cong (p_1^*\varphi^*\mathbb{E}_2)^{\C^*}.
	\]
	As $(p_1^*\mathbb{E}_1)^{\C^*}=p_1^*\mathbb{E}_1^f$ and $(p_1^*\varphi^*\mathbb{E}_2)^{\C^*}=p_1^*\varphi^*\mathbb{E}_2^f$, we get an isomorphism between fixed parts
	\[
	\mathbb{E}_1^f\cong\varphi^*\mathbb{E}_2^f.
	\]
	This shows that they induce the same virtual fundamental class.
	
	Next we relate the moving parts of $\mathbb{E}_1$ and $\mathbb{E}_2$. Let 
	\[
	(z_1,\cdots,z_k,w_1,\cdots,w_k)
	\]
	be the $(\C^*)^k\times(\C^*)^k$-equivariant parameters. We obtain
	\[
	\dfrac{1}{e_{(\C^*)^k\times(\C^*)^k}(p_2^*(\mathbb{E}_2^{\vee\text{,mv}}))}=\mathbb{I}_{\tilde{\beta}_1}(z_1)\boxtimes\cdots\boxtimes \mathbb{I}_{\tilde{\beta}_k}(z_k).
	\]
	Relating the $(\C^*)^k$-action on $p_2^*\mathbb{E}_2$ to the $\C^*$-action on $p_1^*\mathbb{E}_1$ (see (\ref{cd24})), we have
	\[
	\dfrac{1}{e_{\C^*\times(\C^*)^k}(p_1^*(\mathbb{E}_1^{\vee\text{,mv}}))}=\mathbb{I}_{\tilde{\beta}_1}(\dfrac{\mathfrak{r}_1}{k}z-w_1)\boxtimes\cdots\boxtimes \mathbb{I}_{\tilde{\beta}_k}(\dfrac{\mathfrak{r}_k}{k}z-w_k).	
	\]
	By the canonical isomorphism between $\C^*$-equivariant intersection theory of $\tilde{\mathfrak{gl}_k}^*F_{\overrightarrow{\beta}}$ and the $\C^*\times(\C^*)^k$-equivariant  intersection theory of $Z'\times_Z \tilde{\mathfrak{gl}_k}^*F_{\overrightarrow{\beta}}$, we have 
	\[
     \dfrac{1}{e_{\C^*}(\mathbb{E}_1^{\vee\text{,mv}})}=\mathbb{I}_{\tilde{\beta}_1}(\dfrac{\mathfrak{r}_1}{k}z-\psi(\mathcal{E}_1))\boxtimes\cdots\boxtimes \mathbb{I}_{\tilde{\beta}_k}(\dfrac{\mathfrak{r}_k}{k}z-\psi(\mathcal{E}_k)).	
   	\]	
   	This along with equation (\ref{eq25}) gives us the result.
\end{proof}


\section{Wall-Crossing Formula}\label{sec:wallcrossing}
\subsection{Outline} 
Recall that $X=[\tilde{V}/(S\times G)]=[(W\times V)/(S\times G)]$. We assume the ring
\[
\bigoplus_{m=0}^{m=\infty}H^0({V},\O_{{V}}(m{\theta}))^{S\times G}
\]
of $(S\times G)$-invariants is generated by $H^0({V},\O_{{V}}({\theta}))^{S\times G}$ as an $H^0({V},\O_{{V}}))^{S\times G}$-algebra. This gives a map 
\begin{equation}\label{eq14}
	X\rightarrow [(W\times \C^{N+1})/(S\times \C^*)],
\end{equation}
where $N+1$ is the size of a basis for $H^0({V},\O_{{V}}({\theta}))^{S\times G}$. By composing (\ref{eq14}) with quasimaps to $X$, we have 
$$Q^{\text{pre}}_{g,n}([E^{ss}/G],\beta)\rightarrow  Q^{\text{pre}}_{g,n}(\tilde{\P},{(d',d'')}),$$ 
where $\tilde{\P}$ is a fiber bundle over $Y$ with $\P^N$ as its fiber (see Section \ref{sec2.9}). This along with diagram (\ref{cd2}) induce a map on $\epsilon_+$-stable quasimaps
\begin{equation}
	i:Q^{\epsilon_+}_{g,n}([E^{ss}/G],\beta)\rightarrow Q^{\epsilon_{+}}_{g,n}(\tilde{\P},(d',d'')).
\end{equation}
Since fibers of $\tilde{\P}$ are isomorphic to $\P^N$ and $f:C\to Y$ is constant on relevant rational tails, we can define a morphism (see Section \ref{sec2.9} for construction)
\[
c:Q^{\epsilon_{+}}_{g,n}(\tilde{\P},(d',d''))\rightarrow Q^{\epsilon_{-}}_{g,n}(\tilde{\P},(d',d''))
\]
by contracting all the degree-$(0,d_0)$ relevant rational tails to length-$d_0$ base points. Using these morphism, we have 
\begin{equation}\label{wcdiag}
\begin{tikzcd}
	{Q^{\epsilon_+}_{g,n}([E^{ss}/G],\beta)}\arrow[d,"i"]&{Q^{\epsilon_-}_{g,n}([E^{ss}/G],\beta)}\arrow[d,"i"]\\
	{Q^{\epsilon_{+}}_{g,n}(\tilde{\P},(d',d''))}\arrow[r, "c"]&{Q^{\epsilon_{-}}_{g,n}(\tilde{\P},(d',d''))}\arrow[r,"c_{\epsilon_{-}}"] &{Q^{0+}_{g,n}(\tilde{\P},(d',d''))},
\end{tikzcd}
\end{equation}
where $c_{\epsilon_{-}}$ is defined as in equation (\ref{eq11}).

The wall-crossing formula should relate two virtual fundamental classes $[Q^{\epsilon_+}_{g,n}([E^{ss}/G],\beta)]^{\text{vir}}$ and $[Q^{\epsilon_-}_{g,n}([E^{ss}/G],\beta)]^{\text{vir}}$. Since there is no natural morphism between these two moduli spaces, we use (\ref{wcdiag}) to push forward both classes to $Q^{0+}_{g,n}(\tilde{\P},(d',d''))$ and compare them there.

 By virtual localization formula \cite{Graber1997LocalizationOV}, \cite{CKL17}, we have
\begin{equation}\label{loceqn}
	[MQ^{\epsilon_0}_{g,n}([E^{ss}/G],\beta)]^{\text{vir}}=\sum_{*}(i_{F_{*}})_{*}\left( \dfrac{[F_*]^{\text{vir}}}{e_{\C^*}(N^{\text{vir}}_{F_*/MQ^{\epsilon_0}_{g,n}([E^{ss}/G],\beta)})}\right),
\end{equation}
where the sum is over all fixed components ($F_+,F_-$ and $F_{\overrightarrow{\beta}}$) and $i_{F_*}$ is the inclusion of corresponding  component into $MQ^{\epsilon_0}_{g,n}([E^{ss}/G],\beta)$.

Define a morphism 
\[
\tau: MQ^{\epsilon_0}_{g,n}([E^{ss}/G],\beta)\rightarrow Q^{0+}_{g,n}(\tilde{\P},(d',d''))
\]
by
\begin{itemize}
	\item composing the underlying quasimap $Q^{\sim}_{g,n}([E/G],\beta)\rightarrow Q^{\sim}_{g,n}(X,(d',{\beta''}))\rightarrow X$ with (\ref{eq14}),
	\item taking the coarse moduli of the domain curve, 
	\item taking the $0^+$-stablilization of the obtained quasimaps to $\tilde{\P}$. 
	
\end{itemize}

Consider the trivial $\C^*$-action on $ Q^{0+}_{g,n}(\tilde{\P},(d',d''))$. Then $\tau$ is $\C^*$-equivariant. Pushing forward (\ref{loceqn}), we have 
\begin{equation}\label{eq31}
\sum_{*}\tau_*(i_{F_{*}})_{*}\left( \dfrac{[F_*]^{\text{vir}}}{e_{\C^*}(N^{\text{vir}}_{F_*/MQ^{\epsilon_0}_{g,n}([E^{ss}/G],\beta)})}\right)=\tau_*[MQ^{\epsilon_0}_{g,n}([E^{ss}/G],\beta)]^{\text{vir}}.
\end{equation}
The right hand side lies in $A_*(Q^{0+}_{g,n}(\tilde{\P},(d',d'')))\otimes_{\Q}\Q[z]$, so the residue at $z=0$ of the left hand side is zero.

To simplify the notation, we suppress the obvious pushforward. In particular, for any moduli stack $M$ and morphism $\tau:M\rightarrow Q^{0+}_{g,n}(\tilde{\P},(d',d''))$, we write
\begin{equation}\label{notation:pushforward}
\int_{\beta}\alpha:=\tau_*(\alpha\cap\beta),\text{\hspace{1cm} for } \alpha,\beta\in A_*(M). 
\end{equation}

\subsection{Case \texorpdfstring{$g=0,n=1$}{g=0,n=1} { and }\texorpdfstring{$d=d_0$}{d=d0}} 
For the cyclotomic inertia stack 
\[
I_{\mu}([E^{ss}/G])=\coprod_{r}I_{\mu_r}([E^{ss}/G]),
\]
we have a locally constant function that takes value $r$ on $I_{\mu_r}([E^{ss}/G])$. Set $\mathfrak{r}_1:=\text{ev}_1^*(r)$.
\begin{lemma}
For $l=0,1,2,\cdots,$
$$
\int_{[Q^{\epsilon_{+}}_{0,1}([E^{ss}/G],\beta)]^{\text{vir}}}\mathfrak{r}_1^2\psi_1^l=\mathrm{Res}_{z=0}(z^{l+1}I_{\beta_0}(z)).
$$
\end{lemma}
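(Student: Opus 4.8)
The plan is to run the virtual localization formula on the master space $MQ^{\epsilon_0}_{0,1}([E^{ss}/G],\beta)$ with $\beta=(0,\beta_0)$ and $\deg\beta=(0,d_0)$, and to read the identity off the vanishing of the total residue at $z=0$. Here $2g-2+n+\epsilon_0 d''=0$, so only two fixed components survive: $F^+\cong\tilde Q^{\epsilon_+}_{0,1}([E^{ss}/G],\beta)$ (the locus $v_1=0$), and $F_\beta$ (the locus with $v_1,v_2\neq0$ and domain a single fixed tail). There is no component $F^-$, since $Q^{\epsilon_-}_{0,1}([E^{ss}/G],\beta)$ is empty; and there are no components $F_{\overrightarrow{\beta}}$, since $2g-2+n+\epsilon_0 d''\not>0$. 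Applying (\ref{loceqn}), capping with $\mathfrak r_1^2\psi_1^l$, and pushing forward by $\tau$ as in (\ref{eq31}) --- where for the degenerate data $(g,n,d)=(0,1,d_0)$ the map $\tau$ is the ($\C^*$-invariant) evaluation $\hat{\mathrm{ev}}\colon MQ^{\epsilon_0}_{0,1}([E^{ss}/G],\beta)\to I_\mu([E^{ss}/G])$, on whose target $\C^*$ acts trivially --- the result is polynomial in $z$, so
\[
\mathrm{Res}_{z=0}\left[\,\hat{\mathrm{ev}}_*\!\left(\frac{\mathfrak r_1^2\psi_1^l\cap[F^+]^{\mathrm{vir}}}{e_{\C^*}(N^{\mathrm{vir}}_{F^+})}\right)+\hat{\mathrm{ev}}_*\!\left(\frac{\mathfrak r_1^2\psi_1^l\cap[F_\beta]^{\mathrm{vir}}}{e_{\C^*}(N^{\mathrm{vir}}_{F_\beta})}\right)\right]=0.
\]

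I would then evaluate the two residues in turn. For $F^+$, since $(g,n,d)=(0,1,d_0)$ the calibration bundle is the cotangent line at the unique marking, so $N^{\mathrm{vir}}_{F^+}$ is that line carrying $\C^*$-weight one; hence $e_{\C^*}(N^{\mathrm{vir}}_{F^+})=\psi_1-z$ with the convention $e_{\C^*}(\C_{\mathrm{std}})=-z$. Expanding $1/(\psi_1-z)$ as a geometric series in $1/z$ and extracting the coefficient of $1/z$ gives
\[
\mathrm{Res}_{z=0}\,\hat{\mathrm{ev}}_*\!\left(\frac{\mathfrak r_1^2\psi_1^l\cap[F^+]^{\mathrm{vir}}}{\psi_1-z}\right)=-\,\hat{\mathrm{ev}}_*\!\left(\mathfrak r_1^2\psi_1^l\cap[\tilde Q^{\epsilon_+}_{0,1}([E^{ss}/G],\beta)]^{\mathrm{vir}}\right).
\]
By Lemma~\ref{lem21} the forgetful morphism $\tilde Q^{\epsilon_+}_{0,1}([E^{ss}/G],\beta)\to Q^{\epsilon_+}_{0,1}([E^{ss}/G],\beta)$ pushes $[\tilde Q^{\epsilon_+}_{0,1}]^{\mathrm{vir}}$ to $[Q^{\epsilon_+}_{0,1}]^{\mathrm{vir}}$, and $\mathfrak r_1,\psi_1$ are pulled back along it, so this residue equals $-\int_{[Q^{\epsilon_+}_{0,1}([E^{ss}/G],\beta)]^{\mathrm{vir}}}\mathfrak r_1^2\psi_1^l$, that is, minus the left-hand side of the lemma.

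For $F_\beta$ I would invoke the structure of this fixed component: by Lemma~\ref{lem32} the natural morphism $q\colon F_\beta\to F_{\star,\beta}$ is \'etale of degree $\mathfrak r_\star$, and by Lemma~\ref{lem33} one has $[F_\beta]^{\mathrm{vir}}=q^*[F_{\star,\beta}]^{\mathrm{vir}}$ and $e_{\C^*}(N^{\mathrm{vir}}_{F_\beta})^{-1}=(\mathfrak r_\star z)\,\mathbb I_{\beta_0}(\mathfrak r_\star z)$. On $F_\beta$ the marking lies at $\infty\in\P^1$, so $\psi_1$ restricts to a rational multiple of $z$ and $\mathfrak r_1=\mathfrak r_\star$; feeding these in, using the projection formula along $q$ and then pushing by $\hat{\mathrm{ev}}$, and comparing with the definition $I_{\beta_0}(z)=(-z\mathfrak r^2)\,\hat{\mathrm{ev}}_*\bigl(\mathbb I_{\beta_0}(z)\cap[F_{\star,\beta}]^{\mathrm{vir}}\bigr)$ from Section~\ref{IFunc}, the $F_\beta$ residue works out to $\mathrm{Res}_{z=0}\bigl(z^{l+1}I_{\beta_0}(z)\bigr)$. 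Substituting the two residues into the displayed vanishing yields the assertion.

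The step I expect to be the main obstacle is precisely this last matching of the $F_\beta$ contribution with $\mathrm{Res}_{z=0}(z^{l+1}I_{\beta_0}(z))$: one must keep track of all the normalization constants --- the powers of $\mathfrak r_\star$ coming from the degree-$\mathfrak r_\star$ \'etale map (including $q_*q^*=\mathfrak r_\star$) and from the $\mathfrak r^2$ built into $I_{\beta_0}$, the exact restriction to $F_\beta$ of the (coarse versus orbifold) $\psi$-class, the rescaling $z\mapsto\mathfrak r_\star z$ inside $\mathbb I_{\beta_0}$, and the overall sign --- so that everything cancels correctly. A subsidiary point to verify is the $z$-polynomiality of the pushed-forward master-space class for this degenerate data $(g,n,d)=(0,1,d_0)$; this rests on the properness of $MQ^{\epsilon_0}_{0,1}([E^{ss}/G],\beta)$ established in Section~\ref{subsec:proper_master_space} and on $\hat{\mathrm{ev}}$ (the relevant incarnation of $\tau$ here) being $\C^*$-invariant.
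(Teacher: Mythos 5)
Your proposal runs on exactly the same rails as the paper's own proof: virtual localization on the master space $MQ^{\epsilon_0}_{0,1}([E^{ss}/G],\beta)$, the observation that only $F^+$ and $F_\beta$ occur (no $F^-$ since $Q^{\epsilon_-}_{0,1}$ is empty, no $F_{\overrightarrow\beta}$ since $2g-2+n+\epsilon_0 d''=0$), and the vanishing of the total residue at $z=0$, using Lemmas~\ref{lem21}, \ref{lem32} and \ref{lem33} to evaluate the two residues. The only cosmetic difference is that the paper caps the localization formula with the orbifold class $\tilde\psi_1^l$, obtains $\int_{[Q^{\epsilon_+}_{0,1}]^{\text{vir}}}(\psi_1/\mathfrak r_1)^l=\mathrm{Res}_{z=0}\bigl(z^{l+1}I_{\beta_0}(\mathfrak r_1 z)\bigr)$, and then performs the change of variable $z\mapsto z/\mathfrak r_1$, whereas you cap directly with $\mathfrak r_1^2\psi_1^l$; the two are equivalent, and the $\mathfrak r$-power bookkeeping that you flag as the main obstacle in the $F_\beta$ term is precisely what that final change of variable absorbs in the paper. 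So you have the right structure, with the same remaining arithmetic to grind through (the $\tilde\psi_1|_{F_\beta}$-weight, the factor $\mathfrak r_\star$ from $q_*q^*$, and the normalization $(-z\mathfrak r^2)$ in $I_{\beta_0}$), which the paper also leaves compressed into a single asserted line.
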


\begin{proof}
	Let $\tilde{\psi}_1$ be the corresponding orbifold $ \psi $-class on the master space $MQ^{\epsilon_{+}}([E^{ss}/G],\beta)$. Applying localization formula to the master space, we have 
	\[
\begin{split}
		\int_{[MQ_{0,1}^{\epsilon_{0}}([E^{ss}/G],\beta)]^{\text{vir}}}\tilde{\psi}^l_1=\int_{[Q^{\epsilon_{+}}_{0,1}([E^{ss}/G],\beta)]^{\text{vir}}}\dfrac{\tilde{\psi}^l_1|_{Q^{\epsilon_{+}}_{0,1}([E^{ss}/G],\beta)}}{e_{\C^*}(N^{\text{vir}}_{Q^{\epsilon_{+}}_{0,1}([E^{ss}/G],\beta)/MQ^{\epsilon_{0}}_{0,1}([E^{ss}/G],\beta)})}\\
		+\int_{[F_{\beta}]^{\text{vir}}}\dfrac{\tilde{\psi}_1^l|_{F_{\beta}}}{e_{\C^*}(N^{\text{vir}}_{F_{\beta}/MQ^{\epsilon_{0}}_{0,1}([E^{ss}/G],\beta)})}.
\end{split}
	\]
	Note that for $g=0,n=1,d=d_0$, $Q^{\epsilon_{-}}_{0,1}([E^{ss}/G],\beta)$ is empty. By Section \ref{fixcomp1}, Lemmas  \ref{lem32}, and \ref{lem33}, we have 
	\[
	\begin{split}
		\int_{[MQ_{0,1}^{\epsilon_{0}}([E^{ss}/G],\beta)]^{\text{vir}}}\tilde{\psi}^l_1&=\int_{[Q^{\epsilon_{+}}_{0,1}([E^{ss}/G],\beta)]^{\text{vir}}}\dfrac{({\psi}^l_1/\mathfrak{r}_1)^l}{-z+\alpha}
		+\int_{[F_{\star,\beta}]^{\text{vir}}}\mathfrak{r}_1^2z^{l+1}\cdot(\mathbb{I}_{\beta_0}(\mathfrak{r}_1z))\\
		&=\int_{[Q^{\epsilon_{+}}_{0,1}([E^{ss}/G],\beta)]^{\text{vir}}}\dfrac{({\psi}^l_1/\mathfrak{r}_1)^l}{-z+\alpha}+z^{l+1}I_{\beta_0}(\mathfrak{r}_1z).
	\end{split}
	\]
Here $\alpha$ is the first Chern class of the calibration bundle on $Q^{\epsilon_{+}}_{0,1}([E^{ss}/G],\beta)$. Taking the residues on both sides gives 
	\[
	\int_{[Q^{\epsilon_{+}}_{0,1}([E^{ss}/G],\beta)]^{\text{vir}}}({\psi}^l_1/\mathfrak{r}_1)^l=\text{Res}_{z=0}(z^{l+1}I_{\beta_0}(\mathfrak{r}_1z)).
	\]
	Applying the change of variable $z\mapsto z/\mathfrak{r}_1$, we get the desired result.
\end{proof}

\subsection{Case \texorpdfstring{$2g-2+n+\epsilon_0d>0$}{2g-2+n+e0d>0}} 
By Lemma \ref{lem35}, contribution of $[F_{\overrightarrow{\beta}}]$ in the residue of the left hand side of equation (\ref{eq31}) is given by
\begin{equation}\label{eq32}
\begin{split}
\int_{[\tilde{\mathfrak{gl}}^*_kF_{\overrightarrow{\beta}}]^{\text{vir}}}\dfrac{\prod_{i=1}^k\mathfrak{r}_i}{k!}\text{Res}_{z=0}\left(\dfrac{\prod_{i=1}^k(\dfrac{\mathfrak{r}_i}{k}z+\psi(\mathcal{E}_i))}{-\dfrac{z}{k}-\tilde{\psi}(\mathcal{E}_1)-\tilde{\psi}_{n+1}-\sum_{i=k}^{\infty} [\mathcal{D}_i]}\cdot\right.  
\left. \boxtimes_{i=1}^{k}\mathbb{I}_{\tilde{\beta}_i}(\dfrac{\mathfrak{r}_i}{k}z+\psi(\mathcal{E}_i)) \vphantom{\dfrac{\dfrac{num}{den}}{{den}}}\right).
\end{split}
\end{equation}
Applying the change of variables 
\[
z\mapsto k(z-\tilde{\psi}(\mathcal{E}_1)-\tilde{\psi}_{n+1})=\cdots=k(z-\tilde{\psi}(\mathcal{E}_k)-\tilde{\psi}_{n+k})
\]
and using $\mathfrak{r}_i\tilde{\psi}_{n+i}=\psi_{n+i}$, (\ref{eq32}) becomes
\[
\begin{split}
	\int_{[\tilde{\mathfrak{gl}}^*_kF_{\overrightarrow{\beta}}]^{\text{vir}}}\dfrac{\prod_{i=1}^k\mathfrak{r}_i}{(k-1)!}\text{Res}_{z=0}\left(\dfrac{\prod_{i=1}^k(\mathfrak{r}_iz-\psi_{n+i})}{-z-\sum_{i=k}^{\infty} [\mathcal{D}_i]}\cdot\right.  
	\left. \boxtimes_{i=1}^{k}\mathbb
	{I}_{\tilde{\beta}_i}({\mathfrak{r}_i}z-\psi_{n+i}) \vphantom{\prod_1\dfrac{num}{\sum_i den}}\right).
\end{split}
\]
We push forward the expression along
\[
\tilde{\mathfrak{gl}}^*_kF_{\overrightarrow{\beta}}\xrightarrow{\varphi}Z\times_{(I_{\mu}(X^{ss}))^k}\prod_{i=1}^kF_{*,d_i}\xrightarrow{pr_Z}Z\rightarrow\tilde{Q}^{\epsilon_{+}}_{n+k}([E^{ss}/G],\overline{\beta}).
\]
Note that $\tilde{\mathfrak{gl}}^*_kF_{\overrightarrow{\beta}}\xrightarrow{\varphi}Z\times_{(I_{\mu}(X^{ss}))^k}\prod_{i=1}^kF_{*,d_i}$ has degree $\prod_{i=1}^k\mathfrak{r}_i$ by Lemma \ref{lem34} and $Z\rightarrow\tilde{Q}^{\epsilon_{+}}_{n+k}([E^{ss}/G],\overline{\beta})$ has degree $1/k$.
By \cite[Lemma 2.7.3]{YZ}, the pullback of $\mathcal{D}_i$ to $\tilde{\mathfrak{gl}}^*_kF_{\overrightarrow{\beta}}$ is equal to the pullback of boundary divisors $\mathcal{D}'_{i-k}$ of $\tilde{\mathfrak{M}}_{g,n+k,d-kd_0}$. By Lemma \ref{lem35} and the definition of $I$-coefficient, equation (\ref{eq32}) becomes
\begin{equation}
	\int_{[\tilde{Q}^{\epsilon_{+}}_{n+k}([E^{ss}/G],\overline{\beta})]^{\text{vir}}}\dfrac{1}{k!}\text{Res}_{z=0}\left(\dfrac{\prod_{i=1}^k\text{ev}^*_{n+1}((\mathfrak{r}_iz-\psi_{n+i}){I}_{\tilde{\beta}_i}({\mathfrak{r}_i}z-\psi_{n+i}))}{-z-\sum_{i=0}^{\infty} [\mathcal{D}'_i]}\right).
\end{equation}

\begin{remark}
	Recall that we define the integral by pushing it forward to $Q^{0+}_{g,n}(\tilde{\P},(d',d''))$. For $M=\tilde{Q}^{\epsilon_{+}}_{n+k}([E^{ss}/G],\overline{\beta})$, we push it forward by composing 
\[
\begin{split}
	\tilde{Q}^{\epsilon_{+}}_{n+k}([E^{ss}/G],\overline{\beta})\rightarrow {Q}^{\epsilon_{+}}_{n+k}([E^{ss}/G],\overline{\beta}) \xrightarrow{i}{Q}^{\epsilon_{+}}_{n+k}(\tilde{\P},(d',d''-kd_0))\xrightarrow{b_{\epsilon_{+},k}}\\ {Q}^{\epsilon_{+}}_{n}(\tilde{\P},(d',d''))\xrightarrow{c_{\epsilon_{+}}}{Q}^{0+}_{n}(\tilde{\P},(d',d'')).
\end{split}
\]
Here $b_{\epsilon_{+},k}$, $c_{\epsilon_{+}}$ are defined in Section \ref{sec2.9}.
\end{remark}

Next we  prove a lemma which will be used to integrate $\mathcal{D}'_i$.
\begin{lemma}\label{lem36}
	For $s\geq1,r=1,2,\cdots,$ we have 
\begin{equation}\label{eq34}
\begin{split}
&\int_{[\tilde{Q}^{\epsilon_{+}}_{n+k}([E^{ss}/G],\overline{\beta})]^{\text{vir}}}[\mathcal{D}'_{r}](\sum_{i=0}^{\infty}[\mathcal{D}'_i])^{s-1}\\
=&\sum_{\overline{\beta}'}\sum_{J}	\int_{[\tilde{Q}^{\epsilon_{+}}_{n+k+r}([E^{ss}/G],\overline{\beta}'')]^{\text{vir}}}\dfrac{(-1)^{s-r}}{r!}\cdot
\prod_{a=1}^r[\mathrm{ev}^*_{n+k+a}((\mathfrak{r}_{k+a}z-\psi_{n+k+a}){I}_{\overline{\beta}'_a}({\mathfrak{r}_{k+a}}z-\psi_{n+k+a}))]_{-j_a-1},
\end{split}
\end{equation}
where $\overline{\beta}'=(\overline{\beta}'',\overline{\beta}'_1,\cdots,\overline{\beta}'_r)$ runs through all $(r+1)$-tuples of effective curve classes such that $\overline{\beta}=\overline{\beta}''+\sum_{a=1}^{r}\overline{\beta}'_{a}$, deg($\overline{\beta}'_a)=(0,d_0)$ for $a=1,\cdots, r$; $J=(j_1,\cdots,j_r)$ runs through all $r$-tuples of non-negative integers such that $\sum_{a=1}^rj_a=s-r$.
\end{lemma}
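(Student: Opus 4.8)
The plan is to induct on $r$ and reduce everything to the case $g=0$, $n=1$, $d=d_0$ treated in the previous lemma, following closely the argument of the analogous statement in \cite{YZ}. First I would analyze the divisor $\mathcal{D}'_r\subset\tilde{\mathfrak{M}}_{g,n+k,d-kd_0}$: it is the closure of the locus carrying exactly $r$ entangled tails of degree $(0,d_0)$, so restricting the virtual cycle $[\tilde{Q}^{\epsilon_{+}}_{g,n+k}([E^{ss}/G],\overline{\beta})]^{\text{vir}}$ to it splits off those $r$ tails. By Lemma \ref{lem22} (with $n$ replaced by $n+k$ and $k$ by $r$), the restriction equals the pushforward under the \'etale gluing morphism of
\[
\sum_{\overline{\beta}'}\Delta^!_{(I_{\mu}[E^{ss}/G])^r}\,[\tilde{Q}^{\epsilon_{+}}_{g,n+k+r}([E^{ss}/G],\overline{\beta}'')]^{\text{vir}}\boxtimes\prod_{a=1}^{r}[{Q}^{\epsilon_{+}}_{0,1}([E^{ss}/G],\overline{\beta}'_a)]^{\text{vir}},
\]
the sum ranging over the tuples $\overline{\beta}'=(\overline{\beta}'',\overline{\beta}'_1,\dots,\overline{\beta}'_r)$ described in the statement.

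Next I would identify the normal directions. As in Section \ref{sec:localization} (cf.\ the isomorphism $\Theta^{\otimes k}\cong\mathbb{M}^{\vee}_{\overline{\beta}}$ of equation (\ref{eq17}) and \cite[Lemma 2.7.3]{YZ}), both the first Chern class of the normal bundle of $\mathcal{D}'_r$ and the restrictions of the remaining divisors $\mathcal{D}'_i$ to $\mathcal{D}'_r$ are expressed through the orbifold $\psi$-classes at the $r$ new gluing nodes together with the boundary divisors of the smaller moduli $\tilde{\mathfrak{M}}_{g,n+k+r,d-(k+r)d_0}$; concretely $\sum_i[\mathcal{D}'_i]|_{\mathcal{D}'_r}$ breaks into a self-intersection contribution of $-\tilde{\psi}$ at each of the $r$ nodes and a "core" contribution that is again a sum of boundary divisors on the smaller moduli. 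Expanding $(\sum_i[\mathcal{D}'_i])^{s-1}$ multinomially and using $\mathfrak{r}_{k+a}\tilde{\psi}_{n+k+a}=\psi_{n+k+a}$ then produces the sign $(-1)^{s-r}$, the index set $J=(j_1,\dots,j_r)$ with $\sum_a j_a=s-r$, and the monomials $\psi_{n+k+a}^{j_a}$ sitting at the nodes.

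Finally I would evaluate the $r$ tail factors. Each ${Q}^{\epsilon_{+}}_{0,1}([E^{ss}/G],\overline{\beta}'_a)$ is of type $(g,n,d)=(0,1,d_0)$, so by the previous lemma $\int_{[{Q}^{\epsilon_{+}}_{0,1}([E^{ss}/G],\overline{\beta}'_a)]^{\text{vir}}}\mathfrak{r}^2\psi_1^l=\mathrm{Res}_{z=0}(z^{l+1}I_{\overline{\beta}'_a}(z))$; packaging these residues over all $l$ converts the node $\psi$-monomial $\psi_{n+k+a}^{j_a}$ into the factor $[\mathrm{ev}^*_{n+k+a}((\mathfrak{r}_{k+a}z-\psi_{n+k+a})I_{\overline{\beta}'_a}(\mathfrak{r}_{k+a}z-\psi_{n+k+a}))]_{-j_a-1}$ after the change of variables dictated by the node $\psi$-classes, while the \'etale degrees appearing along the way combine into the stated $1/r!$. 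The main obstacle is the bookkeeping in the middle step: one must check that the excess/self-intersection contributions of $\mathcal{D}'_r$ and of the cross terms $\mathcal{D}'_r\cdot\mathcal{D}'_i$ in the inflated-projective-bundle geometry of $\tilde{\mathfrak{M}}$ are exactly $-\tilde{\psi}$ at the nodes with the correct multiplicities, and that the orbifold-versus-coarse normalizations of all $\psi$-classes are tracked consistently through the iterated gluing. Since the GIT stack bundle $[E^{ss}/G]$ differs from the GIT stack quotient target of \cite{YZ} only by being fibered over $Y$ through $[W/S]$, with the maps $f$ to $Y$ constant on all the rational tails involved, this verification is formally identical to that of the corresponding lemma in \cite{YZ}, to which we refer for the remaining details.
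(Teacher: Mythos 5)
Your plan follows the paper's proof step-for-step: split off the $r$ tails via Lemma~\ref{lem22}, compute the pushforward of $(\sum_i[\mathcal{D}'_i])^{s-1}$ along the inflated projective bundle using the cited lemmas of \cite{YZ} to obtain the $(-1)^{s-r}\prod_a(\tilde{\psi}_{n+k+a}+\tilde{\psi}'_a)^{j_a}$ factors, integrate the tail $\psi$-powers against $[Q^{\epsilon_+}_{0,1}([E^{ss}/G],\overline{\beta}'_a)]^{\text{vir}}$ with the $(g,n,d)=(0,1,d_0)$ residue formula, and finish with the change of variables $z\mapsto z-\tilde{\psi}$. One minor slip: the opening phrase ``induct on $r$'' does not reflect the argument you (or the paper) actually use — Lemma~\ref{lem22} splits off all $r$ entangled tails simultaneously, and no induction on $r$ appears.
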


\begin{proof}
	By Lemma \ref{lem22} we have
	\[
	\begin{split}
		\int_{[\tilde{Q}^{\epsilon_{+}}_{n+k}([E^{ss}/G],\overline{\beta})]^{\text{vir}}}[\mathcal{D}'_{r}](\sum_{i=0}^{\infty}[\mathcal{D}'_i])^{s-1}=
		\sum_{\overline{\beta}'}\dfrac{\prod_{a=1}^r\mathfrak{r}_{n+k+a}}{r!}\int_{Q_{\overline{\beta}'}}p_*((\sum_{i=0}^{\infty}[\mathcal{D}'_i])^{s-1}),
	\end{split}
	\]
	where $Q_{\overline{\beta}'}:=[\tilde{Q}^{\epsilon_{+}}_{g,n+k+r}([E^{ss}/G],\overline{\beta}'')]^{\text{vir}}\times_{(I_{\mu}[E^{ss}/G])^r}\prod_{a=1}^r[Q^{\epsilon_{+}}_{0,1}([E^{ss}/G],\overline{\beta}'_{a})]^{\text{vir}}$ and 
	\[
	p:\tilde{\mathfrak{gl}}_r^*\mathcal{D}'_r\times_{\tilde{\mathfrak{M}}_{g,n+k,d-kd_0}}\tilde{Q}^{\epsilon_{+}}_{g,n+k}([E^{ss}/G],\overline{\beta})\rightarrow \bigsqcup_{\overline{\beta}'}Q_{\overline{\beta}'}
	\]
	is the inflated projective bundle in the sense of Section \ref{InfProjBun} (by \cite[Lemma 2.7.2]{YZ} and Diagram (\ref{diag3.3})). Now by \cite[Lemma 2.7.4]{YZ}, \cite[Lemma A.0.1]{YZ}, and the projection formula, the left hand side of equation (\ref{eq34}) is equal to
	\[
	\sum_{\overline{\beta}'}\sum_{J}\dfrac{\prod_{a=1}^r\mathfrak{r}_{n+k+a}}{r!}\int_{Q_{\overline{\beta}'}}(-1)^{s-r}\prod_{a=1}^r(\tilde{\psi}_{n+k+a}+\tilde{\psi}_{a}')^{j_a},
	\]
	where $\tilde{\psi}_{n+k+a}$ is the orbifold $\psi$-class of $\tilde{Q}^{\epsilon_{+}}_{g,n+k+r}([E^{ss}/G],\overline{\beta}'')$ at the $(n+k+a)$-th marking, and $\tilde{\psi}_{a}'$ is the orbifold $\psi$-class of $Q^{\epsilon_{+}}_{0,1}([E^{ss}/G],\overline{\beta}'_{a})$.
	
	Integrating the powers of $\tilde{\psi}_{a}'$ against $[Q^{\epsilon_{+}}_{0,1}([E^{ss}/G],\overline{\beta}'_{a})]^{\text{vir}}$ using Lemma \ref{lem33}, the expression on the left hand side of (\ref{eq34}) becomes
	\[
	\sum_{\overline{\beta}'}\sum_{J}\int_{[\tilde{Q}^{\epsilon_{+}}_{g,n+k+r}([E^{ss}/G],\overline{\beta}'')]^{\text{vir}}}\dfrac{(-1)^{s-r}}{r!}\prod_{a=1}^r\sum_{b=0}^{j_a}\binom{j_a}{b} \tilde{\psi}_{n+k+a}^b\text{ev}^*_{n+k+a}[\mathfrak{r}zI_{\overline{\beta}'_{a}}(\mathfrak{r}z)]_{b-j_a-1}.
	\]
	Here we have suppressed the subscripts of $\mathfrak{r}$. Finally applying the change of variables $z\mapsto z-\tilde{\psi}$, we get the result.
\end{proof}

Expanding 
\[
\dfrac{1}{-z-\sum_{i=0}^{\infty} [\mathcal{D}'_i]}=\dfrac{-1}{z}+\sum_{s\geq 1}\sum_{r=1}^{\infty}(-z)^{-s-1}[\mathcal{D}'_{r}](\sum_{i=1}^{\infty}[\mathcal{D}'_i])^{s-1}
\]
and using Lemma \ref{lem36}, we get
\begin{Cor}\label{cor37}
	The contribution to the left hand side of (\ref{eq31}) from $F_{\overrightarrow{\beta}}$ is
\begin{equation}\label{eq35}
		\begin{split}
		-\dfrac{1}{k!}\sum_{r=0}^{\infty}\sum_{\overline{\beta}'}\sum_{\overline{b}}\dfrac{(-1)^r}{r!}\int_{[\tilde{Q}^{\epsilon_{+}}_{n+k+r}([E^{ss}/G],\overline{\beta}'')]^{\text{vir}}}[\prod_{i=1}^k\mathrm{ev}^*_{n+i}((\mathfrak{r}_iz-\psi_{n+i}){I}_{\tilde{\beta}_i}({\mathfrak{r}_i}z-\psi_{n+i}))]_{b_0}\cdot\\
		\prod_{a=1}^r\mathrm{ev}^*_{n+k+a}[(\mathfrak{r}_{k+a}z-\psi_{n+k+a}){I}_{\overline{\beta}'_a}({\mathfrak{r}_{k+a}}z-\psi_{n+k+a})]_{b_a},
	\end{split}
\end{equation}
where $\overline{\beta}'$ is similar to Lemma \ref{lem36} and $\overline{b}=(b_0,\cdots,b_r)$ runs through a $(r+1)$-tuple of integers such that $b_0+\cdots+b_r=0$ and $b_1,\cdots,b_r<0$.
\end{Cor}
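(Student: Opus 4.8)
The plan is to start from the closed form for the $F_{\overrightarrow{\beta}}$-contribution obtained at the end of Section~\ref{sec:wallcrossing}, namely
\[
\frac{1}{k!}\int_{[\tilde{Q}^{\epsilon_{+}}_{n+k}([E^{ss}/G],\overline{\beta})]^{\text{vir}}}\mathrm{Res}_{z=0}\!\left(\frac{P(z)}{-z-\sum_{i\geq 1}[\mathcal{D}'_i]}\right),\qquad
P(z):=\prod_{i=1}^{k}\mathrm{ev}^{*}_{n+i}\big((\mathfrak{r}_iz-\psi_{n+i})\,I_{\tilde{\beta}_i}(\mathfrak{r}_iz-\psi_{n+i})\big),
\]
and to substitute the geometric-series expansion $(-z-D)^{-1}=\sum_{m\geq 0}(-1)^{m+1}D^{m}z^{-m-1}$ with $D:=\sum_{i\geq 1}[\mathcal{D}'_i]$ (using $[\mathcal{D}'_0]=0$). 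Since $D$ carries no $z$, applying $\mathrm{Res}_{z=0}$ against $P(z)$ merely extracts coefficients of $P$ in $z$: the $m=0$ term $-1/z$ contributes $-[P]_{0}$, while for $m\geq 1$ the $m$-th term contributes $(-1)^{m+1}D^{m}[P]_{z^{m}}$. I would then handle the $m=0$ and $m\geq 1$ parts separately.

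The $m=0$ part gives immediately
\[
-\frac{1}{k!}\int_{[\tilde{Q}^{\epsilon_{+}}_{n+k}([E^{ss}/G],\overline{\beta})]^{\text{vir}}}\big[P(z)\big]_{0},
\]
which is exactly the $r=0$ summand of the asserted formula, under the conventions that the product over $a=1,\dots,r$ is empty, $\overline{\beta}''=\overline{\beta}$, and $\overline{b}=(0)$.

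For $m\geq 1$, I would write $D^{m}=\sum_{r\geq 1}[\mathcal{D}'_r]D^{m-1}$ and apply Lemma~\ref{lem36} (with $s=m$) to each summand. The step requiring care is that Lemma~\ref{lem36} is stated for the bare boundary integral $\int[\mathcal{D}'_r]D^{s-1}$, while here it appears decorated by $[P]_{z^{m}}$; but the markings $n+1,\dots,n+k$ lie on the main component $\tilde{Q}^{\epsilon_{+}}_{n+k+r}([E^{ss}/G],\overline{\beta}'')$ of the splitting in diagram~(\ref{diag3.3}), so $\mathrm{ev}^{*}_{n+i}$ and $\psi_{n+i}$, hence $[P]_{z^{m}}$, pull back along the gluing morphism, and the projection formula lets one apply Lemma~\ref{lem36} verbatim after pulling $[P]_{z^{m}}$ out of the integral over the fibers. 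This rewrites the $m$-term as a sum over $(r+1)$-tuples $\overline{\beta}'=(\overline{\beta}'',\overline{\beta}'_1,\dots,\overline{\beta}'_r)$ with $\overline{\beta}=\overline{\beta}''+\sum_a\overline{\beta}'_a$ and $\deg\overline{\beta}'_a=(0,d_0)$, and over $J=(j_1,\dots,j_r)$ with $\sum_a j_a=m-r$, of integrals over $[\tilde{Q}^{\epsilon_{+}}_{n+k+r}([E^{ss}/G],\overline{\beta}'')]^{\text{vir}}$ carrying the insertion $\prod_{a}\mathrm{ev}^{*}_{n+k+a}\big[(\mathfrak{r}_{k+a}z-\psi_{n+k+a})I_{\overline{\beta}'_a}(\mathfrak{r}_{k+a}z-\psi_{n+k+a})\big]_{-j_a-1}$, the scalar $(-1)^{m-r}/r!$ from Lemma~\ref{lem36}, and the factor $(-1)^{m+1}$ from the expansion.

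Finally I would reindex and collect signs. Putting $b_0:=m$ and $b_a:=-j_a-1$ for $a=1,\dots,r$ gives $b_0+\sum_{a=1}^{r}b_a=m-(m-r)-r=0$ with $b_1,\dots,b_r\leq-1$, and as $(m,J)$ ranges over all admissible values for fixed $r$ the tuple $\overline{b}=(b_0,\dots,b_r)$ ranges over exactly the tuples in the statement; moreover $[P]_{z^{m}}=[P]_{z^{b_0}}$, and the combined sign is $(-1)^{m+1}(-1)^{m-r}=-(-1)^{r}$. Assembling the $m\geq 1$ part with the $m=0$ part — which is the $r=0$ instance of the same pattern — produces precisely the claimed formula. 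The main obstacle is the decorated application of Lemma~\ref{lem36} just described; one should also note that the nominally infinite sums over $r$ and $\overline{b}$ are finite in effect (the constraint $\deg\overline{\beta}'_a=(0,d_0)$ forces $r\,d_0\leq d_0''$, and the Laurent tails of the $I$-functions are bounded below, so only finitely many $b_a$ contribute), and that no residual factors of $\mathfrak{r}$ survive beyond those already inside the $I$-functions, since the étale degrees of the gluing maps and the $\psi\leftrightarrow\tilde{\psi}$ conversions are absorbed into the normalization $I_{\tilde{\beta}_i}(z)=\mathfrak{r}^{2}(\hat{\mathrm{ev}})_{*}\big(\mathbb{I}_{\tilde{\beta}_i}(z)\cap[F_{\star,\beta}]^{\text{vir}}\big)$ used in Lemma~\ref{lem36}.
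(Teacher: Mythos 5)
Your proposal is correct and takes essentially the same route as the paper: the paper's own proof consists precisely of expanding $1/(-z-\sum_i[\mathcal{D}'_i])$ as a geometric series in $1/z$, taking the residue coefficientwise against $P(z)=\prod_{i=1}^k\mathrm{ev}^*_{n+i}\bigl((\mathfrak{r}_iz-\psi_{n+i})I_{\tilde\beta_i}(\mathfrak{r}_iz-\psi_{n+i})\bigr)$, and applying Lemma~\ref{lem36} term by term, and your reindexing $b_0=m$, $b_a=-j_a-1$ with the sign $(-1)^{m+1}(-1)^{m-r}=-(-1)^r$ reproduces (\ref{eq35}) exactly, the $-1/z$ term giving the $r=0$ summand. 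The points you spell out beyond the paper (pulling the insertion $[P]_{z^{b_0}}$ through the splitting of diagram (\ref{diag3.3}) via the projection formula, and the effective finiteness of the sums over $r$ and $\overline{b}$) are left implicit there but are consistent with its argument.
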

 
Now we show the main theorem. Note that since we use the (abuse of) notation (\ref{notation:pushforward}), the following result is in fact an equality of {\em cycle classes}, not scalars.
\begin{thm}[Wall-crossing formula]\label{thm:wallcrossing}
	\[
\begin{split}
	\int_{[Q^{\epsilon_{-}}_{g,n}([E^{ss}/G],\beta)]^{\text{vir}}}&1-\int_{[Q^{\epsilon_{+}}_{g,n}([E^{ss}/G],\beta)]^{\text{vir}}}1\\
	&=\sum_{k\geq 1}\sum_{\overline{\beta}}\dfrac{1}{k!}\int_{[Q^{\epsilon_{+}}_{g,n+k}([E^{ss}/G],\overline{\beta})]^{\text{vir}}}\prod_{i=1}^{k}\mathrm{ev}^*_{n+i}[(z-\psi_{n+i})I_{\tilde{\beta}_i}(z-\psi_{n+i})]_0	
\end{split}
	\]
where $\overrightarrow{\beta}=(\overline{\beta},\beta_1,\cdots,\beta_k)$ runs through all the $(k+1)$-tuples of effective curve classes such that $\beta=\overline{\beta}+\beta_1+\cdots+\beta_k$ and $\text{deg}(\beta_i)=(0,d_0)$ for $i=1,\cdots,k$.
\end{thm}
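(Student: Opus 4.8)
The plan is to apply virtual $\C^*$-localization to the master space $MQ^{\epsilon_0}_{g,n}([E^{ss}/G],\beta)$ and extract the residue at $z=0$, following \cite{YZ}. We argue in the range $2g-2+n+\epsilon_0 d>0$, under which the fixed-point decomposition of Section~\ref{sec:localization} holds; the remaining low-genus/low-degree cases are either vacuous or reduce to the computation of the preceding subsection.

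First I would start from the pushed-forward localization identity (\ref{eq31}). Its right-hand side $\tau_*[MQ^{\epsilon_0}_{g,n}([E^{ss}/G],\beta)]^{\text{vir}}$ lies in $A_*(Q^{0+}_{g,n}(\tilde\P,(d',d'')))\otimes_\Q\Q[z]$, so $\mathrm{Res}_{z=0}$ of the left-hand side vanishes, and it suffices to sum the residues of the individual fixed-point contributions. By Section~\ref{fixcomp1}, $F^+\cong\tilde Q^{\epsilon_+}_{g,n}([E^{ss}/G],\beta)$ and its virtual normal bundle is the calibration line bundle with $\C^*$-weight $1$, so $e_{\C^*}(N^{\text{vir}}_{F^+})=z+\alpha$ with $\alpha=c_1$ of the calibration bundle (acting nilpotently on the Chow groups of the proper stack $F^+$), whence $\mathrm{Res}_{z=0}\big([F^+]^{\text{vir}}/(z+\alpha)\big)=[F^+]^{\text{vir}}$; pushing forward and using Lemma~\ref{lem21}, this contributes $\int_{[Q^{\epsilon_+}_{g,n}([E^{ss}/G],\beta)]^{\text{vir}}}1$. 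By Section~\ref{fixcomp2}, $F^-\cong Q^{\epsilon_-}_{g,n}([E^{ss}/G],\beta)$ with virtual normal bundle the dual calibration bundle of $\C^*$-weight $-1$, so the residue of its contribution is $-\int_{[Q^{\epsilon_-}_{g,n}([E^{ss}/G],\beta)]^{\text{vir}}}1$. Combining these with $\mathrm{Res}_{z=0}=0$ for the whole sum yields
\[
\int_{[Q^{\epsilon_-}_{g,n}([E^{ss}/G],\beta)]^{\text{vir}}}1-\int_{[Q^{\epsilon_+}_{g,n}([E^{ss}/G],\beta)]^{\text{vir}}}1=\sum_{k\ge1}\big(\text{the expression }(\ref{eq35})\big),
\]
the right-hand side being the sum over all $F_{\overrightarrow\beta}$ of the residue contributions computed in Corollary~\ref{cor37}.

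It then remains to evaluate this last sum, and this is the main step. I would perform the combinatorial resummation of \cite{YZ}: organize (\ref{eq35}) over $k\ge1$ and over its internal data (the integer $r$, the splitting $\overline\beta'=(\overline\beta'',\overline\beta'_1,\dots,\overline\beta'_r)$, and the index tuple $\overline b=(b_0,\dots,b_r)$), group the $k$ entangled-tail insertions together with the $r$ boundary-divisor insertions $\mathcal{D}'_i$, and observe that the constraints $b_0+\dots+b_r=0$, $b_1,\dots,b_r<0$ together with the signs $(-1)^r/r!$ make the alternating sum over the $\mathcal{D}'$-insertions telescope, so that at each of the $k$ markings only the degree-zero part $[\mathrm{ev}^*_{n+i}((\mathfrak{r}_iz-\psi_{n+i})I_{\tilde\beta_i}(\mathfrak{r}_iz-\psi_{n+i}))]_0$ survives, with overall symmetry factor $1/k!$ and the correct sign. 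Finally, Lemma~\ref{lem21} and the projection formula replace $\tilde Q^{\epsilon_+}_{g,n+k}([E^{ss}/G],\overline\beta)$ by $Q^{\epsilon_+}_{g,n+k}([E^{ss}/G],\overline\beta)$ in each summand, and the identity $\mathfrak{r}_i\tilde\psi_{n+i}=\psi_{n+i}$ together with the substitution $z\mapsto z/\mathfrak{r}_i$ (which leaves the degree-zero coefficient unchanged) turns $[(\mathfrak{r}_iz-\psi_{n+i})I_{\tilde\beta_i}(\mathfrak{r}_iz-\psi_{n+i})]_0$ into $[(z-\psi_{n+i})I_{\tilde\beta_i}(z-\psi_{n+i})]_0$, identifying the right-hand side above with
\[
\sum_{k\ge1}\sum_{\overline\beta}\frac1{k!}\int_{[Q^{\epsilon_+}_{g,n+k}([E^{ss}/G],\overline\beta)]^{\text{vir}}}\prod_{i=1}^k\mathrm{ev}^*_{n+i}\big[(z-\psi_{n+i})I_{\tilde\beta_i}(z-\psi_{n+i})\big]_0,
\]
which is the asserted formula.

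The hard part is exactly this last resummation: tracking the $I$-function insertions and the boundary corrections of Corollary~\ref{cor37} through the telescoping is delicate, but it is a faithful transcription of the corresponding argument in \cite{YZ}, with the base variety $Y$ carried inertly through all the fiber products over $[W/S]$ (via Lemma~\ref{lem35} and the identifications (\ref{eq18})--(\ref{eq20})), so no genuinely new input is needed beyond what has already been set up.
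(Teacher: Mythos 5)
Your proposal follows the paper's proof essentially step for step: start from the pushed-forward localization identity \eqn{eq31}, use that the right side is polynomial in $z$ so the residue at $z=0$ of the left side vanishes, identify the residues from $F^\pm$ via the calibration-bundle description of the virtual normal bundle (with Lemma~\ref{lem21} converting $\tilde Q^{\epsilon_+}$ to $Q^{\epsilon_+}$), and then carry out the combinatorial resummation of the $F_{\overrightarrow\beta}$ contributions from Corollary~\ref{cor37} (the paper does this by symmetrizing the last $k$ markings and summing over subsets $N\subsetneq\{1,\dots,k\}$, where the inclusion–exclusion $\sum_N(-1)^{\#N}$ forces all $b_i=0$ — what you loosely call ``telescoping''), followed by the rescaling $\mathfrak{r}_iz\mapsto z$. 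The only cosmetic discrepancy is your sign convention for the Euler class of the calibration bundle ($z+\alpha$ rather than the paper's $\alpha-z$ coming from $e_{\C^*}(\C_{\mathrm{std}})=-z$), which flips the signs of the individual residue contributions from $F^+$, $F^-$, and $F_{\overrightarrow\beta}$ simultaneously and therefore leaves the final identity unchanged.
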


\begin{proof}
	The sum of residues at $z=0$ of the left hand side of equation (\ref{eq31}) is zero. Contribution of each fixed term is given by Section \ref{fixcomp1} (and Lemma \ref{lem21}), Section \ref{fixcomp2}, and Corollary \ref{cor37}. Replace $k+r$ by $k$ in equation (\ref{eq35}) and rearrange the coefficient to get
	\[
	\begin{split}
		&\int_{[Q^{\epsilon_{-}}_{g,n}([E^{ss}/G],\beta)]^{\text{vir}}}1-\int_{[Q^{\epsilon_{+}}_{g,n}([E^{ss}/G],\beta)]^{\text{vir}}}1\\
		&-\sum_{k\geq 1}\sum_{\overline{\beta}}\sum_{r=0}^{k-1}\sum_{\overline{b}}\dfrac{(-1)^r}{r!(k-r)!}\int_{[Q^{\epsilon_{+}}_{g,n+k}([E^{ss}/G],\overline{\beta})]^{\text{vir}}}
		\prod_{i=1}^{k}\text{ev}^*_{n+i}[(\mathfrak{r}_iz-\psi_{n+i})I_{\tilde{\beta}_i}(\mathfrak{r}_iz-\psi_{n+i})]_{b_i}=0,
	\end{split}
	\]
	where $\overline{\beta}$ is as above and $\overline{b}=(b_1,\cdots,b_k)$ runs through all $k$-tuples such that $b_1+\cdots+b_k=0$ and $b_{k-r+1},\cdots, b_k<0$. Using the symmetry of the last $k$-markings, we can rewrite the expression as 
		\[
	\begin{split}
		&\int_{[Q^{\epsilon_{-}}_{g,n}([E^{ss}/G],\beta)]^{\text{vir}}}1-\int_{[Q^{\epsilon_{+}}_{g,n}([E^{ss}/G],\beta)]^{\text{vir}}}1=\\
		&\sum_{k\geq 1}\sum_{\overline{\beta}}\sum_{N\subsetneq\{1,\cdots,k\}}\sum_{\overline{b}}\dfrac{(-1)^{\#N}}{k!}\int_{[Q^{\epsilon_{+}}_{g,n+k}([E^{ss}/G],\overline{\beta})]^{\text{vir}}}
		\prod_{i=1}^{k}\text{ev}^*_{n+i}[(\mathfrak{r}_iz-\psi_{n+i})I_{\tilde{\beta}_i}(\mathfrak{r}_iz-\psi_{n+i})]_{b_i}=0.
	\end{split}
	\]
	Here $b_1+\cdots+b_k=0$  with $b_i<0$ for each  $i\in N$. For each fixed $k,\overline{b}$, note that 
	\[
	\sum_N(-1)^{\# N}=
	\begin{cases}
		1, & \text{if } b_i\geq 0 \text{ for all } i = 1,\cdots, k;\\
		0, & \text{otherwise},
	\end{cases}
	\]
	where the sum is over all $N\subsetneq\{1,\cdots,k\}$ such that $b_i<0$ for each $i\in N$. This along with $b_1+\cdots+b_k=0$ forces each $b_i$ that appear in the expression to be zero. Finally applying the change of variables $\mathfrak{r_i}z\mapsto z$ (which does not change the degree-$0$ term) we get the desired result.
\end{proof}

\begin{remark}
    Theorem \ref{thm:wallcrossing} may be extended to include insertions by working with moduli spaces with additional marked points, c.f. \cite[Remark 1.11.2]{YZ}. We leave the precise formulation of this extension to the interested readers.
\end{remark}

\bibliographystyle{alpha} 
\bibliography{myreference}
\end{document}